\newtheorem{thm}{Theorem}[section]
\newtheorem{lem}[thm]{Lemma}
\newtheorem{prop}[thm]{Proposition}
\newtheorem{cor}[thm]{Corollary}
\newtheorem{defn}[thm]{Definition}
\newcommand{\RR}{\mathbb{R}}
\newcommand{\ZZ}{\mathbb{Z}}
\newcommand{\CC}{\mathbb{C}}
\newcommand{\bul}{\bullet}
\newcommand{\G}{\Gamma}
\let\ms\mathscr
\let\bb\mathbb
\let\cl\mathcal
\let\rm\mathrm
\let\fr\mathfrak
\title[Cohomology of Standard Modules on Partial Flag Varieties]{Cohomology of Standard Modules on \\ Partial Flag Varieties}
\author{S. N. Kitchen}
\begin{document}

{\abstract Cohomological induction gives an algebraic method for constructing
	representations of a real reductive Lie group $G$ from irreducible representations of 
	reductive subgroups.  Beilinson-Bernstein localization alternatively gives a geometric
	method for constructing Harish-Chandra modules for $G$ from certain representations 
	of a Cartan subgroup.   
	The duality theorem of Hecht, Mili\v ci\'c, Schmid and Wolf establishes
	 a relationship between modules cohomologically induced from minimal parabolics  and 
	 the cohomology of the $\ms{D}$-modules on the complex flag variety 
	 for $G$ determined by the Beilinson-Bernstein construction.  The main results
	 of this paper give a generalization of the duality theorem to partial flag varieties, 
	 which recovers cohomologically induced modules arising from nonminimal parabolics.
}

 \maketitle

%%%%%%
\section{Introduction}
%%%%%%

	The objective of this paper is to extend the duality theorem of \cite{hmsw} to 
	partial flag varieties.  For $G_{\bb{R}}$ be a real reductive Lie group 
	and $(\fr{g}, K)$ its complex Harish-Chandra pair, the main difference 
	between the geometry of $K$-orbits on the full flag variety of $\fr{g}$ and 
	$K$-orbits on partial flag varieties is that the orbits are not necessarily 
	affinely embedded in the case of partial flag varieties, whereas they are
	for the full flag variety.  The affineness of the embedding of 
	$K$-orbits in the full flag variety of $\fr{g}$ was used in an essential way in 
	\cite{hmsw}.  Motivated by the derived equivariant constructions of \cite{p} and
	\cite{mp}, we define analogous geometric constructions which allow us to 
	prove our main result using derived category techniques to take into account
	the failure of affineness of $K$-orbit embeddings.

%%%%%
\subsection{Main Theorem}
%%%%%
	
	Before stating our main result, we first recall the duality theorem of \cite{hmsw}.
	As above, let $G_\RR$ denote a real reductive Lie group, to which we 
	associate its complex Harish-Chandra pair  $(\fr{g},K)$ and abstract Cartan triple 
	$(\fr{h},\Sigma,\Sigma^+)$.  On the full flag variety $X$ of $\fr{g}$, let $Q$ be a 
	$K$-orbit and $\tau$ an irreducible connection on $Q$.  There is a twisted sheaf 
	of differential operators $\ms{D}_\lambda$ on $X$ for every $\lambda\in\fr{h}^*$.
	The $\ms{D}_\lambda$-modules on $X$ have cohomology groups which are 
	Harish-Chandra modules with infinitesimal character $[\lambda]\in\fr{h}^*/\cl{W}$.
	When $\tau$ and $\lambda$ are compatible, we define the \emph{standard 
	module} on $X$ corresponding to the pair $(\tau, \lambda)$ to be the 
	$\ms{D}_\lambda$-module direct image $i_+\tau$ of $\tau$ along the inclusion 
	$i: Q\to X$.  Recall for $V$ a $(\fr{b}, L)$-module with $\fr{b}\subset \fr{g}$ a Borel subalgebra and $L$ a subgroup of $K$,  we induce $V$ to a $(\fr{g},L)$-module by taking the tensor product $\rm{ind}_{\fr{b},L}^{\fr{g},L}(V) = \cl{U}(\fr{g})\otimes_{\cl{U}(\fr{b})} V$.  Let $T_x$ denote the geometric fiber functor.  
	
	 We state the main theorem of 
	\cite{hmsw} not in its original form as a duality statement, but instead 
	without contragredients so that it takes a form similar to the natural formulation
	of our main result.  
	
	\begin{thm}[\cite{hmsw}, Theorem 4.3] \label{dualitythm}   Let $x\in Q$ be any 
	point, let $B_x$ be its stabilizer in $G$, and let $\fr{b}_x$ be the Lie algebra of $B_x$.  
	 Put $\fr{n}_x=[\fr{b}_x,\fr{b}_x]$ and let $\bar{\fr{n}}_x$ be its opposite in $\fr{g}$.
	 Then for all $p\in \ZZ$, we have
	\begin{equation*}
		\rm{H}^p(X,i_+\tau) \simeq \rm{\bf R}^{d_Q+p}\G_{K,B_x\cap K}(
		\rm{ind}_{\fr{b}_x,B_x\cap K}^{\fr{g}, B_x\cap K}(T_x\tau\otimes \wedge^{\rm{top}}\bar{\fr{n}}_x))
	\end{equation*}
	as $\cl{U}(\fr{g})$-modules with infinitesimal character $[\lambda]$.
	\end{thm}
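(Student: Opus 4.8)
The plan is to compute the derived global sections $\mathbf{R}\G(X,i_+\tau)$ as an object of the bounded derived category of $\cl{U}(\fr{g})$-modules with infinitesimal character $[\lambda]$ — both sides of the asserted identity are the $p$-th cohomology of such objects — and to identify it with the derived Zuckerman expression on the right. Fix $x\in Q$ as in the statement and put $L:=B_x\cap K$, so that $Q\cong K/L$; let $\fr{n}_x$ be the nilradical of $\fr{b}_x$ and $\bar{\fr{n}}_x$ its opposite, so $\fr{g}=\fr{b}_x\oplus\bar{\fr{n}}_x$ as a vector space and $T_xX\cong\fr{g}/\fr{b}_x$, whence $\wedge^{\rm{top}}\bar{\fr{n}}_x\cong\wedge^{\rm{top}}(\fr{g}/\fr{b}_x)=\wedge^{\rm{top}}T_xX$ is the fiber at $x$ of the anticanonical bundle of $X$ and carries a natural $(\fr{b}_x,L)$-structure. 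Compatibility of $\tau$ with $\lambda$ means that the geometric fiber $T_x\tau$ is not merely an $L$-module but a $(\fr{b}_x,L)$-module on which $\fr{h}\cong\fr{b}_x/\fr{n}_x$ acts through $\lambda$ (up to the normalizing $\rho$-shift built into $\ms{D}_\lambda$), and, conversely, $\tau$ is recovered from the $(\fr{b}_x,L)$-module $T_x\tau$ by $K$-homogeneous localization along $Q\cong K/L$. Everything in sight is $K$-equivariant, so all modules are naturally $(\fr{g},K)$-modules, and it suffices to produce the isomorphism in the derived category of $(\fr{g},K)$-modules.

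First I would move the computation off of $X$ and onto the orbit. The distinctive feature of the \emph{full} flag variety used in \cite{hmsw} is that $i\colon Q\hookrightarrow X$ is an \emph{affine} morphism; hence $i_+$ is exact, it sends quasicoherent $\ms{D}_\lambda|_Q$-modules to quasicoherent $\ms{D}_\lambda$-modules, and $\mathbf{R}\G(X,i_+\tau)$ is computed as the derived global sections over $Q$ of $\tau$ tensored with the transfer bimodule $\ms{D}_{X\leftarrow Q}$ (appropriately twisted by $\lambda$). Accounting for the canonical bundles in $\ms{D}_{X\leftarrow Q}$ — essentially the relative dualizing sheaf $\omega_{Q/X}\cong\omega_Q\otimes i^*\omega_X^{-1}$, whose fiber at $x$ is $\wedge^{\rm{top}}(\fr{k}/\fr{l})^*\otimes\wedge^{\rm{top}}\bar{\fr{n}}_x$ (with $\fr{k},\fr{l}$ the Lie algebras of $K,L$) — together with the homological shift coming from integrating along the $d_Q$-dimensional orbit leaves, once the $\wedge^{\rm{top}}(\fr{k}/\fr{l})$-part is absorbed into the normalization of the Zuckerman functor below, exactly the one-dimensional twist $\wedge^{\rm{top}}\bar{\fr{n}}_x$ and a shift by $d_Q=\dim_{\CC}Q$. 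Thus the problem is reduced to the homogeneous space $K/L$ with $T_x\tau\otimes\wedge^{\rm{top}}\bar{\fr{n}}_x$ playing the role of the fiber.

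Next I would recognize the reduced object as a composite of two standard functors. Transverse to $Q$ inside $X$ — along the $\bar{\fr{n}}_x$-direction — the transfer bimodule is generated over $\ms{D}_Q$ by the $\cl{U}(\fr{g})$-action, so its fiber over $x$ is the $(\fr{g},L)$-module $\rm{ind}_{\fr{b}_x,L}^{\fr{g},L}(T_x\tau\otimes\wedge^{\rm{top}}\bar{\fr{n}}_x)=\cl{U}(\fr{g})\otimes_{\cl{U}(\fr{b}_x)}(T_x\tau\otimes\wedge^{\rm{top}}\bar{\fr{n}}_x)$; this is where $\fr{g}=\fr{b}_x\oplus\bar{\fr{n}}_x$ and the PBW filtration enter. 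Passing from this $L$-equivariant datum on the single fiber over $x$ to the derived global sections over the whole orbit $K/L$ is, by construction, the derived Zuckerman functor $\mathbf{R}\G_{K,L}$ from $(\fr{g},L)$-modules to $(\fr{g},K)$-modules. Together these give a natural isomorphism
\[
\mathbf{R}\G(X,i_+\tau)\;\simeq\;\mathbf{R}\G_{K,L}\!\bigl(\rm{ind}_{\fr{b}_x,L}^{\fr{g},L}(T_x\tau\otimes\wedge^{\rm{top}}\bar{\fr{n}}_x)\bigr)[d_Q]
\]
in the derived category of $(\fr{g},K)$-modules; taking $\rm{H}^p$ of both sides and restricting the structure to $\cl{U}(\fr{g})$ gives the stated formula, and the infinitesimal character is $[\lambda]$ because $\fr{z}(\fr{g})$ acts on every $\ms{D}_\lambda$-module through the Harish-Chandra homomorphism evaluated at $\lambda$, a property preserved by $\rm{ind}$ and $\mathbf{R}\G_{K,L}$.

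I expect the middle step — matching $i_+\tau$, $K$-equivariantly along $K/L$, with the algebraically induced module — to be the main obstacle, both the identification itself and the precise accounting of twists and shifts. Concretely it requires a local model for $i_+$ transverse to $Q$: an $L$-stable (formal or analytic) slice through $x$ identified with $\bar{\fr{n}}_x$, over which $\ms{D}_{X\leftarrow Q}$ is trivialized and the resulting $\cl{U}(\fr{g})$-module structure is checked against $\rm{ind}_{\fr{b}_x,L}^{\fr{g},L}$ via PBW — together with a careful bookkeeping of $\omega_X$, $\omega_Q$, $\omega_{Q/X}$ and of the chosen normalization of $\mathbf{R}\G_{K,L}$, so that the twist comes out as $\wedge^{\rm{top}}\bar{\fr{n}}_x$ (rather than its dual or a $\rho$-translate) and the degree offset is exactly $d_Q$. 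Because the displayed isomorphism lives in the derived category from the start, no regularity or dominance hypothesis on $\lambda$ is needed — for singular $\lambda$ the higher $\rm{H}^p(X,i_+\tau)$ are just the higher derived Zuckerman functors on the right — though when $\lambda$ is regular antidominant one can shorten the first step using that $\mathbf{R}\G(X,-)$ is then exact on $\ms{D}_\lambda$-modules. Alternatively, and closer to \cite{hmsw}, one may first prove the contragredient form by Verdier duality on $X$ (which exchanges $i_+$ with $i_!$ up to $\ms{D}_\lambda\leftrightarrow\ms{D}_{-\lambda}$) and then strip the contragredients off using affineness of $i$; the bookkeeping obstacle is the same either way.
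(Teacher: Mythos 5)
Your overall strategy coincides with the paper's: reconstruct the standard module from its geometric fiber at $x$, identify the transverse datum with $\rm{ind}_{\fr{b}_x,L}^{\fr{g},L}(T_x\tau\otimes\wedge^{\rm{top}}\bar{\fr{n}}_x)$ via the transfer bimodule and PBW, and let the orbit direction produce the derived Zuckerman functor; this is exactly how the paper recovers the statement (which it quotes from \cite{hmsw}) as the full-flag special case of Theorem \ref{mainthm}, with $p=\rm{id}$ and $i_+$ exact so the spectral sequence collapses. The genuine gap is at the step you yourself flag as the main obstacle: your claim that passing from the $L$-equivariant fiber datum to derived global sections over $K/L$ ``is, by construction, the derived Zuckerman functor'' is not a construction. $\rm{\bf R}\G_{K,L}$ is defined algebraically (right adjoint to restriction, computed by $\rm{Hom}^\bul(\cl{N}(\fr{k}),R(K)\otimes(\text{ - }))^{(\fr{k},L)}$), and its identification with the geometric globalization along the orbit is precisely the content of the theorem. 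The paper's technical core (\S 3--4) exists to supply exactly this missing piece: the geometric Zuckerman functor $\G^{\rm{geo}}_{K,S_x}=\mu_+^{S_x}\pi^\circ[-d_K]$ and its right-adjointness to $\rm{Res}^K_{S_x}$, the isomorphism $i_+\tau\simeq\G^{\rm{geo}}_{K,S_x}j_{x+}T_x\tau[d_Q]$, the commutations $\G^{\rm{geo}}_{K,S_x}i_+=i_+\G^{\rm{geo}}_{K,S_x}$ and $\rm{\bf R}\G\circ\G^{\rm{geo}}_{K,S_x}=\G^{\rm{equi}}_{K,S_x}\circ\rm{\bf R}\G$ in the equivariant derived categories, together with Pand\v zi\'c's theorem that $\G^{\rm{equi}}$ computes the classical derived Zuckerman functors. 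Without these (or HMSW's substitute, the contragredient/duality argument you mention only in passing), your outline restates the theorem at its crucial point rather than proving it.

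A second, related weakness is the twist bookkeeping. You work with $\ms{D}_{X\leftarrow Q}$ and $\omega_{Q/X}$, whose fiber carries the extra factor $\wedge^{\rm{top}}(\fr{k}/\fr{l})^*$, and you propose to absorb it ``into the normalization of the Zuckerman functor''; that is exactly the kind of unjustified normalization that can silently shift degrees or introduce the $\wedge^{\rm{top}}\fr{o}$-type twist that appears in Zuckerman duality, so it cannot be waved away. In the paper's route this issue never arises, because the standard module is reconstructed from the point rather than from the orbit: $j_{x+}T_x\tau$ contributes $\ms{D}_{X\leftarrow x}=\cl{U}(\fr{g})/\fr{b}_x\cl{U}(\fr{g})\otimes T_x\omega_X^{-1}$ with $T_x\omega_X^{-1}=\wedge^{\rm{top}}\bar{\fr{n}}_x$ on the nose, and the shift $[d_Q]$ enters through the isomorphism $\G^{\rm{geo}}_{K,S_x}i_{x+}T_x\tau[d_Q]\simeq\tau$, not through ``integrating along the $d_Q$-dimensional orbit.'' Making your slice-plus-normalization argument precise would amount to redoing that machinery.
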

\noindent	This theorem shows that the sheaf cohomology
	of standard $K$-equivariant $\ms{D}_\lambda$-modules on the full flag variety 
	$X$ for are isomorphic to cohomologically induced modules; that is,
	modules which are cohomologically induced from Borels.  
	Our main result is the analogous identification of the 
	cohomology of standard $\ms{D}_\lambda$-modules on a partial flag variety 
	$X_\theta$, where $\theta$ is a subset of simple roots, with Harish-Chandra 
	modules cohomologically induced from parabolics of type $\theta$.   
	
Unfortunately, Theorem \ref{dualitythm}.\ fails to generalize immediately to partial 
flag varieties because the direct image functor $i_+$ is not necessarily exact for 
the inclusion of a non-affinely embedded $K$-orbit $Q$ in $X_\theta$.  That is, the 
direct image $i_+\tau$ may be a complex of $\ms{D}_\lambda$ modules
rather than a single sheaf.  Theorem \ref{mainthm}.\ below  is an extension of 
Theorem \ref{dualitythm}.\ which incorporates the possible failure of exactness of 
$i_+$.  Theorem \ref{dualitythm}.\ can be recovered as an immediate corollary.  
Let $X_\theta$ be a partial flag variety for $\fr{g}$ and $p: X\to X_\theta$ 
the natural projection from the full flag variety.    Let $\rho$ be the half-sum 
of roots in $\Sigma^+$, let $\rho_\theta$ be the half-sum of roots in $\Sigma^+$
generated by $\theta$, and define $\rho_n = \rho - \rho_\theta$.  
Our main theorem is then:

\begin{thm}[Main Theorem]\label{mainthm} Let $\ms{D}_\lambda$ be a homogeneous
 tdo on  $X_\theta$ and let $\tau$ be a connection on a $K$-orbit $Q$ compatible 
 with $\lambda+\rho_n$.    For $x\in Q$, let $\fr{p}_x$ be the corresponding parabolic
 in $\fr{g}$, let $\fr{n}_x = [\fr{p}_x,\fr{p}_x]$, and let $S_x$ be the stabilizer of $x$ in $K$.   
 Then there is an isomorphism 
\begin{equation}\label{maindciso}
   \rm{\bf R}\G(X,p^\circ i_+ \tau) \simeq \G^\rm{equi}_{K,S_x}
   (\rm{ind}_{\fr{p}_x,S_x}^{\fr{g},S_x}(T_x\tau\otimes \wedge^{\rm{top}}\bar{\fr{n}}_x))
   [d_Q]
\end{equation}
in $\rm{D^b}(\cl{U}_{[\lambda-\rho_\theta]}, K)$, where $d_Q$ is the dimension of $Q$.  
Upon taking cohomology, there is a convergent spectral sequence 
\begin{equation}\label{specseq}
     \rm{\bf R}^p\G(X,p^\circ \rm{\bf R}^qi_+\tau) \Longrightarrow 
     \rm{\bf R}^{d_Q+p+q}\G_{K,S_x}(\rm{ind}_{\fr{p}_x, S_x}^{\fr{g},S_x}
     (T_x\tau\otimes \wedge^{\rm{top}}\bar{\fr{n}}_x)).     
\end{equation}  
\end{thm}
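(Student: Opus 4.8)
The plan is to transfer the whole problem from the partial flag variety $X_\theta$ to the full flag variety $X$ by $\ms{D}$-module base change along $p$, and then to run a derived-category refinement of the argument of \cite{hmsw} in which the affineness of the $K$-orbit embedding — precisely what fails on $X_\theta$ — is replaced by the equivariant constructions of \cite{p} and \cite{mp}. First set $\tilde Q = p^{-1}(Q)$, with inclusion $\tilde i\colon \tilde Q\hookrightarrow X$ and projection $\bar p\colon \tilde Q\to Q$, so that the square built from $i$, $p$, $\tilde i$, $\bar p$ is Cartesian. Since $p$ is smooth, base change for the transfer functors gives $p^\circ i_+\tau\simeq \tilde i_+\bar p^\circ\tau$, hence $\mathbf{R}\G(X,p^\circ i_+\tau)\simeq\mathbf{R}\G(X,\tilde i_+\bar p^\circ\tau)$. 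The comparison of twisted sheaves of differential operators under $p^\circ$ is where the hypothesis that $\tau$ be compatible with $\lambda+\rho_n$, rather than with $\lambda$, enters: $p^\circ$ carries $\ms{D}_\lambda$-modules on $X_\theta$ to $\ms{D}_{\lambda+\rho_n}$-modules on $X$, the shift by $\rho_n$ being forced by the normalizations of the two homogeneous tdo's together with the relative dualizing sheaf of $p$. Consequently $\mathbf{R}\G(X,p^\circ i_+\tau)$ lands in $\rm{D^b}(\cl{U}_{[(\lambda+\rho_n)-\rho]},K)=\rm{D^b}(\cl{U}_{[\lambda-\rho_\theta]},K)$, which is the asserted infinitesimal character.

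Next, one exploits the geometry of $\tilde Q$. Because $Q\cong K/S_x$, the variety $\tilde Q\cong K\times_{S_x}F_x$ is a $K$-homogeneous fibration over $Q$ whose fiber $F_x=p^{-1}(x)$ is the flag variety of the Levi $\fr{l}_x$ of $\fr{p}_x$, with $\bar p$ the bundle projection. Using the homogeneity of $\ms{D}_\lambda$ and the equivariant derived category formalism of \cite{p}, \cite{mp}, the plan is to factor the functor $M\mapsto\mathbf{R}\G(X,\tilde i_+M)$, on $K$-equivariant twisted $\ms{D}$-modules supported on $\tilde Q$, as a ``global sections along the fiber $F_x$'' functor — a Beilinson–Bernstein type global-sections functor for the reductive pair $(\fr{g},S_x)$ — followed by the derived equivariant descent along $K/S_x$, which is exactly the derived Zuckerman functor $\G^{\rm{equi}}_{K,S_x}$ occurring on the right of \eqref{maindciso}. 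This is the technical heart of the argument, and it is here that one cannot avoid the derived category: neither $\tilde i$ nor even the fiber inclusion $F_x\hookrightarrow X$ need be affinely embedded, so there is no single-sheaf statement available at this stage.

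Applied to $M=\tilde i_+\bar p^\circ\tau$: restricted to the fiber $F_x$ over $x$, the connection $\bar p^\circ\tau$ is $\ms{O}_{F_x}$, in a suitable twist, tensored with the geometric fiber $T_x\tau$; forming its $\ms{D}_X$-module direct image along $F_x\hookrightarrow X$ and taking global sections, with $\fr{g}$ acting through the moment map $\fr{g}\to\G(X,\ms{D}_{\lambda+\rho_n})$, produces the $(\fr{g},S_x)$-module $\rm{ind}_{\fr{p}_x,S_x}^{\fr{g},S_x}(T_x\tau\otimes\wedge^{\rm{top}}\bar{\fr{n}}_x)$, and for the relevant twist this computation is concentrated in degree zero. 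This is the parabolic, generalized-Verma analogue of the classical fact that the global sections of the $\ms{D}$-module along a point of the full flag variety is a Verma module up to a $\rho$-shift, the factor $\wedge^{\rm{top}}\bar{\fr{n}}_x$ recording the discrepancy between the geometric fiber and the honest $\fr{p}_x$-module structure. The residual shift $[d_Q]$, $d_Q=\dim Q$, is the offset built into the equivariant descent from $S_x$ up to $K$ — equivalently, into the normalization relating the $\ms{D}$-module direct image from the orbit $Q$ to the restriction to a fiber of $p$ — and is the same shift as in Theorem \ref{dualitythm}. Assembling these three inputs yields the isomorphism \eqref{maindciso} in $\rm{D^b}(\cl{U}_{[\lambda-\rho_\theta]},K)$.

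Finally, the spectral sequence \eqref{specseq} is the hypercohomology spectral sequence for $\mathbf{R}\G(X,-)$ applied to the complex $p^\circ i_+\tau$, namely $\mathbf{R}^p\G(X,\cl{H}^q(p^\circ i_+\tau))\Rightarrow\mathbf{R}^{p+q}\G(X,p^\circ i_+\tau)$, in which $\cl{H}^q(p^\circ i_+\tau)\simeq p^\circ\mathbf{R}^q i_+\tau$ by the exactness of $p^\circ$ and the abutment is rewritten through \eqref{maindciso}; convergence is automatic since $X$ is projective and all complexes are bounded. When $\theta=\emptyset$ one has $X_\theta=X$, $p=\rm{id}$, $\rho_\theta=0$, $\rho_n=\rho$, $\fr{p}_x=\fr{b}_x$, $S_x=B_x\cap K$, and $i_+$ is exact, so the spectral sequence collapses and \eqref{maindciso} specializes to Theorem \ref{dualitythm}. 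The principal obstacle is the second step together with the bookkeeping of the third: establishing, inside the $K$-equivariant derived category and in a way that tolerates the non-affine embeddings, the factorization $\mathbf{R}\G(X,\tilde i_+(-))\simeq\G^{\rm{equi}}_{K,S_x}\circ(\text{fiberwise global sections})$, and then tracking the $\rho$- and $\rho_\theta$-shifts, the twist by $\wedge^{\rm{top}}\bar{\fr{n}}_x$, and the degree shift $[d_Q]$ precisely enough that the two sides match on the nose.
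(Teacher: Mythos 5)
Your outline follows the same overall strategy as the paper — reconstruct the standard module from its data at the point $x$, commute derived global sections past an equivariant descent functor identified with $\G^{\rm{equi}}_{K,S_x}$, identify the resulting $(\fr{g},S_x)$-module as $\rm{ind}_{\fr{p}_x,S_x}^{\fr{g},S_x}(T_x\tau\otimes\wedge^{\rm{top}}\bar{\fr{n}}_x)$, and obtain (\ref{specseq}) as a hypercohomology/Grothendieck spectral sequence (your rewriting of the abutment also tacitly uses Pand\v zi\'c's identity $\rm{H}^p\circ\G^{\rm{equi}}_{K,S_x}=\rm{\bf R}^p\G_{K,S_x}$, which the paper records as (\ref{equal})). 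The packaging differs in the middle: you base-change everything to $X$ first, writing $p^\circ i_+\tau\simeq \tilde i_+\bar p^\circ\tau$ with $\tilde Q=p^{-1}(Q)\cong K\times_{S_x}F_x$, and then want a factorization of $\rm{\bf R}\G(X,\tilde i_+(-))$ through ``fiberwise global sections over $F_x$'' followed by $\G^{\rm{equi}}_{K,S_x}$; the paper instead stays on $X_\theta$, constructs the geometric Zuckerman functor $\G^{\rm{geo}}_{K,S_x}=\mu_+^{S_x}\pi^\circ[-d_K]$, proves the reconstruction $i_+\tau\simeq\G^{\rm{geo}}_{K,S_x}j_{x+}T_x\tau[d_Q]$ (this is where $[d_Q]=d_K-d_{S_x}$ actually arises) together with its commutation with $i_+$, with $p^\circ$, and with $\rm{\bf R}\G$, and then disposes of the passage from $X_\theta$ to $X$ in one stroke via the Embedding Theorem $\rm{\bf R}\G\circ p^\circ=p^*\circ\rm{\bf R}\G$, after which the $\cl{U}(\fr{g})$-module is read off from $j_{x+}T_x\tau=j_{x*}(\ms{D}_{X_\theta\leftarrow x}\otimes T_x\tau)$ and $T_x\omega_{X_\theta}^{-1}=\wedge^{\rm{top}}\bar{\fr{n}}_x$.

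The genuine issue is that the step you yourself label the ``technical heart'' — the factorization $\rm{\bf R}\G(X,\tilde i_+(-))\simeq\G^{\rm{equi}}_{K,S_x}\circ(\text{fiberwise global sections})$ in the $K$-equivariant derived category, together with the degree-zero concentration of the fiberwise computation and the provenance of the shift $[d_Q]$ — is postulated rather than proved, and it is not a routine verification: it is exactly the content of the paper's Section 4 (equivariant Harish-Chandra sheaves, the functor $\rm{Ind_h}$ and $\cl{K}$-injectives, the reduction principle, and the three commutation properties of $\G^{\rm{geo}}_{K,S_x}$), plus the Embedding Theorem, whose proof is the Borel--Weil--Bott-type vanishing $\G(F,\omega_F\otimes\wedge^k\ms{T}_F)=0$ for $k\neq n$ that your ``for the relevant twist this is concentrated in degree zero'' silently requires. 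So the plan is sound and matches the paper in essence, but as written it defers precisely the parts that carry the proof's difficulty; to complete it along your route you would still need to build the equivariant-derived-category descent functor (or borrow $\G^{\rm{geo}}_{K,S_x}$), verify its adjunction/commutation properties at the level of complexes with the homotopies $i_\xi$, and justify the fiberwise vanishing that pins down both the induced module and the infinitesimal character $[\lambda-\rho_\theta]$.
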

\noindent In this theorem, the category $\rm{D^b}(\cl{U}_{\chi},K)$ is the equivariant bounded derived category of Harish-Chandra modules with infinitesimal character $\chi$ and $\G^{\rm{equi}}_{K,S}$ is the equivariant Zuckerman functor introduced in \S 3.  

The spectral sequence (\ref{specseq}) collapses in special cases, such as when $X_\theta$ is
the full flag variety, but in general Theorem \ref{mainthm}.\ is 
the closest we get to a direct generalization of Theorem \ref{dualitythm}.  
 However, for applications to composition series computations in the Grothendieck 
 group, the convergence of (\ref{specseq}) is sufficient.  
 
 The idea behind the proof of Theorem \ref{mainthm}.\ is that the standard sheaf
 $i_+\tau$ is determined entirely by the geometric fiber $T_x\tau$ at a point $x\in Q$.
 We make this precise by constructing an essential inverse to the functor $T_x$.  
 In this construction we introduce the geometric Zuckerman functor 
 $\G^\rm{geo}_{K,S}$.   The isomorphism (\ref{maindciso}) follows from 
 the commutivity properties of $\G^\rm{geo}_{K,S}$, together with 
 Theorem \ref{embthmstatement}.\ below, which allows us to identify the $\cl{U}(\fr{g})$-module
 structure on the sheaf cohomology in Theorem \ref{mainthm}.  
 
 \begin{thm}[Embedding Theorem]\label{embthmstatement}
	 The inverse image functor
$		p^\circ: \cl{M}(\ms{D}_\lambda)\to \cl{M}(\ms{D}_{\lambda}^p)$
	is fully faithful for all $\lambda$, and for $\lambda$ anti-dominant, we have
	$\G\circ p^\circ = p^*\circ\G$, where $p^*: \cl{M}(\G(\ms{D}_\lambda)) \to
	\cl{M}(\G(\ms{D}_\lambda^p))$ is the usual pull-back of modules induced by
	the natural map $\G(\ms{D}_\lambda^p)\to \G(\ms{D}_\lambda)$.
\end{thm}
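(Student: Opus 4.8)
The plan is to reduce everything to the fact that the fibres of $p\colon X\to X_\theta$ are flag varieties of a Levi subgroup, hence connected and with vanishing higher cohomology of the structure sheaf, so that $Rp_*\ms{O}_X=\ms{O}_{X_\theta}$. Two preliminary observations: since $p$ is a submersion, $p^\circ\colon\cl{M}(\ms{D}_\lambda)\to\cl{M}(\ms{D}_\lambda^p)$ is exact and on underlying sheaves is the flat pull-back $p^*$; and the restriction of $\ms{D}_\lambda^p$ to a fibre of $p$ is the untwisted sheaf of differential operators of that fibre, so $p^\circ M$ carries a trivial relative connection along the fibres of $p$. The homogeneity hypothesis on $\ms{D}_\lambda$ is used only to ensure that $\ms{D}_\lambda^p$ is again a tdo equipped with a $G$-equivariant structure.

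For full faithfulness I would exhibit a right adjoint to $p^\circ$. For a submersion, the relative horizontal sections of a $\ms{D}_\lambda^p$-module $N$ along the fibres push forward to a $\ms{D}_\lambda$-module $p_\circ N:=p_*\bigl(N^{\nabla_{X/X_\theta}}\bigr)$, and the usual adjunction between $p^*$ and $p_*$ of $\ms{O}$-modules upgrades to an adjunction $p^\circ\dashv p_\circ$. The unit $M\to p_\circ p^\circ M$ is an isomorphism: the relative horizontal sections of $p^\circ M$ are constant along the connected fibres of $p$ and recover $M$ upon pushing forward. Since $p^\circ$ is exact and its unit is an isomorphism, $p^\circ$ is fully faithful. (Equivalently, its essential image is the full subcategory of $\ms{D}_\lambda^p$-modules with trivial relative connection, on which $p^\circ$ and $p_\circ$ are mutually inverse.)

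For the statement on global sections, the projection formula for quasi-coherent sheaves together with $p_*\ms{O}_X=\ms{O}_{X_\theta}$ gives, on underlying sheaves,
\begin{equation*}
  \G(X,p^\circ M)=\G\bigl(X_\theta,p_*p^*M\bigr)=\G\bigl(X_\theta,M\otimes_{\ms{O}_{X_\theta}}p_*\ms{O}_X\bigr)=\G(X_\theta,M),
\end{equation*}
naturally in $M$, and since $\fr{g}$ acts compatibly through vector fields this is an isomorphism of $\cl{U}(\fr{g})$-modules. Taking $M=\ms{D}_\lambda$ exhibits $\G(\ms{D}_\lambda)$ as a $\bigl(\G(\ms{D}_\lambda^p),\G(\ms{D}_\lambda)\bigr)$-bimodule that is free of rank one on the right, hence yields a ring homomorphism $\G(\ms{D}_\lambda^p)\to\G(\ms{D}_\lambda)$---the natural map in the statement---for which the displayed isomorphism is equivariant, that is, $\G\circ p^\circ=p^*\circ\G$. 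The anti-dominance of $\lambda$ is invoked here: it lets one apply the Beilinson--Bernstein theorem on $X_\theta$ (and on $X$) to conclude that $\G$ is exact and that $\G(\ms{D}_\lambda)$ and $\G(\ms{D}_\lambda^p)$ are both the quotient of $\cl{U}(\fr{g})$ of infinitesimal character $[\lambda-\rho_\theta]$ occurring in the Main Theorem---on a partial flag variety the global sections of a tdo can fail to be that full quotient when $\lambda$ is not anti-dominant---so that both composites genuinely land in $\cl{M}\bigl(\G(\ms{D}_\lambda^p)\bigr)$ and the identification of modules becomes an equality of functors.

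The one essential input is the geometry of the fibration, namely $Rp_*\ms{O}_X=\ms{O}_{X_\theta}$; everything else is naturality. I expect the real obstacle to be bookkeeping rather than mathematics: making the tdo $\ms{D}_\lambda^p$ on $X$ and its relation to the twisted differential operators on the full flag variety fully explicit (this is where the shift by $\rho_\theta$ lives), checking that the pull-back and projection-formula isomorphisms respect the twisted differential operator structure and not merely the $\ms{O}$-module structure, and verifying that the ring homomorphism $\G(\ms{D}_\lambda^p)\to\G(\ms{D}_\lambda)$ produced geometrically coincides with the one induced by the $\fr{g}$-action, so that the module pull-back $p^*$ of the statement is literally restriction of scalars along it.
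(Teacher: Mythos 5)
Your proposal is correct and has the same skeleton as the paper's proof: both establish full faithfulness by showing that the unit of the adjunction between $p^\circ$ and the (shifted) direct image along $p$ is an isomorphism, and both obtain $\G\circ p^\circ=p^*\circ\G$ from the resulting identification $\ms{V}\simeq p_*p^\circ\ms{V}$. The difference is in how the unit is controlled. You stay at the abelian level, taking as right adjoint $p_\circ=p_*\bigl((\text{ - })^{\nabla_{X/X_\theta}}\bigr)$, i.e.\ $\rm{H}^0(p_+[-n])$, and your only geometric input is $\rm{\bf R}p_*\ms{O}_X=\ms{O}_{X_\theta}$; note that for a general quasi-coherent $\ms{V}$ the unit computation is really the projection formula $p_*p^\circ\ms{V}\simeq\ms{V}$ rather than connectedness of fibres alone, but you have exactly that formula in hand. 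The paper instead computes $p_+p^\circ\ms{V}[-n]$ through the relative de Rham resolution $\omega_{X/X_\theta}\otimes\cl{T}^\bul_{X/X_\theta}$, identifies $\omega_{X/X_\theta}=\ms{O}(2\rho_\theta)$ and $\cl{T}^{-n}_{X/X_\theta}=\ms{O}(-2\rho_\theta)$, and disposes of the remaining terms by Borel--Weil--Bott on the fibres; this aims at the stronger derived statement $p_+p^\circ[-n]\simeq\mathrm{id}$, which is what gets reused for the $K$-equivariant corollary, and it identifies the ring actions by local triviality ($\ms{D}_\lambda^p=\ms{D}_F\boxtimes\ms{D}_\lambda$ acting factor-wise) rather than by your bimodule argument. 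One caution: your explanation of the anti-dominance hypothesis does not match the paper and is slightly dangerous. The paper never identifies $\G(X_\theta,\ms{D}_\lambda)$ with $\cl{U}_{[\lambda-\rho_\theta]}$ --- it explicitly warns that global sections of a tdo on a partial flag variety need not be a quotient of $\cl{U}(\fr{g})$ --- and, in this paper's logical order, exactness of $\G$ on $X_\theta$ and $\ms{D}_\lambda$-affineness are \emph{corollaries} of the Embedding Theorem, so invoking Beilinson--Bernstein on $X_\theta$ here would be circular. Fortunately your core argument never needs it: the natural isomorphism $\G(X,p^\circ\ms{V})\simeq\G(X_\theta,\ms{V})$ and its compatibility with the map $\G(\ms{D}_\lambda^p)\to\G(\ms{D}_\lambda)$ hold without anti-dominance, which only becomes relevant (through the full flag variety, where $\lambda-\rho_\theta$ is anti-dominant) when one wants exactness of $\G$ and the underived statement to control the derived one.
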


%%%%%%
\subsection{Contents of Paper}
%%%%%%
In $\S 2$-$3$, we review twisted differential operators on homogeneous spaces and the construction of the equivariant  Zuckerman functor $\G^{\rm{equi}}_{K,S}$ of \cite{p}.
The functor $\G^{\rm{equi}}_{K,S}$ is the generalization of the usual derived 
Zuckerman functor to categories of derived equivariant complexes.  In \cite{p}, 
Pand\v zi\'c  proves that by taking cohomology of $\G^{\rm{equi}}_{K,S}$ we recover 
the usual Zuckerman functors. That is, for all $p$ we have
 \begin{equation}\label{equal}
    \rm{H}^p(\G^{\rm{equi}}_{K, T} V^\bul) = \rm{\bf R}^p\G_{K,T}(V^\bul).
  \end{equation}

Section 4 is the technical heart of the paper where we introduce the derived 
equivariant category of Harish-Chandra sheaves, define the \emph{geometric 
Zuckerman functor} $\G^\rm{geo}_{K,S}$ (which is the localization of 
$\G^{\rm{equi}}_{K,S}$), and prove that $\G^\rm{geo}_{K,S}$ has sundry properties 
that will be used in the proof of the Theorem \ref{mainthm}.  
In the final section, we prove Theorems \ref{mainthm}.\ and \ref{embthmstatement}.\ and 
end the paper with a brief reformulation of Theorem \ref{mainthm}.\ as a duality statement.

  %%%%%%
  \subsection{Acknowledgements}
  %%%%%%
  
  I would first like to thank my advisor Dragan Mili\v ci\'c, without whom none of this would have been possible, and Pavle Pand\v zi\'c, whose thesis provided the groundwork for mine.  Special thanks go to Wolfgang Soergel and Andrew Snowden for all of their useful comments.   I would also like to thank the members of the University of Utah math department for their support during my graduate student years and express my gratitude to the NSF, as I was fortunate enough to have been supported by the VIGRE grant for several semesters.     
    
 %%%%%%%%%%%%%%%%%%%%%%%%%%%%%%%%%%%%%%%%%%%%%
 %%%%%%%%%%%%%%%%%%%%%%%%%%%%%%%%%%%%%%%%%%%%%

 %%%%%%%
 \section{Twisted Sheaves of Differential Operators}
 %%%%%%%
 
 In this section, we introduce our notation for the direct and inverse image of
  $\ms{D}$-modules, where $\ms{D}$ is a twisted sheaf of differential operators.  Additionally,
  we give classification results for homogeneous sheaves of twisted differential operators 
  on generalized flag varieties.   We learned much of the material from Mili\v ci\'c's unpublished notes \cite{d-dmods} and \cite{d-book}. 
  
  \subsection{Definitions}\label{tdos}
  
  We will always use $\ms{D}_X$ to denote the sheaf of differential operators on 
  a smooth complex algebraic variety $X$ and more generally $\ms{D}$ for a 
  twisted sheaf of differential operators (tdo); that is, a sheaf of $\ms{O}_X$-algebras
  locally isomorphic to $\ms{D}_X$.  Let $\cl{M}(\ms{D})$ denote the category of 
  left $\ms{D}$-modules and $\rm{D^b}(\ms{D})$ the corresponding bounded 
  derived category.  For right $\ms{D}$-modules
   we write $\cl{M}(\ms{D})_\rm{r}$ and $\rm{D^b}(\ms{D})_\rm{r}$, respectively.  
 
  Fix a smooth map $f: Y\to X$ between smooth varieties and define 
  $\ms{D}^f$ to be the sheaf of   differential endomorphisms
  of the left $\ms{O}_Y$-, right $f^{-1}\ms{D}$-module 
  $\ms{D}_{Y\to X}=f^*\ms{D}$. 
   This sheaf of operators $\ms{D}^f$ is itself a tdo on $Y$. 
  In the trivial example, we have $\ms{D}=\ms{D}_X$ and
   $\ms{D}^f=\ms{D}_Y$ for any $f$.  For maps $f:X\to Y$ and $g: Y\to Z$ and a tdo
   $\ms{D}$ on $Z$, we have
	$(\ms{D}^g)^f \simeq \ms{D}^{g\circ f}$.

%%%%%%%%   
\subsection{Inverse Image}
%%%%%%%%
%%%%%%

Let $f: Y\to X$ and $\ms{D}$ be as in the above section.  We denote the inverse image functor from 
$ \cl{M}(\ms{D})$ to $\cl{M}(\ms{D}^f)$ by $f^\circ$.  It is  defined as
\begin{equation*}
   f^\circ(\text{ - }) := \ms{D}_{Y\to X} \otimes_{f^{-1}\ms{D}}f^{-1}(\text{ - }).
\end{equation*}
Here $f^{-1}$ is the usual sheaf inverse image.  
The functor $f^\circ$ is right exact, exact when $f$ is flat, and 
     has finite left cohomological dimension. 
 
The category $\cl{M}(\ms{D})$ has enough projectives, 
and so the derived inverse image functor
\begin{equation*}
	\rm{\bf L}f^\circ: \rm{D^b}(\ms{D})\to \rm{D^b}(\ms{D}^f)
\end{equation*}
exists.   In \cite{b},  Borel defines the functor
\begin{equation*}
   f^!:= \rm{\bf L}f^\circ[d_{Y/X}]: \rm{D^b}(\ms{D})\to \rm{D^b}(\ms{D}^f),
\end{equation*}
 where  $d_{Y/X}= \dim{Y}-\dim{X}$.  Introducing the shift by $d_{Y/X}$ guarantees the 
 functor $f^!$ behaves well with respect to Verdier duality.

%%%%%%%
\subsection{Direct Image}
%%%%%%%

 Again let $f: Y\to X$ be as in \S \ref{tdos} and let $\ms{D}$ be a tdo on $X$.  We will
 define the direct image functor $f_+$, then examine this functor for $f$ a surjective submersion.
 The opposite sheaf $\ms{D}^\circ$ of any tdo $\ms{D}$
is again a tdo \cite[Prop. 11]{d-book}.  
 There is an \emph{isomorphism} of categories 
$	\cl{M}(\ms{D}^\circ)_{\rm{r}} \simeq  \cl{M}(\ms{D})$, which is the identity on objects.
Let $\omega_{Y/X}$ denote the relative canonical bundle for $f$.
 
 \begin{defn}
	Up to conjugation by the isomorphism $\cl{M}(\ms{D})\simeq \cl{M}
	(\ms{D}^\circ)_\rm{r}$, the direct image functor 
	$f_+: \rm{D}^b(\ms{D}^f) \to \rm{D}^b(\ms{D})$ is defined by 
	\begin{equation*}
		f_+(\text{ - }) = \rm{\bf R}f_*(\text{ - }\otimes\omega_{Y/X}\otimes_{(\ms{D}^\circ)^f}^{\rm{\bf L}}
		\ms{D}_{Y\to X}).
	\end{equation*}
\end{defn}

 \noindent This definition is the translation to left $\ms{D}$-modules of the usual construction: 
\begin{equation*}f_+: \rm{D^b}(\ms{D}^f)_{\rm{r}} \to \rm{D^b}(\ms{D})_\rm{r},
\quad   f_+(\text{ - }) = \rm{\bf R}f_*( \text{ - } \otimes^{\rm{\bf L}}_{\ms{D}^f} \ms{D}_{Y\to X}).
\end{equation*}

In general, the direct image $f_+$ is neither right nor left exact.   However, if $f$ is an affine morphism, then $f_*$ is exact and
       thus $f_+$ is right exact.  If $\ms{D}_{Y\to X}$ is a flat $\ms{D}^f$-module,
      such as when $f$ is an immersion, then the tensor product is exact, 
      so $f_+$ is left exact.  
      Putting these two special cases together we find that if $f$ an affine immersion,  then $f_+$ is exact.  
    Moreover, if $f$ is a closed immersion, then $f^!$ is the right adjoint to $f_+$.

          Let $f$ be a surjective submersion.   In this case,
          there is a locally free left $\ms{D}^f$-, right $f^{-1}\ms{O}_X$-module resolution 
          $\cl{T}_{Y/X}^\bul(\ms{D}^f)$ of $\ms{D}_{Y\to X}$ given by 
          \begin{equation*}
                \cl{T}_{Y/X}^{-k}(\ms{D}^f) = \ms{D}^f
                \otimes_{\ms{O}_Y}\wedge^k\ms{T}_{Y/X},\quad k\in\bb{\ZZ},
          \end{equation*}
   with the usual de Rham differential.  Here $\ms{T}_{Y/X}:= \Omega_{Y/X}^*$ is the sheaf 
   of local vector fields tangent to the fibers of $f$.  Note $\ms{T}_{Y/X}\subset \ms{D}^f$ since 
   the twist of $\ms{D}^f$ is trivial along these fibers.  
	The direct image with respect to this resolution gives
		\begin{equation*}\begin{array}{rcl}
		f_+(\ms{V}) & = & \rm{\bf R}f_*(\ms{V}\otimes\omega_{Y/X}\otimes_{(\ms{D}^\circ)^f}
		\cl{T}^\bul_{Y/X}(\ms{D}^\circ)^f) \\ 
		& = & \rm{\bf R}f_*(\Omega^\bul_{Y/X}(\ms{D}^f)
		\otimes_{\ms{D}^f}\ms{V})[d_{Y/X}]
		\end{array}
	\end{equation*}
	for all $\ms{V}\in\cl{M}(\ms{D}^f)$, where $\Omega_{Y/X}^\bul(\ms{D}^f)$ is the relative de Rham complex tensored with 
	$\ms{D}^f$.  In this case, it is transparent that $f_+[-d_{Y/X}] $ is the right adjoint of $f^\circ$.

   %%%%%
  \subsection{Homogeneous Twisted Sheaves of Differential Operators}
  %%%%%
  
In this section we classify homogeneous sheaves of twisted differential operators
 on generalized flag varieties.  The content  follows the analogous constructions in 
  \cite{d-book}. The generalized flag varieties are 
 homogeneous spaces $X$ for a complex reductive group $G$.  We consider only tdo's 
 which are equivariant with respect to the $G$-action on $X$; more precisely, 
 we will work exclusively with \emph{homogeneous} twisted sheaves of differential operators.
  \begin{defn} A \emph{homogeneous tdo} on a complex $G$-variety $X$ is a tdo $\ms{D}$  
  with a $G$-equivariant structure $\gamma$ and a morphism of algebras 
  $\alpha: \cl{U}(\fr{g})\to \G(X,\ms{D})$ satisfying:
	\paragraph{(H1)} The group $G$ acts on  $\ms{D}$ by algebra homomorphisms.
	\paragraph{(H2)} The differential of $\gamma$ agrees with the adjoint action --- that is,
		\begin{equation*}
			d\gamma_\xi(T) = [\alpha(\xi),T],\quad\forall\; \xi\in\fr{g},\; T\in\ms{D}.
		\end{equation*}
	\paragraph{(H3)} The map $\alpha$ is $G$-equivariant.  
  \end{defn}

  We now classify homogeneous tdo's  on  a generalized flag variety $X$ 
   of a complex reductive Lie group $G$.  Let $\fr{h}$ be the abstract Cartan for $\fr{g}$
  and let $\theta$ be the subset of simple positive roots corresponding to $X$.
  If $x\in X$ is any point and $\fr{p}_x$ the parabolic determined by $x$, define 
	\begin{equation*}
		\fr{h}_\theta = \fr{p}_x/[\fr{p}_x,\fr{p}_x].
	\end{equation*}  

  \begin{prop}
	The space $\fr{h}_\theta^*$ parameterizes isomorphism classes of homogeneous tdo's 
	on the partial flag variety $X$ of type $\theta$.
  \end{prop}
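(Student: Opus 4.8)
The plan is to classify homogeneous tdo's on $X = G/P_x$ by transporting the problem to the generalized flag variety as a homogeneous space and analyzing the fiber of the tdo at the base point. First I would fix the base point $x \in X$ with stabilizer $P = P_x$, so that $X \cong G/P$, and recall the general principle (following \cite{d-book}) that a $G$-equivariant tdo on a homogeneous space $G/P$ is determined by its "fiber at $x$", which carries an action of $P$ compatible with the adjoint action, together with the algebra map $\alpha: \cl{U}(\fr{g}) \to \G(X,\ms{D})$ prescribed by (H1)--(H3). Concretely, I would pass to the local picture: on the big cell, or after pulling back along $G \to G/P$, a homogeneous tdo acquires a trivialization, and the twisting data is encoded by a character (or rather a Lie algebra homomorphism) measuring how the $P$-action on the would-be trivializing line differs from the canonical one.

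The key steps, in order: (1) Show that for any homogeneous tdo $\ms{D}$, the sheaf $\ms{D}$ pulled back to $G$ becomes $G$-equivariantly isomorphic to $\ms{D}_G$ twisted by a one-dimensional $(\fr{p}, P)$-module; this uses that $G$ is "large" enough and that the obstruction to trivializing a tdo on $G/P$ after pullback vanishes. (2) Identify the twisting module: compatibility with the adjoint action via (H2) forces the differential of the $P$-action on the trivializing line to be a Lie algebra homomorphism $\fr{p} \to \CC$, which necessarily kills $[\fr{p},\fr{p}] = \fr{n}_x$ and also kills the nilradical's complement contributions, hence factors through $\fr{p}/[\fr{p},\fr{p}] = \fr{h}_\theta$. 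So the infinitesimal data is an element of $\fr{h}_\theta^*$. (3) Conversely, given $\lambda \in \fr{h}_\theta^*$, lift it to a character of $\fr{p}$, form the associated twisted sheaf of differential operators on $G/P$ by descent from $G$ (the integrality needed to exponentiate to $P$ is not required since we only need a homogeneous tdo, not a line bundle — the relevant object descends at the level of differential operators), and check it satisfies (H1)--(H3). (4) Verify the assignment $\lambda \mapsto \ms{D}_\lambda$ is a bijection on isomorphism classes by constructing the inverse from step (2) and checking the two composites are the identity; here one uses that an isomorphism of homogeneous tdo's is compatible with the $G$-structure and $\alpha$, so it is rigid and pins down $\lambda$ uniquely.

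The main obstacle I expect is step (2)--(3): pinning down exactly why the twisting parameter lands in $\fr{h}_\theta^* = (\fr{p}/[\fr{p},\fr{p}])^*$ and not in some larger space, and conversely why every such $\lambda$ genuinely arises — i.e., the descent of the twisted differential operators from $G$ to $G/P$ must be carried out carefully because a character of $\fr{p}$ need not exponentiate to a character of $P$, yet the sheaf of twisted differential operators still descends. The resolution is that tdo's, unlike line bundles, are built from the enveloping algebra side and only see $\fr{p}$-level (infinitesimal) data once the $P$-equivariant structure is imposed on the algebra rather than on a line; the half-sum-of-roots shift $\rho_n$ appearing elsewhere in the paper is a symptom of the same phenomenon (the relative canonical bundle $\omega_{G/P}$ contributes a fixed twist). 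I would handle this by working throughout with the sheaf $\ms{D}_\lambda$ defined as $G$-invariant differential operators on the line bundle-like object attached to $\lambda + (\text{fixed shift})$, following the template in \cite{d-book}, and simply verifying functoriality and the bijection, rather than re-deriving the descent from scratch. A secondary, more routine obstacle is checking that two homogeneous tdo's with the same $\lambda$ are isomorphic \emph{as homogeneous tdo's} (not merely as tdo's), which follows because the $G$-equivariant structure together with $\alpha$ leaves no room for automorphisms beyond scalars.
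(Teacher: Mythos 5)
Your plan is sound, but it takes a genuinely different route from the paper. The paper does not reprove the classification: it quotes it as a special case of Theorem 1.2.4 of \cite{d-book} and only records the constructive direction, building $\ms{D}_{X,\lambda}$ intrinsically on $X$ as the quotient $\cl{U}^\circ/\ms{I}_\lambda$, where $\cl{U}^\circ=\ms{O}_X\otimes_\CC\cl{U}(\fr{g})$ and $\ms{I}_\lambda$ is the two-sided ideal generated by $s-\lambda^\circ(s)$ for $s\in\fr{p}^\circ$, with $\lambda^\circ\colon\fr{p}^\circ\to\ms{O}_X$ the $G$-equivariant morphism attached to a $P_x$-invariant linear form $\lambda$ on $\fr{p}_x$; such forms are exactly the elements of $\fr{h}_\theta^*=(\fr{p}_x/[\fr{p}_x,\fr{p}_x])^*$ because $P_x$ is connected. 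You instead pull back along $G\to G/P$ and descend twisted differential operators from the group, in the Beilinson--Bernstein style; both routes reduce the parameter space to $P$-invariant characters of $\fr{p}$, and your observation that no integrality (exponentiation to $P$) is needed is indeed the key point of the descent picture. Two remarks. First, your step (1) --- trivializing the pullback on $G$ --- is where the real work hides if you argue via a general obstruction-vanishing statement; for \emph{homogeneous} tdo's it is cleaner to use homogeneity directly: since the $\fr{g}$-action spans each tangent space of $X$, the image of $\alpha$ together with $\ms{O}_X$ generates, so every homogeneous tdo is a quotient of $\cl{U}^\circ$, and the kernel restricted to $\fr{p}^\circ$ is cut out by a $P_x$-invariant functional; this is precisely the mechanism of the cited proof and yields injectivity and surjectivity of $\lambda\mapsto\ms{D}_{X,\lambda}$ in one stroke, avoiding your steps (1) and (4) almost entirely. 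Second, since the paper's proposition leans on \cite{d-book} for the bijection, your fuller program is more than the paper itself carries out; if you write it up, phrase the uniqueness in step (4) for isomorphisms of homogeneous tdo's (compatible with the equivariant structure $\gamma$ and with $\alpha$), as you already note, since a bare tdo isomorphism would not pin down $\lambda$ by itself.
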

\noindent  This proposition is a special case of \cite[Theorem 1.2.4]{d-book}.
The proof is constructive; for completeness, we outline the construction of the 
homogeneous tdo $\ms{D}_{X,\lambda}$ 
for any $\lambda\in\fr{h}^*_\theta$.    Let $\fr{g}^\circ$ denote the trivial bundle 
$\ms{O}_X\otimes_\CC \fr{g}$. There is a surjection $\fr{g}^\circ\to\ms{T}_X$  with kernel 
$\fr{p}^\circ$, which has  geometric fiber $T_x\fr{p}^\circ = \fr{p}_x$ at $x\in X$,
  where $\fr{p}_x$ is the parabolic corresponding to $x$.  Let $P_x$ be the stabilizer of $x$ in $G$
  so that $P_x$ has $\fr{p}_x$ as its Lie algebra.  Any $\lambda\in \fr{p}_x^*$ which is 
  $P_x$-invariant determines a $G$-equivariant morphism 
  $\lambda^\circ: \fr{p}^\circ\to \ms{O}_X$.   In fact, these morphisms are in bijection with
  $P_x$-invariant linear forms on $\fr{p}_x$.   Define $\cl{U}^\circ :=
  \ms{O}_X\otimes_\CC\cl{U}(\fr{g})$ and the map $\phi_\lambda: \fr{p}^\circ\to \cl{U}^\circ$
   by  $\phi_\lambda( s)= s-\lambda^\circ(s)$ for  $s\in\fr{p}^\circ$.  The image
   of $\phi_\lambda$ generates a two-sided ideal $\ms{I}_\lambda$ in $\cl{U}^\circ$;
   finally, define 
  \begin{equation*}
  	\ms{D}_{X,\lambda} := \cl{U}^\circ/\ms{I}_{\lambda}.  
  \end{equation*} 
    The action of $G$ on $\cl{U}(\fr{g})$ induces an algebraic action on $\ms{D}_{X,\lambda}$,
  and similarly, the surjection $\cl{U}^\circ\to \ms{D}_{X,\lambda}$ determines a morphism
  \begin{equation*}
  	\alpha: \cl{U}(\fr{g})\to \G(X,\ms{D}_{X,\lambda})
  \end{equation*}
  upon taking global sections.  That the $G$-action and $\alpha$ satisfy $(H1)$--$(H3)$ is 
  obvious, and therefore, $\ms{D}_{X,\lambda}$ is a homogeneous twisted 
  sheaf of differential operators.

%%%%
\subsection{The Infinitesimal Character of $\ms{D}_{X,\lambda}$} 
%%%%

In this section we compute the infinitesimal character of $\G(X,\ms{D}_{X,\lambda})$. 
Let $[\lambda]\in \fr{h}^*/\cl{W}$ be the $\cl{W}$-orbit of $\lambda\in\fr{h}^*$.
Recall that when $X$ is the full flag variety, for any $\lambda\in\fr{h}^*$ there is an isomorphism
$\G(X,\ms{D}_{X,\lambda})\simeq \cl{U}_{[\lambda-\rho]}$ and all higher cohomology 
vanishes. 
 Consequently, we define $\ms{D}_\mu:= \ms{D}_{X,\mu+\rho}$ to compensate for  
 the $\rho$-shift in the infinitesimal character of global sections.   
  
Unfortunately, the global sections of $\ms{D}_{X,\lambda}$ for $X$ a partial flag variety 
do not always appear as a quotient of $\cl{U}(\fr{g})$.  However, we can determine
the infinitesimal character without computing global sections explicitly.  
Define 
  \begin{equation*}
  	\ms{D}_{\fr{h}_\theta} = \cl{U}^\circ/ [\fr{p}^\circ,\fr{p}^\circ]\cl{U}^\circ.
  \end{equation*}
  The quotient $\fr{h}_\theta^\circ = \fr{p}^\circ/D\fr{p}^\circ$ is the trivial bundle
  $\fr{h}_\theta^\circ = \ms{O}_X\otimes_\CC \fr{h}_\theta$.  
 For $\lambda\in\fr{h}_\theta^*$, the corresponding morphism 
 $\phi_\lambda: \fr{p}^\circ\to \cl{U}^\circ$ defining  $\ms{D}_{X, \lambda}$
 descends to $\phi_\lambda: \fr{h}_\theta^\circ\to \ms{D}_{\fr{h}_\theta}$.  
 The quotient $\fr{h}\to \fr{h}_\theta$ allows us to extend $\phi_\lambda$ to $\cl{U}(\fr{h})$,
 and then compose with the abstract Harish-Chandra isomorphism to get 
 a map  $\cl{Z}(\fr{g})\to \ms{D}_{\fr{h}_{\theta}}$.   Let $\rho_\theta$  and $\rho_n$
 be defined as in the introduction.   
 Note $\rho_\theta$ vanishes in the projection
  of $\fr{h}^*$ to the subspace $\fr{h}_\theta^*$.  Define
  \begin{equation*}
  	\ms{D}_\lambda = \ms{D}_{X, \lambda+\rho_n}.
  \end{equation*}
  Then, the  global sections of the tdo $\ms{D}_\lambda$
  has infinitesimal character $[\lambda-\rho_\theta]\in\fr{h}^*/\cl{W}$.
  
 We end with results illustrating some relationships between the twisting parameters
 for various homogeneous tdo's. 
  Let $p: X\to X_\theta$ be the projection of the full flag variety $X$ to the partial flag
  variety $X_\theta$ of type $\theta$.  There is then an equality
  $\ms{D}_{X, \lambda}^p = \ms{D}_{X,\lambda}$ and so
  \begin{equation*}\ms{D}_\lambda^p = \ms{D}_{\lambda-\rho_\theta}.\end{equation*}  
  Also, since the opposite tdo appears in the construction of the direct image, we include the 
  following proposition.   
   \begin{prop}
   	Let $\ms{D}_\lambda$ be any homogeneous tdo on the partial flag variety $X_\theta$. 
	Then, \begin{equation*}  \ms{D}_\lambda^\circ = \ms{D}_{-\lambda}.\end{equation*}
	Equivalently, we have $\ms{D}_{X_\theta, \lambda}^\circ = \ms{D}_{X_\theta, -\lambda+2\rho_n}$.  
   \end{prop}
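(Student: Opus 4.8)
The plan is to read off the twisting parameter of $\ms D^\circ_{X_\theta,\lambda}$ directly from the construction of homogeneous tdo's recalled above. Since the opposite of a homogeneous tdo is again homogeneous (this is part of what we are invoking from \cite{d-book}), we automatically have $\ms D^\circ_{X_\theta,\lambda} = \ms D_{X_\theta,\mu}$ for a unique $\mu\in\fr h_\theta^*$, and the entire content of the proposition is the identification $\mu = -\lambda+2\rho_n$. Granting that, the two displayed equalities are equivalent, and the clean form $\ms D_\lambda^\circ = \ms D_{-\lambda}$ falls out by substituting $\ms D_\lambda = \ms D_{X_\theta,\lambda+\rho_n}$; here one uses that $\rho_n$ is fixed by $\cl W_\theta$, because $s_\alpha$ permutes $\Sigma^+\setminus\Sigma_\theta^+$ for every $\alpha\in\theta$ (the Levi normalizes the nilradical). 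This $\cl W_\theta$-invariance is also what guarantees that $-\lambda+2\rho_n$ is a legitimate parameter, i.e.\ that it lies in $\fr h_\theta^*$.

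To compute $\mu$ I would combine two moves. First, flip the sign of $\lambda$ using the principal anti-automorphism $\tau$ of $\cl U(\fr g)$, characterized by $\tau(\xi)=-\xi$ for $\xi\in\fr g$: it extends $\ms O_{X_\theta}$-linearly to an anti-automorphism $\tau^\circ$ of $\cl U^\circ = \ms O_{X_\theta}\otimes_\CC\cl U(\fr g)$ which is the identity on $\ms O_{X_\theta}$ and acts by $-1$ on $\fr g^\circ$, hence on $\fr p^\circ$. Thus $\tau^\circ(\phi_\lambda(s)) = -s-\lambda^\circ(s)$ for $s\in\fr p^\circ$, and reparametrizing $s\mapsto -s$ shows that $\tau^\circ$ carries the defining ideal $\ms I_\lambda$ onto $\ms I_{-\lambda}$; descending to the quotients, $\tau^\circ$ identifies $\ms D_{X_\theta,-\lambda}$ with the opposite \emph{ring} $\ms D_{X_\theta,\lambda}^{\rm{op}}$. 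Second, the opposite \emph{tdo} $\ms D^\circ$ carries, beyond the opposite ring structure, the canonical-bundle twist that mediates the left/right module dictionary on $X_\theta$ -- the twist by the relative dualizing sheaf that already appears in the definition of the direct-image functor above as $\omega_{Y/X}$, here specialized to $\omega_{X_\theta}$. Conjugating a homogeneous tdo by a $G$-equivariant line bundle $\cl L$ shifts its parameter in $\fr h_\theta^*$ by the $d\log$-class of $\cl L$, and the canonical bundle of $X_\theta = G/P_x$ is the $G$-equivariant line bundle attached to the $P_x$-character $\det\big((\fr g/\fr p_x)^*\big)=\det(\fr n_x)$, whose $\fr h$-weights are the roots of $\fr n_x$ and sum to $2\rho_n$. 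Putting the two moves together gives $\mu = -\lambda+2\rho_n$.

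The step I expect to be the genuine obstacle is the second one: correctly matching $\ms D^\circ$ against the $\omega_{X_\theta}$-conjugate of $\ms D^{\rm{op}}$ and -- above all -- pinning down the \emph{sign} of the resulting shift, so that it comes out $+2\rho_n$ rather than $-2\rho_n$. This is exactly where the various normalizations ($\rho$- versus $\rho_n$- versus $\rho_\theta$-shifts) interact, and the discrepancy between "opposite ring" and "opposite tdo" is accounted for entirely by this canonical twist. To stay honest I would keep two checks at hand: specializing to $\theta=\emptyset$, where $X_\theta$ is the full flag variety and the answer must reduce to the known full-flag formula $\ms D_{X,\lambda}^\circ = \ms D_{X,-\lambda+2\rho}$; and a global-sections check -- $\G(X_\theta,\ms D_{X_\theta,\lambda}^\circ) = \G(X_\theta,\ms D_{X_\theta,\lambda})^{\rm{op}}$ has infinitesimal character $[-(\lambda-\rho)]$ by the antipode, and matching this against $[\mu-\rho]$ together with the constraint $\mu\in\fr h_\theta^*$ forces $\mu = \rho + w_{0,\theta}(\rho-\lambda) = -\lambda+2\rho_n$, using that $w_{0,\theta}$ fixes $\rho_n$ and $\lambda$ and sends $\rho_\theta$ to $-\rho_\theta$.
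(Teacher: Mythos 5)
The paper itself records this proposition without proof (it is imported from the theory in \cite{d-book}, where $\ms{D}^\circ$ denotes the opposite ring of the tdo), so the comparison is with that standard argument. Your final formula and the guiding intuition --- that the shift is the weight $2\rho_n$ of $\omega_{X_\theta}$ --- are correct, but the two-step architecture of your argument contains a genuine error, and the two steps only produce the right answer because they compensate each other. The problem with the first move is that $\cl{U}^\circ=\ms{O}_{X_\theta}\otimes_\CC\cl{U}(\fr{g})$ is not the plain tensor product algebra: it carries the twisted multiplication $\xi f-f\xi=\xi(f)$ for $\xi\in\fr{g}$ and $f\in\ms{O}_{X_\theta}$ (otherwise the quotient could never be a tdo). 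Consequently there is no anti-automorphism that is the identity on $\ms{O}_{X_\theta}$ and acts $\ms{O}$-linearly by $-1$ on $\fr{g}^\circ$: the map fixing $\ms{O}_{X_\theta}$ and sending $1\otimes\xi\mapsto-1\otimes\xi$ does extend to an anti-automorphism $\tau^\circ$, but on an $\ms{O}$-section $s=\sum_i f_i\,\xi_i$ of $\fr{p}^\circ$ it gives $\tau^\circ(s)=-s-\sum_i\xi_i(f_i)$, and the extra term $s\mapsto\sum_i\xi_i(f_i)$ is an $\ms{O}$-linear, $G$-equivariant form on $\fr{p}^\circ$ whose fibre at $x$ is (up to the sign fixed by the convention for the action map $\fr{g}\to\ms{T}_{X_\theta}$) the character $\mathrm{tr}\,\mathrm{ad}(\,\cdot\,)|_{\fr{g}/\fr{p}_x}=\mp2\rho_n$. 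Hence the correct statement is $\tau^\circ(\ms{I}_\lambda)=\ms{I}_{-\lambda+2\rho_n}$, not $\ms{I}_{-\lambda}$: the whole $2\rho_n$ already appears at this stage.

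Correspondingly, your second move rests on a false dichotomy. In this paper (and in \cite{d-book}) $\ms{D}^\circ$ \emph{is} the opposite ring --- that is exactly what makes $\cl{M}(\ms{D}^\circ)_{\mathrm{r}}\simeq\cl{M}(\ms{D})$ the identity on objects --- so there is no additional canonical-bundle conjugation to perform on top of the opposite ring structure. The role of $\omega_{X_\theta}=\ms{O}(2\rho_n)$ is to explain the divergence term above (locally, the transpose with respect to a volume form; globally, the statement $\ms{D}_{X_\theta}^{\mathrm{op}}\simeq\ms{D}_{X_\theta}^{\omega_{X_\theta}}$), not to modify the definition of $\ms{D}^\circ$. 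Note that if your first move were actually correct you would have proved $\ms{D}_{X_\theta,\lambda}^\circ=\ms{D}_{X_\theta,-\lambda}$, contradicting the proposition, and grafting a $2\rho_n$ on afterwards would double-count. The fix is simply to carry out the anti-automorphism computation honestly as above (or to compute $\ms{D}^{\mathrm{op}}$ locally via the volume-form transpose and keep track of the homogeneous structure); your consistency checks --- specializing to $\theta=\emptyset$, and the infinitesimal-character argument with $w_{0,\theta}$ fixing $\rho_n$ and $\lambda$ --- are sound as checks, but they cannot replace that step.
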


  %%%%%
  \subsection{Anti-dominance and $\ms{D}$-affineness}
  %%%%%
  
  In this section we give some vanishing results for cohomology of $\ms{D}$-modules
  on generalized flag varieties.    
  Let $\fr{g}$ be a complex semi-simple Lie algebra, with abstract Cartan triple 
  $(\fr{h},\Sigma,\Sigma^+)$.   We will use $\Sigma^\vee$ to denote the co-roots in
  $\fr{h}$.    For $\lambda\in\fr{h}^*$, we say $\lambda$ is 
  \emph{anti-dominant} if $\alpha^\vee(\lambda)$ is not a positive integer for all 
  $\alpha\in\Sigma^+$.  Further, we say $\lambda$ is \emph{regular} if the 
  $\alpha^\vee(\lambda)$ are all non-zero as well.   
  If $\theta\subset \Pi^+$ is a subset of simple roots, let $\Sigma^+_\theta$ denote the 
  closure of $\theta$ in $\Sigma^+$ under addition.  Define $\Sigma_n:=
  \Sigma^+\backslash\Sigma^+_\theta$.  For $\fr{p}$ a parabolic of type $\theta$, 
  any specialization of $(\fr{h},\Sigma,\Sigma^+)$ to a Cartan triple for $\fr{p}$ will 
  send $\Sigma^+_\theta$ to positive roots contained in a Levi factor of $\fr{p}$ and 
  $\Sigma_n$ to the roots of the nilradical of $\fr{p}$.  Let $\rho_\theta$ 
  and $\rho_n$ be the half-sum of positive roots in $\Sigma^+_\theta$, respectively
  $\Sigma_n$.   Since $\fr{h}_\theta^*$ naturally embeds to a subspace of $\fr{h}^*$,
  we can define anti-dominance on $\fr{h}_\theta^*$ by restricting the condition on 
  $\fr{h}^*$.  However, it will be more useful to include a shift in the definition.
  \begin{defn}
  	The character $\lambda\in\fr{h}_\theta^*$ is \emph{anti-dominant}
	 if $\lambda-\rho_\theta\in\fr{h}^*$ is.  Likewise, $\lambda\in \fr{h}^*_\theta$ is
	 \emph{regular} if $\lambda-\rho_\theta\in\fr{h}^*$ is.  
  \end{defn}
  If $\theta$ is empty, $\fr{h}_\theta=\fr{h}$ and $\rho_\theta=0$, so this generalized definition is
  consistent with the original.    From \cite{bb}, we have the following definition and results.  
  
  \begin{defn}  Let $X$ be a generalized flag variety and $\ms{D}$ a tdo on $X$.  
  Say $X$ is \emph{$\ms{D}$-affine} if for every $\ms{F}\in\cl{M}(\ms{D})$ we have
   $\G(X,\ms{F})$  generated
  by global sections and $\rm{H}^i(X,\ms{F}) =0 $ for all $i>0$.
  \end{defn}
  
  \begin{prop} If $X$ is $\ms{D}$-affine, the global sections functor 
  \begin{equation*}
  	\G: \cl{M}(\ms{D})\to \cl{M}(\cl{D})
  \end{equation*}
  is an equivalence of categories, where $\cl{D}=\G(X,\ms{D})$.
  \end{prop}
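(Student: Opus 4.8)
The plan is to run the classical Beilinson--Bernstein argument (as in \cite{bb}), realizing a quasi-inverse of $\G$ via localization. Write $\cl{D}=\G(X,\ms{D})$ and let $\Delta\colon\cl{M}(\cl{D})\to\cl{M}(\ms{D})$ be the localization functor $\Delta(M)=\ms{D}\otimes_{\cl{D}}M$, by which we mean the sheaf $U\mapsto\ms{D}(U)\otimes_{\cl{D}}M$. Since a $\ms{D}$-linear sheaf morphism $\Delta(M)\to\ms{F}$ is determined by, and is the same as, a $\cl{D}$-linear map $M\to\G(X,\ms{F})$, the functor $\Delta$ is left adjoint to $\G$; write $\eta_M\colon M\to\G\Delta(M)$ and $\varepsilon_{\ms{F}}\colon\Delta\G(\ms{F})\to\ms{F}$ for the unit and counit. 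To prove the proposition it suffices to show that $\eta$ and $\varepsilon$ are natural isomorphisms.

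First I would extract two consequences of $\ms{D}$-affineness and use them to handle the unit. Since $\mathrm{H}^i(X,\ms{F})=0$ for $i>0$ and all $\ms{F}\in\cl{M}(\ms{D})$, the long exact cohomology sequence collapses and $\G$ is exact on $\cl{M}(\ms{D})$; and since every $\ms{D}$-module is generated by its global sections, $\G(\ms{F})=0$ forces $\ms{F}=0$ (the submodule generated by global sections is then both $0$ and all of $\ms{F}$), so $\G$ kills no nonzero object, hence is faithful. Now $\eta_{\cl{D}}$ is the identity, because $\G\Delta(\cl{D})=\G(\ms{D})=\cl{D}$. As $X$ is a generalized flag variety, hence a projective (so Noetherian, quasi-compact, quasi-separated) scheme, $\G$ commutes with arbitrary direct sums of quasi-coherent sheaves, so $\eta$ is an isomorphism on every free $\cl{D}$-module; for general $M$ I would take a presentation $\cl{D}^{(J)}\to\cl{D}^{(I)}\to M\to 0$, apply the right-exact $\Delta$ and the exact $\G$ to get an exact sequence $\cl{D}^{(J)}\to\cl{D}^{(I)}\to\G\Delta(M)\to 0$, and conclude by naturality of $\eta$, the free case, and the five lemma that $\eta_M$ is an isomorphism. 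In particular $\Delta$ is fully faithful and every $M\in\cl{M}(\cl{D})$ lies in the essential image of $\G$, since $M\simeq\G(\Delta M)$.

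It remains to show $\varepsilon_{\ms{F}}$ is an isomorphism for every $\ms{F}$. Its image is the $\ms{D}$-submodule of $\ms{F}$ generated by the global sections, so $\ms{D}$-affineness makes $\varepsilon_{\ms{F}}$ surjective; set $\cl{K}=\ker\varepsilon_{\ms{F}}$. Applying the exact functor $\G$ gives
\begin{equation*}
0\longrightarrow\G(\cl{K})\longrightarrow\G\Delta\G(\ms{F})\xrightarrow{\ \G(\varepsilon_{\ms{F}})\ }\G(\ms{F})\longrightarrow 0 .
\end{equation*}
The triangle identity $\G(\varepsilon_{\ms{F}})\circ\eta_{\G(\ms{F})}=\mathrm{id}_{\G(\ms{F})}$, together with the fact just proved that $\eta_{\G(\ms{F})}$ is an isomorphism, shows $\G(\varepsilon_{\ms{F}})$ is an isomorphism, whence $\G(\cl{K})=0$ and therefore $\cl{K}=0$ because $\G$ kills no nonzero object. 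Thus $\varepsilon$ and $\eta$ are both natural isomorphisms and $\G$ is an equivalence of categories with quasi-inverse $\Delta$.

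The only step I expect to require genuine care is the commutation of $\G$ with infinite direct sums in the free-module case: this is precisely where projectivity (equivalently, Noetherianness / quasi-compactness) of the generalized flag variety enters, and it is what makes the presentation argument for $\eta$ valid for arbitrary, not just finitely generated, $\cl{D}$-modules. Once exactness of $\G$, the ``$\G$ kills no nonzero object'' fact, and this direct-sum interchange are in place, the rest is formal manipulation of the adjunction $\Delta\dashv\G$.
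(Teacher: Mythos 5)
Your argument is correct and is precisely the classical Beilinson--Bernstein localization argument that the paper itself invokes by citation rather than proving: exactness and faithfulness of $\G$ from $\ms{D}$-affineness, the adjunction $\Delta\dashv\G$, commutation of $\G$ with direct sums on the quasi-compact variety $X$ to handle the unit on free modules and then arbitrary presentations, and the counit handled via generation by global sections plus $\G(\cl{K})=0\Rightarrow\cl{K}=0$. Nothing is missing, and the point you flag as delicate (interchange of $\G$ with infinite direct sums) is exactly the right one to single out.
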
 
  
  Anti-dominance of $\lambda$ is necessary for $\ms{D}_\lambda$-affineness of 
  the full flag variety $X$; see for example \cite{m} or \cite{bb}.  Note our convention 
  of positive roots is the opposite of \cite{bb};  i.e., for them dominance rather than anti-dominance
  of $\lambda$ determines $\ms{D}_\lambda$-affineness.  

  \begin{thm}
  	Let $X$ be the full flag variety, $\lambda\in\fr{h}^*$.  
	\begin{enumerate}
		\item If $\lambda$ is dominant, then $\G: \cl{M}(\ms{D}_\lambda)\to
		\cl{M}(\cl{U}_{[\lambda]})$ is exact.  
		\item If $\lambda$ is also regular, then $\G$ is faithful.
	\end{enumerate}
	\end{thm}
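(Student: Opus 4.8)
The plan is to reduce (1) to a cohomology-vanishing statement on $X$, prove that by a good-filtration argument on the cotangent bundle together with the translation principle, and then to derive (2) from (1). For (1): the functor $\G=\rm{H}^0(X,-)$ is left exact, so exactness follows once we know $\rm{H}^i(X,\ms{F})=0$ for all $\ms{F}\in\cl{M}(\ms{D}_\lambda)$ and all $i>0$ --- indeed $\rm{H}^1$-vanishing alone already forces right exactness through the cohomology long exact sequence. Since $X$ is noetherian, $\rm{H}^i(X,-)$ commutes with filtered colimits and every $\ms{D}_\lambda$-module is the filtered union of its coherent submodules, so I may assume $\ms{F}$ coherent. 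Fix a good filtration $F_\bul\ms{F}$ by $\ms{O}_X$-coherent subsheaves compatible with the order filtration of $\ms{D}_\lambda$; since $\rm{gr}\,\ms{D}_\lambda\cong\rm{Sym}_{\ms{O}_X}\ms{T}_X=\pi_*\ms{O}_{T^*X}$ independently of $\lambda$, the graded sheaf is $\rm{gr}\,\ms{F}=\pi_*\widetilde{\ms{F}}$ for a coherent $\CC^\times$-equivariant sheaf $\widetilde{\ms{F}}$ on the cotangent bundle $\pi:T^*X\to X$. Because $\pi$ is affine and a good filtration is bounded below, an induction along the long exact sequences of the filtration shows that if $\ms{G}$ is a coherent $\ms{D}_\mu$-module whose conic sheaf $\widetilde{\ms{G}}$ on $T^*X$ satisfies $\rm{H}^i(T^*X,\widetilde{\ms{G}})=0$ for $i>0$, then $\rm{H}^i(X,\ms{G})=0$ for $i>0$.

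Thus it suffices to manufacture from $\ms{F}$ a twist with acyclic conic sheaf. Fix a regular dominant integral weight $\nu$, so $\ms{O}_X(\nu)$ is ample, and set $\ms{F}(n\nu):=\ms{F}\otimes_{\ms{O}_X}\ms{O}_X(n\nu)$, a coherent $\ms{D}_{\lambda+n\nu}$-module with $\rm{gr}\,\ms{F}(n\nu)=\pi_*\!\big(\widetilde{\ms{F}}\otimes\pi^*\ms{O}_X(n\nu)\big)$. The moment map $\mu:T^*X\to\fr{g}^*$ is proper --- it is the closed immersion $T^*X\hookrightarrow X\times\fr{g}^*$ followed by the projection, $X$ being proper --- with image in the affine nilpotent cone, and $\pi^*\ms{O}_X(\nu)$ is $\mu$-ample, since every nonempty fibre of $\mu$ is a Springer fibre that $\pi$ maps isomorphically onto a closed subvariety of $X$, on which the ample bundle $\ms{O}_X(\nu)$ stays ample. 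Grothendieck's relative Serre vanishing then gives $\rm{\bf R}^i\mu_*\!\big(\widetilde{\ms{F}}\otimes\pi^*\ms{O}_X(n\nu)\big)=0$ for $i>0$, $n\gg0$, and as $\fr{g}^*$ is affine the Leray spectral sequence yields $\rm{H}^i\!\big(T^*X,\widetilde{\ms{F}}\otimes\pi^*\ms{O}_X(n\nu)\big)=0$ for $i>0$, $n\gg0$; by the previous paragraph, $\rm{H}^i(X,\ms{F}(n\nu))=0$ for $i>0$ and $n$ large.

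It remains to descend this vanishing from the very dominant $\lambda+n\nu$ back to $\lambda$, and this is the one non-formal step --- the place where dominance of $\lambda$ is genuinely used --- hence the main obstacle. I would use the translation functor: let $E$ be the irreducible $G$-module with lowest weight $-n\nu$; the trivial bundle $\ms{O}_X\otimes_\CC E$ is $G$-equivariantly trivial but carries a finite $B$-stable filtration with line-bundle subquotients, and the projection of $\ms{F}(n\nu)\otimes_{\ms{O}_X}(\ms{O}_X\otimes_\CC E)$ onto its $[\lambda]$-central-character summand is an exact functor $\cl{M}(\ms{D}_{\lambda+n\nu})\to\cl{M}(\ms{D}_\lambda)$. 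Because $\lambda$ and $\lambda+n\nu$ both lie in the closed dominant chamber, $-n\nu$ is the only weight of $E$ translating the block $[\lambda+n\nu]$ to $[\lambda]$ and it has multiplicity one, so this functor carries $\ms{F}(n\nu)$ precisely to $\ms{F}$; and it commutes with $\rm{\bf R}\G$ --- tensoring with the finite-dimensional $E$ and projecting off a central-character summand both commute with sheaf cohomology --- so the concentration of $\rm{\bf R}\G(\ms{F}(n\nu))$ in degree $0$ is transported to $\rm{\bf R}\G(\ms{F})$, i.e.\ $\rm{H}^i(X,\ms{F})=0$ for $i>0$. The dominance of $\lambda$ enters here, and only here, to ensure that no wall-crossing cohomology is created in the translation; everything else in (1) is classical Serre theory and formal adjunction bookkeeping.

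For (2): part (1) makes $\G$ exact, and an exact functor of abelian categories is faithful precisely when it annihilates no nonzero object; using the adjunction between $\G$ and the localization functor $\ms{D}_\lambda\otimes_{\cl{U}_{[\lambda]}}(-)$, together with $\G(X,\ms{D}_\lambda)=\cl{U}_{[\lambda]}$ and the exactness just established, one checks this is equivalent to every $\ms{D}_\lambda$-module being generated by its global sections, that is, to $X$ being $\ms{D}_\lambda$-affine. So it reduces to showing $\G(X,\ms{F})\neq0$ for every nonzero coherent $\ms{D}_\lambda$-module $\ms{F}$ --- a nonzero coherent submodule of $\ms{F}$ has its sections inside those of $\ms{F}$ by left exactness. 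I would prove this by restricting $\ms{F}$ to the big Bruhat cell $U\cong\CC^{\dim X}$, on which it becomes a nonzero module over an (untwisted) sheaf of differential operators on affine space and hence has nonzero sections, and then transporting these to honest global sections on $X$ by means of the intertwining functor attached to the longest Weyl element $w_0$, which is an equivalence exactly when $\lambda$ is regular --- the sole point at which regularity is indispensable. Alternatively one may simply invoke the sufficiency half of the Beilinson--Bernstein theorem (\cite{bb}, \cite{m}). Either way, the crux of (2) is this non-vanishing of global sections, which cannot be divorced from the regularity of $\lambda$.
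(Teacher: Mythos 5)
The paper never proves this theorem; it is quoted from \cite{bb} and \cite{m} (note also the convention clash: the paper's own normalization is anti-dominance, and the statement's ``dominant'' is in the \cite{bb} convention), so there is no internal argument to measure you against and your reconstruction must stand on its own. Part (1) does: it is in substance the Beilinson--Bernstein argument. The reduction to coherent modules, the good filtration with $\rm{gr}\,\ms{D}_\lambda\simeq\pi_*\ms{O}_{T^*X}$, the uniform vanishing for large twists via properness of the moment map and $\mu$-ampleness of $\pi^*\ms{O}_X(\nu)$ (which is exactly how one handles all graded pieces at once), and the translation back to the block $[\lambda]$ through the extreme weight of $E$, with dominance used only to rule out $[\lambda+n\nu+w]=[\lambda]$ for non-extreme weights $w$, is the correct skeleton. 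What you leave implicit is standard: that the $\cl{Z}(\fr{g})$-action preserves the weight filtration of $\ms{F}(n\nu)\otimes(\ms{O}_X\otimes E)$ and acts on the subquotient $\ms{F}(n\nu+w)$ through $[\lambda+n\nu+w]$, and that for transferring vanishing you only need $\ms{F}$ as an $\ms{O}_X$-module direct summand, so the precise $\ms{D}_\lambda$-structure on the block projection need not be addressed.

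The genuine gap is the crux of part (2), the non-vanishing $\G(X,\ms{F})\neq0$ for $\ms{F}\neq0$. The reduction of faithfulness to this statement is fine, but your mechanism fails as stated: the restriction of $\ms{F}$ to a fixed big cell can vanish (take $\ms{F}$ supported on a Schubert divisor), and, more seriously, the intertwining functor $I_{w_0}$ does not ``transport'' sections over a cell to global sections; it is an equivalence satisfying $\rm{\bf R}\G\circ I_{w_0}\simeq\rm{\bf R}\G$ up to a change of twist, and extracting non-vanishing from intertwining functors requires the genuine induction on Weyl-group length carried out in \cite{m}, not a one-step transport. A repair consistent with your part (1): relative Serre also gives global generation of $\widetilde{\ms{F}}\otimes\pi^*\ms{O}_X(n\nu)$ for $n\gg0$, hence $\G(X,\ms{F}(n\nu))\neq0$; since $\lambda$ and $\lambda+n\nu$ are both dominant and regular with the same integral roots, the translation functors between the blocks $[\lambda+n\nu]$ and $[\lambda]$ are mutually inverse equivalences, and your identity $\G(X,\ms{F})\simeq(\G(X,\ms{F}(n\nu))\otimes E)_{[\lambda]}$ then forces $\G(X,\ms{F})\neq0$ --- this is exactly where regularity is indispensable, since translation to a singular block can kill modules. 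Your fallback of simply citing the sufficiency half of \cite{bb} is what the paper itself does for the whole theorem, but it leaves your part (2) without an argument.
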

	A consequence of this theorem is that for $\lambda$ anti-dominant and regular, 
		$\G$ gives an equivalence of 
		categories.  Its quasi-inverse $\Delta_\lambda$
		sends a $\cl{U}_{[\lambda]}$-module $V$ to 
		\begin{equation*}
			\Delta_\lambda(V) =\ms{D}_\lambda\otimes_{\cl{U}_{[\lambda]}}V.
		\end{equation*}   
		  
\noindent We prove the following proposition in \S \ref{embthm}.

  \begin{prop}
  	Let $\lambda\in \fr{h}_\theta^*$ be anti-dominant and regular.   Then $X_\theta$ is
	$\ms{D}_\lambda$-affine.
  \end{prop}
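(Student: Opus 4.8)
The strategy is to reduce $\ms{D}_\lambda$-affineness of $X_\theta$ to $\ms{D}_{\lambda-\rho_\theta}$-affineness of the full flag variety $X$, which we already know holds for $\lambda-\rho_\theta$ anti-dominant and regular by the Beilinson--Bernstein theory quoted above. The bridge is the projection $p : X \to X_\theta$, together with the identity $\ms{D}_\lambda^p = \ms{D}_{\lambda-\rho_\theta}$ recorded in the section on twisting parameters. First I would use the fact that $p$ is a surjective submersion with fibers isomorphic to the flag variety of a Levi factor, so that $p_*$ and the derived functors $\rm{\bf R}^q p_*$ along the fibers are understood: in particular, since $\lambda-\rho_\theta$ restricted to the fiber direction is anti-dominant for the Levi (by the definition of anti-dominance on $\fr{h}_\theta^*$, which only shifts by $\rho_\theta$, together with the fact that $\rho_n$ is dominant on the nilradical roots), the relative cohomology vanishes in positive degree and the relative global sections functor is exact along fibers. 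Concretely, for $\ms{F} \in \cl{M}(\ms{D}_\lambda)$ I would examine $p^\circ\ms{F} \in \cl{M}(\ms{D}_{\lambda-\rho_\theta})$ on $X$.

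The key computation is then a Leray-type spectral sequence argument. By the Embedding Theorem \ref{embthmstatement}, $p^\circ$ is fully faithful and, for $\lambda$ anti-dominant (in the shifted sense), $\G\circ p^\circ = p^*\circ\G$ where $p^* : \cl{M}(\G(\ms{D}_\lambda)) \to \cl{M}(\G(\ms{D}_{\lambda-\rho_\theta}))$ is pull-back of modules. Applying $\rm{\bf R}\G(X,-)$ to $p^\circ\ms{F}$ and factoring $\G(X,-) = \G(X_\theta,-)\circ p_*$, I would use that $p$ has flag-variety fibers on which $\ms{D}_{\lambda-\rho_\theta}$ is affine in the relative sense — so $\rm{\bf R}p_*(p^\circ\ms{F}) = p_*p^\circ\ms{F}$ is concentrated in degree zero and equals $\ms{F}$ by full faithfulness (or a direct fiberwise computation with $p_*p^\circ = \rm{id}$ on $\ms{O}$-modules twisted appropriately). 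Combined with $\ms{D}_{\lambda-\rho_\theta}$-affineness of $X$, this gives $\rm{H}^i(X,p^\circ\ms{F}) = \rm{H}^i(X_\theta,\ms{F}) = 0$ for $i>0$, and the generation-by-global-sections statement transfers the same way, using that $\G(X,p^\circ\ms{F}) = \G(X_\theta,\ms{F})$ as a module over $\G(X_\theta,\ms{D}_\lambda)$ via $p^*$ and that generation is preserved under the faithfully flat-like behavior of $p^*$ here.

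The main obstacle I anticipate is the first half: proving that the relative situation along $p$ is as nice as claimed, i.e. that $\ms{D}_{\lambda-\rho_\theta}$ is affine along the fibers of $p$ and that $p_* p^\circ = \rm{id}$ on $\cl{M}(\ms{D}_\lambda)$. This requires carefully matching the twist $\ms{D}_{\lambda-\rho_\theta}$ restricted to a fiber $p^{-1}(x_\theta) \cong X_{\fr{l}}$ (the flag variety of the Levi $\fr{l}$) with a homogeneous tdo $\ms{D}_{\bar\lambda}$ on $X_{\fr{l}}$ and checking that $\bar\lambda$ is anti-dominant and regular for $\fr{l}$ — this is exactly where the shift by $\rho_\theta$ in the definition of anti-dominance on $\fr{h}_\theta^*$ is engineered to make things work, since on the fiber the relevant $\rho$-shift is $\rho_\theta$ itself, not $\rho$. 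Once the fiberwise affineness and $p_*p^\circ = \rm{id}$ are in hand, the spectral sequence collapses and the rest is formal. I would therefore organize the proof as: (i) identify the fiberwise tdo and invoke Beilinson--Bernstein on the Levi's flag variety to get relative affineness; (ii) deduce $\rm{\bf R}p_* p^\circ \ms{F} \simeq \ms{F}$; (iii) apply $\ms{D}_{\lambda-\rho_\theta}$-affineness of $X$ plus the Embedding Theorem to conclude both vanishing of higher cohomology and generation by global sections on $X_\theta$.
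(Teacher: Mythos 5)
Your proposal is correct and is essentially the paper's own route: the proposition is proved in the section on the Embedding Theorem by pulling back along $p:X\to X_\theta$, using $\ms{D}_\lambda^p=\ms{D}_{\lambda-\rho_\theta}$, the identities $p_*p^\circ\ms{F}\simeq\ms{F}$ and $\rm{\bf R}\G(X_\theta,\ms{F})\simeq\rm{\bf R}\G(X,p^\circ\ms{F})$, and then Beilinson--Bernstein on the full flag variety for the parameter $\lambda-\rho_\theta$, which is anti-dominant (and regular) precisely by the shifted definition of anti-dominance on $\fr{h}_\theta^*$. The one place where your reasoning is slightly misplaced is the fiberwise step: you justify the vanishing of higher relative cohomology by anti-dominance of ``$\lambda-\rho_\theta$ in the fiber direction'' for the Levi, as if the $\rho_\theta$-shift were engineered for that purpose. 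In fact $\lambda\in\fr{h}_\theta^*$ is trivial on the coroots of $\theta$, so the twist of $\ms{D}_\lambda^p$ along a fiber is the canonical one on the Levi's flag variety, independent of $\lambda$; accordingly the relative vanishing --- which the paper gets from the projection formula together with a Borel--Weil--Bott computation showing $\rm{\bf R}p_*(\omega_{X/X_\theta}\otimes\cl{T}^k_{X/X_\theta})$ vanishes except in the extreme degree, where it is $\ms{O}_{X_\theta}$ --- holds for \emph{every} $\lambda$, which is exactly why $p^\circ$ is fully faithful with no hypothesis on $\lambda$. The shift by $\rho_\theta$ is used only in the last step, to know that $\ms{D}_\lambda^p=\ms{D}_{\lambda-\rho_\theta}$ falls in the anti-dominant regular range on $X$. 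Your fiberwise $\ms{D}$-affineness argument does give the needed vanishing (it is just a heavier tool than the $\ms{O}$-module projection formula the paper uses), so the proof goes through; do, however, spell out the push-down of the generation statement --- that surjectivity of $\ms{D}_\lambda^p\otimes\G(X,p^\circ\ms{F})\to p^\circ\ms{F}$ forces surjectivity of $\ms{D}_\lambda\otimes\G(X_\theta,\ms{F})\to\ms{F}$ --- using exactness and faithfulness of $p^\circ$ applied to the cokernel, rather than an appeal to ``faithfully flat-like behavior'' of $p^*$ on global sections.
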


%%%%%%%%%%%%%%%%%%%%%%%%%%%%%%%%%%%%%%%%%%%%%%
%%%%%%%%%%%%%%%%%%%%%%%%%%%%%%%%%%%%%%%%%%%%%%

%%%%%%%
\section{The Equivariant Zuckerman Functor}
%%%%%%%

In this section, we recall the main definitions and some results
of the thesis of Pand\v zi\'c \cite{p}, including the construction of the equivariant Zuckerman
functor.  

%%%%%%
\subsection{$(\cl{A},K)$-Modules}
%%%%%%

  Let $(\cl{A}, K)$ be a pair consisting of an associative
algebra $\cl{A}$ over $\CC$ and $K$ a complex algebraic group.  The algebra $\cl{A}$ is
equipped with an algebraic $K$-action $\phi$, and a $K$-equivariant 
Lie algebra morphism $\psi: \fr{k}\to \cl{A}$ such that 
\begin{equation*}
    d\phi(\xi)(a) = [\psi(\xi),a],\quad \xi\in\fr{k}, \; a\in\cl{A}.
\end{equation*}  
Such pairs are called \emph{Harish-Chandra pairs}.
We will eventually take $\cl{A}$ to be global sections of a tdo on a generalized flag variety.  

\begin{defn}
  A \emph{weak $(\cl{A},K)$-module}  is a triple $(V,\pi,\nu)$ consisting of 
  \begin{enumerate}
     \item $V$ an $\cl{A}$-module with action $\pi$, and
     \item $V$ an algebraic $K$-module with action $\nu$, such that
     \item the $\cl{A}$-action map $\cl{A}\otimes V\to V$
     is $K$-equivariant.  In other words, \begin{equation*}\nu(k)\pi(a)\nu(k^{-1}) = \pi(\phi(k)a)\end{equation*}
     for all $k\in K$ and $a\in\cl{A}$. 
  \end{enumerate}
  An \emph{$(\cl{A},K)$-module} is a weak $(\cl{A},K)$-module
  such that 
  \begin{enumerate}
     \item[(4)] $d\nu = \pi\circ\psi$.
  \end{enumerate}
\end{defn}
\noindent This definition generalizes the notion of (weak) Harish-Chandra modules for the pair
 $(\fr{g},K)$.

Let $\cl{M}_{w}(\cl{A},K)$ be the category of all weak $(\cl{A},K)$-modules.  
{Morphisms} of weak $(\cl{A},K)$-modules are linear 
maps compatible with both the $\cl{A}$- and $K$-module 
structures.   Similarly, denote by $\cl{M}(\cl{A},K)$ the category of 
$(\cl{A},K)$-modules.  Let $\cl{C}(\cl{M}_{(w)}(\cl{A},K))$ and $\cl{K}(\cl{M}_{(w)}(\cl{A},K))$ 
denote the category of complexes and homotopy category of complexes of (weak) 
$(\cl{A},K)$-modules, respectively.  
The derived category $\rm{D}(\cl{M}_{(w)}(\cl{A},K))$ of (weak) $(\cl{A},K)$-modules is
constructed in the usual way, by localizing $\cl{K}(\cl{M}_{(w)}(\cl{A},K))$ with 
respect to quasi-isomorphisms.  Therefore for weak modules, we may simplify 
our notation by using $\cl{C}_{w}(\cl{A},K)$, etc.

%%%%%%%%%%
\subsection{Equivariant Derived Categories}\label{ForAK-cxs}
%%%%%%%%%%

Rather than working in the triangulated categories derived directly
from the abelian categories $\cl{M}(\cl{U}_{\chi},K)$ (for some  $\chi\in \fr{h}^*/\cl{W}$),   
for the purposes of 
localization it is necessary to work with the equivariant derived
category.  We give the needed definitions here.

\begin{defn}\label{equiHCcx}
   An \emph{equivariant $(\cl{A},K)$-complex} is a pair $(V^\bul,i)$ 
   with $V^\bul$ a complex of weak $(\cl{A},K)$-modules, and $i$ is 
   a linear map from $\fr{k}$ to graded linear degree $-1$ endomorphisms 
   of $V^\bul$ satisfying:
   \begin{enumerate}
      \item The $i_\xi$ are $\cl{A}$-morphisms for all $\xi\in\fr{k}$.
      \item The $i_\xi$ are $K$-equivariant for all $k\in K$.
      \item The sum $i_\xi i_\eta + i_\eta i_\xi = 0$ for all $\eta, \xi \in \fr{k}$.
      \item For every $\xi \in \fr{k}$, the sum $di_\xi + i_\xi d = \omega(\xi)$ where $\omega=\nu-\pi$.
   \end{enumerate}
\end{defn}
We summarize conditions $(1)$ and $(2)$ by stating 
$i\in \rm{Hom}_K(\fr{k},  \rm{Hom}_{\cl{A}}(V^\bul, V^\bul[-1] ))$,
where we take $\rm{Hom}_{\cl{A}}(V^\bul, V^\bul[-1] )$ in the category of graded 
$\cl{A}$-modules, and use the conjugation action of $K$.
Specifically, we have $K$ acting on  $f\in \rm{Hom}_{\cl{A}}(V^\bul, V^\bul[-1])$ by
\begin{equation*}
	(k.f)(v) = \nu(k).f(\nu(k^{-1})v)
\end{equation*} 
for all $k\in K$ and $v\in V^\bul$. 
The fourth condition implies the cohomology modules of $V^\bul$
are $(\cl{A},K)$-modules.

A morphism of equivariant $(\cl{A},K)$-complexes is a morphism
of complexes of weak $(\cl{A},K)$-modules which commutes with 
$i_\xi$ for all $\xi\in\fr{k}$. The category $\cl{C}(\cl{A},K)$
of equivariant $(\cl{A},K)$-complexes is abelian.
Two morphisms 
\begin{equation*}
	\phi,\psi: (V^\bul,i) \to (W^\bul,i)
\end{equation*} 
are {homotopic} if 
there exists a homotopy of complexes $h: V^\bul\to W^\bul[-1]$ which 
anti-commutes with $i_\xi$ for all $\xi\in\fr{k}$. That is, 
\begin{equation*}
	h\circ i_\xi = -i_\xi\circ h.
\end{equation*}  
Let $\cl{K}(\cl{A},K)$ 
be the homotopy category of equivariant $(\cl{A},K)$-complexes and 
$\rm{D}(\cl{A},K)$ its localization by quasi-isomorphisms.  
The category $\rm{D}(\cl{A},K)$ is known as the {equivariant derived category
of $(\cl{A},K)$-modules}.

For modules $V^\bul$ and $W^\bul\in \cl{C}_w(\cl{A},K)$,  define the homomorphism 
complex (\emph{Hom-complex}) by setting 
   \begin{equation*}
       \rm{Hom}^k(V^\bul, W^\bul) = \prod_p\rm{Hom}_{\cl{M}_{w}(\cl{A},K)}
       (V^p, W^{p+k}) 
   \end{equation*}
   with differential $d^k(f) = d_W\circ f -(-1)^k f\circ d_V$.  Clearly then
      $   \rm{Hom}^0(V^\bul, W^\bul)  =  \rm{Hom}_{\cl{C}_{w}(\cl{A},K)}(V^\bul, W^{\bul})$
       %\;\;\; \rm{and} \\
       and 
       $\rm{H}^0(\rm{Hom}^\bul(V^\bul, W^\bul))  = \rm{Hom}_{\cl{K}_{w}(\cl{A},K)}(V^\bul,W^\bul)$.
The Hom-complex for objects $(V^\bul, i)$ and $(W^\bul, j)$ in  $\cl{C}(\cl{A},K)$ is defined 
in the same way, but with morphisms $f\in \rm{Hom}^k(V^\bul,W^\bul)$ such that 
\begin{equation*}
	fi_\xi =(-1)^kj_\xi f,\quad \forall\; \xi\in\fr{k}.
\end{equation*}
Again, we have 
$         \rm{Hom}^0(V^\bul, W^\bul)  =  \rm{Hom}_{\cl{C}(\cl{A},K)}(V^\bul, W^{\bul})$
	and 
       $\rm{H}^0(\rm{Hom}^\bul(V^\bul, W^\bul))  =  \rm{Hom}_{\cl{K}(\cl{A},K)}(V^\bul,W^\bul)$.

Let $(\ast)=\cl{C},\cl{K}$ or $\rm{D}$.  There is a functor
 $     \rm{For_h}$ from  $(\ast)(\cl{A},K)$ to $(\ast)_{w}(\cl{A},K)$ which forgets the homotopy
 $i$ for the object $(V^\bul, i)$.  
Obviously  $\rm{For_h}: \cl{C}(\cl{A},K) \to \cl{C}_{w}(\cl{A},K)$ is 
faithful, but the same cannot necessarily be said for the homotopy or
derived categories.

For the pair $(\fr{g},K)$ and $\cl{A}=\cl{U}(\fr{g})$ (or more generally a quotient
of $\cl{U}(\fr{g})$), an important example of an equivariant complex in $\cl{C}^\rm{b}(\cl{A},K)$ 
is the \emph{standard complex} $\cl{N}(\fr{g})$ of $\fr{g}$.  The complex underlying
$\cl{N}(\fr{g})$ is the Koszul resolution of $\CC$ as a 
$\cl{U}(\fr{g})$-module.  That is,  for any integer $k$, we have 
\begin{equation*}
\cl{N}(\fr{g})^{-k} = \cl{U}(\fr{g})\otimes_\CC \wedge^{k}\fr{g}.
\end{equation*}
   If $u\otimes \tau\in\cl{N}(\fr{g})^{-(k+1)}$ with 
 $\tau = \tau_0\wedge\ldots\wedge\tau_k$,  
 the Koszul differential in degree $-(k+1)$ is
 \begin{equation*}\begin{array}{rcl}
    d^{-(k+1)}(u\otimes \tau) & = & 
    \sum_{i=0}^k(-1)^i u\tau_i\otimes \tau_0\wedge\ldots\hat{\tau_i}\ldots\wedge\tau_k
    \\ & + & \sum_{0\leq i<j\leq k} (-1)^{i+j}u\otimes[\tau_i,\tau_j]\wedge
    \tau_0\wedge\ldots\hat{\tau_i}\ldots\hat{\tau_j}\ldots\wedge\tau_k.
    \end{array}
 \end{equation*}
 The action $\pi_N$ of $\fr{g}$ in any degree is by left multiplication on $\cl{U}(\fr{g})$.  
 The action $\nu_N$ of $K$ is induced on each side of the tensor product
  by  the map $\phi: K\to \rm{Int}(\fr{g})$, and its differential 
 \begin{equation*}
    d\nu_N(\xi)(u\otimes\tau) = d\phi(\xi)u \otimes\tau + u\otimes d\phi(\xi)\tau.
 \end{equation*}
 There is a natural homotopy $i$ of these actions on $\cl{N}(\fr{g})$ given in the following proposition.  
  \begin{prop}
     For any $\xi\in\fr{k}$,  
 $u\otimes\tau\in\cl{N}(\fr{g})^{-k}$,  define
     $  i_\xi(u\otimes\tau) = -u\otimes \psi(\xi)\wedge\tau$.
 Then, we have $(\cl{N}(\fr{g}), i)\in\cl{C}^\rm{b}(\cl{A}, K)$.
  \end{prop}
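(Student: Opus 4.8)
The plan is to verify directly the four defining conditions of an equivariant $(\cl{A},K)$-complex (Definition~\ref{equiHCcx}) for the pair $(\cl{N}(\fr{g}),i)$, taking $\cl{A}=\cl{U}(\fr{g})$; conditions (1)--(3) are formal and the real content lies in condition (4), which is the Cartan homotopy formula for the Chevalley--Eilenberg complex. Throughout write $\zeta=\psi(\xi)\in\fr{g}$ (identifying $\fr{k}$ with its image in $\fr{g}$) and $\tau=\tau_1\wedge\cdots\wedge\tau_k\in\wedge^k\fr{g}$. Condition (1) holds because $\pi_N(a)$ for $a\in\cl{U}(\fr{g})$ touches only the left tensor factor of $\cl{N}(\fr{g})^{-k}=\cl{U}(\fr{g})\otimes_\CC\wedge^k\fr{g}$ while $i_\xi$ touches only the right factor, so the two commute; condition (3) holds because $i_\xi i_\eta(u\otimes\tau)=u\otimes\psi(\xi)\wedge\psi(\eta)\wedge\tau$ and the exterior product is antisymmetric.

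For condition (2), I would unwind the conjugation $K$-action on $\Hom_{\cl{A}}(\cl{N}(\fr{g}),\cl{N}(\fr{g})[-1])$ recalled after Definition~\ref{equiHCcx}, obtaining
\[
\bigl(\nu_N(k)\circ i_\xi\circ\nu_N(k^{-1})\bigr)(u\otimes\tau)=-u\otimes\phi(k)\zeta\wedge\tau,
\]
where $\phi(k)$ is the action of $K$ on $\fr{g}$ through $K\to\mathrm{Int}(\fr{g})$. Since $\fr{k}$ is $\mathrm{Ad}(K)$-stable and $\psi$ is $K$-equivariant, $\phi(k)\zeta=\psi(\mathrm{Ad}(k)\xi)$, so the right-hand side is $i_{\mathrm{Ad}(k)\xi}(u\otimes\tau)$; with (1), this is exactly the statement $i\in\Hom_K\bigl(\fr{k},\Hom_{\cl{A}}(\cl{N}(\fr{g}),\cl{N}(\fr{g})[-1])\bigr)$ summarizing conditions (1)--(2).

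The substance is condition (4): with $\omega(\xi)=d\nu_N(\xi)-\pi_N(\zeta)$, where $\pi_N(\zeta)$ is left multiplication by $\zeta$, one must show $d\circ i_\xi+i_\xi\circ d=d\nu_N(\xi)-\pi_N(\zeta)$ on $\cl{N}(\fr{g})$. I would expand $d\,i_\xi(u\otimes\tau)=-d(u\otimes\zeta\wedge\tau)$ and $i_\xi\,d(u\otimes\tau)$ via the explicit Koszul differential and group the resulting monomials: (i) the single term where $d$ multiplies the inserted $\zeta$ back out of the wedge, contributing $-u\zeta\otimes\tau$; (ii) the terms where $d$ multiplies an original $\tau_i$ out while carrying $\zeta$ along, which occur in both $d\,i_\xi$ and $i_\xi\,d$ and cancel in pairs, the matching sign arising from moving $\tau_i$ past $\zeta$; (iii) the quadratic bracket terms $[\tau_i,\tau_j]$, which cancel in pairs for the same reason; and (iv) the mixed bracket terms $[\zeta,\tau_j]$, which survive only in $d\,i_\xi$ and, once $[\zeta,\tau_j]$ is moved back into the $j$-th slot, reassemble into $u\otimes d\phi(\xi)\tau$. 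Hence $(d\,i_\xi+i_\xi\,d)(u\otimes\tau)=-u\zeta\otimes\tau+u\otimes d\phi(\xi)\tau$, while $d\nu_N(\xi)(u\otimes\tau)=d\phi(\xi)u\otimes\tau+u\otimes d\phi(\xi)\tau$ with $d\phi(\xi)u=\zeta u-u\zeta$ (the adjoint action of $\zeta$ on $\cl{U}(\fr{g})$), so $(d\nu_N(\xi)-\pi_N(\zeta))(u\otimes\tau)=-u\zeta\otimes\tau+u\otimes d\phi(\xi)\tau$ equals the same thing; this gives (4). Finally $\cl{N}(\fr{g})^{-k}$ vanishes for $k>\dim\fr{g}$, so the complex is bounded and $(\cl{N}(\fr{g}),i)\in\cl{C}^{\mathrm{b}}(\cl{A},K)$.

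The only real obstacle is the sign bookkeeping in (4): it is the classical identity $d\,\varepsilon_\zeta+\varepsilon_\zeta\,d=\cl{L}_\zeta$, with $\varepsilon_\zeta$ left exterior multiplication by $\zeta$ and $\cl{L}_\zeta$ the Lie derivative, transcribed to the Chevalley--Eilenberg complex of $\fr{g}$ with coefficients in $\cl{U}(\fr{g})$, and nothing beyond careful tracking of the Koszul signs is needed.
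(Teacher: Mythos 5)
Your verification is correct, and it is exactly the ``straightforward check'' that the paper alludes to and omits: a direct confirmation of conditions (1)--(4) of the definition of an equivariant $(\cl{A},K)$-complex, with the only substantive point being the Cartan-type identity $d i_\xi + i_\xi d = \omega(\xi)$, whose sign bookkeeping you carry out correctly (both sides reduce to $-u\psi(\xi)\otimes\tau + u\otimes d\phi(\xi)\tau$). Nothing further is needed.
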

\noindent The proof is a straightforward check, which we omit.

  %%%%%%%%%%%%
  \subsection{The Right Adjoint}
  %%%%%%%%%%%%
  The results of \cite{p} can be stated in terms of the right adjoint $\rm{Ind_h}$ to 
  $\rm{For_h}$ defined below.  For the geometric constructions of sections 4 and 5, 
  an alternative definition using tensor products will be more useful.  We give both 
  definitions and show they are equivalent.  
 
      Define    $\rm{Ind_h} : \cl{C}_w(\cl{A},K)\to \cl{C}(\cl{A}, K)$  to take     
	$V^\bul\in \cl{C}_{w}(\cl{A},K)$ to  $\rm{Ind_h}(V^\bul)=\rm{Hom}^\bul(\cl{N}(\fr{k}), V^\bul)$
	with $f\circ \pi_N(\xi) = \omega_V(\phi(\xi))\circ f$   for all  $f\in \rm{Ind_h}(V^\bul)$.
    The pair $(\cl{A},K)$ acts on a map $f: \cl{N}(\fr{k})\to V^\bul$ by 
    \begin{equation*}
    	\begin{array}{rcll}
		\pi(X)f & = & \pi_V(X)\circ f,&\forall\; X\in\cl{A}\\
		\nu(k)f & = & \nu_V(k)\circ f\circ \nu_N(k^{-1}), &\forall\; k\in K
	\end{array}
    \end{equation*}
    where $(\pi_V, \nu_V)$ denote the $(\cl{A},K)$-actions on $V^\bul$.  
    Then, for all $\xi\in\fr{k}$, 
    	$\omega(\xi)f = -f\circ \omega_N(\xi)$.
    There is a natural homotopy $i$ on $\rm{Ind_h}(V^\bul)$ given by  
$        i_\xi f = (-1)^{k-1} f \circ i_\xi$
   for $f$ a degree $k$ homomorphism, and any $\xi\in\fr{k}$.  The following proposition 
   can be proved by direct computation.  
   \begin{prop}
   	For any $V^\bul\in\cl{C}_{w}(\cl{A},K)$, the pair $(\rm{Ind_h}(V^\bul),i)\in\cl{C}(\cl{A},K)$.
   \end{prop}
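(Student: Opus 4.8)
The claim is that $(\rm{Ind_h}(V^\bul), i)$, with the $(\cl{A},K)$-action and the homotopy $i$ defined above, lies in $\cl{C}(\cl{A},K)$; that is, it satisfies conditions (1)--(4) of Definition \ref{equiHCcx} (and underneath that, the axioms of a weak $(\cl{A},K)$-module and of a complex). The plan is to verify each axiom by unwinding the definitions and pushing the corresponding axiom for $\cl{N}(\fr{k})$ (which is an object of $\cl{C}^\rm{b}(\cl{U}(\fr{k}),K)$, hence carries its own homotopy $i_\xi^N(u\otimes\tau) = -u\otimes\psi(\xi)\wedge\tau$) and for $V^\bul$ (a weak $(\cl{A},K)$-complex) through the Hom-complex construction. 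Throughout, the sign bookkeeping is the real content, so I would fix conventions once: $f$ has degree $k$, the Hom-differential is $d(f) = d_V\circ f - (-1)^k f\circ d_N$, and the $i_\xi$ on $\rm{Ind_h}$ is $i_\xi f = (-1)^{k-1} f\circ i_\xi^N$.

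First I would check that $\rm{Ind_h}(V^\bul)$ is a complex of weak $(\cl{A},K)$-modules with the stated $\cl{A}$- and $K$-actions: $\pi(X)$ acts by post-composition with $\pi_V(X)$ and $\nu(k)$ by the conjugation $\nu_V(k)\circ(-)\circ\nu_N(k^{-1})$; compatibility (weak module axiom (3)) follows from the corresponding compatibility on $V^\bul$ together with the fact that $\nu_N$ is an algebra action, and the Hom-differential is $\cl{A}$-linear and $K$-equivariant because $d_V$ is. The extra relation $f\circ\pi_N(\xi) = \omega_V(\phi(\xi))\circ f$ built into the definition of $\rm{Ind_h}(V^\bul)$ is preserved by $d$, by $\pi(X)$, and by $\nu(k)$ — this needs the $K$-equivariance of $\psi$ and $\omega_V$ — so the construction is well-defined at the level of objects. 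Then I would verify the four conditions of Definition \ref{equiHCcx} for $i$:
(1) $i_\xi$ is an $\cl{A}$-morphism: immediate, since $\pi(X)$ is post-composition and $i_\xi$ is (signed) pre-composition with $i_\xi^N$, and these commute.
(2) $K$-equivariance of $i_\xi$: here one uses that $i^N$ is $K$-equivariant on $\cl{N}(\fr{k})$, i.e. $\nu_N(k)\circ i_\xi^N\circ\nu_N(k^{-1}) = i_{\mathrm{Ad}(k)\xi}^N$, together with the conjugation formula for $\nu$ on $\rm{Ind_h}$.
(3) $i_\xi i_\eta + i_\eta i_\xi = 0$: two pre-compositions with $i_\xi^N$ and $i_\eta^N$; the sign $(-1)^{k-1}$ on the outer application combines with $(-1)^{k-2}$ on the inner one to a single overall sign, and the anticommutation $i_\xi^N i_\eta^N + i_\eta^N i_\xi^N = 0$ on $\cl{N}(\fr{k})$ (axiom (3) for $\cl{N}(\fr{k})$) does the rest.
(4) $d i_\xi + i_\xi d = \omega(\xi) = (\nu - \pi)(\xi)$ on $\rm{Ind_h}(V^\bul)$: this is the crucial computation. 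Expanding $d(i_\xi f)$ and $i_\xi(df)$ with the Hom-differential, one gets terms $d_V\circ f\circ i_\xi^N$ and $f\circ i_\xi^N\circ d_N$ and $f\circ d_N\circ i_\xi^N$; the cross terms involving $d_V\circ f\circ i_\xi^N$ cancel after sign check, and the remaining $f\circ(d_N i_\xi^N + i_\xi^N d_N) = f\circ\omega_N(\xi)$ by axiom (4) for $\cl{N}(\fr{k})$. Then one invokes the identity $f\circ\omega_N(\xi) = -\omega_V(\phi(\xi))\circ f$ noted just above (a consequence of $f\circ\pi_N(\xi)=\omega_V(\phi(\xi))\circ f$ and $\omega_N = \nu_N - \pi_N$, using that $\nu_N(\xi)$ acts so that the $\nu_V$-part of $\omega_V$ reappears) to rewrite the right-hand side as $(\nu-\pi)(\xi)f = \omega(\xi)f$, up to the signs that have been tracked; one checks the overall sign works out to $+\omega(\xi)$.

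The main obstacle is condition (4): getting the signs to line up so that $di_\xi + i_\xi d$ equals exactly $\omega(\xi)$ rather than $-\omega(\xi)$ or $\pm\omega(\xi)$ plus a spurious term. The delicate point is reconciling the sign $(-1)^{k-1}$ in the definition of $i_\xi$ on $\rm{Ind_h}$ with the Koszul sign $-(-1)^k$ in the Hom-differential and with the sign in the relation $f\circ\omega_N(\xi) = -\omega_V(\phi(\xi))\circ f$; a careful degree-by-degree expansion on a homogeneous $f$ of degree $k$ is what makes this go through, and it is precisely the reason the $(-1)^{k-1}$ normalization was chosen. All the other axioms are formal once the object is shown to be well-defined, so I would present (4) in full and state the remaining verifications as routine.
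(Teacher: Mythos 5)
Your proposal is correct and is exactly the direct computation the paper itself invokes (it gives no written proof, asserting the verification of conditions (1)--(4) of Definition \ref{equiHCcx} by direct computation), with the same actions, Hom-differential, and homotopy $i_\xi f=(-1)^{k-1}f\circ i_\xi$. One clean-up in step (4): the identity to invoke is $\omega(\xi)f=-f\circ\omega_N(\xi)$ (which follows from $d\nu(\xi)f=d\nu_V(\xi)\circ f-f\circ d\nu_N(\xi)$ together with the defining relation $f\circ\pi_N(\xi)=\omega_V(\xi)\circ f$), not $f\circ\omega_N(\xi)=-\omega_V(\phi(\xi))\circ f$ as you wrote; with that correction the expansion gives $d(i_\xi f)+i_\xi(df)=-f\circ\omega_N(\xi)=\omega(\xi)f$ on the nose, so the sign you were worried about does come out to $+\omega(\xi)$.
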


\noindent   That $\rm{Ind_h}$ is right adjoint to the forgetful functor $\rm{For_h}$ is proved in detail in  \cite[Prop. 4.2.3]{k}.
  
  We can construct the right adjoint alternatively using a tensor product.  
  For  $V^\bul\in\cl{C}_{w}(\cl{A},K)$, let $\omega_V$ denote the $\fr{k}$-action
  on $V^\bul$ extended to $\cl{U}(\fr{k})$.  Then, $V^\bul$ can be made into a right $\cl{U}(\fr{k})$-module by 
  letting $u\in\cl{U}(\fr{k})$ act on $v\in V^\bul$ by 
  $
  	-\omega_V(u^\iota)v,
  $
  where $\iota$ is the principal anti-automorphism of $\cl{U}(\fr{k})$.  
 The total tensor product 
 $V^\bul\otimes_\fr{k}\cl{N}(\fr{k})$ has in degree $k$ the product
 \begin{equation*}
     (V^\bul\otimes_\fr{k}\cl{N}(\fr{k}))^k = \prod_{p\in\bb{Z}}  V^{p}\otimes_\fr{k}\cl{N}(\fr{k})^{k-p}
 \end{equation*}
  and its differential is the usual differential of a double complex 
  \begin{equation*}
       d^k(v\otimes n) = d_V^{p}v\otimes n+(-1)^pv\otimes d_N^{k-p}n,
  \end{equation*}
  for $v\otimes n\in V^{p}\otimes \cl{N}(\fr{k})^{k-p}$.  
  The algebra $\cl{A}$ acts on $V^\bul\otimes_\fr{k} \cl{N}(\fr{k})$ by
  \begin{equation*}
  	\pi(a)(v\otimes n) = \pi_V(a)v \otimes n,
  \end{equation*}
  for every $a\in \cl{A}$ and $v\otimes n\in V^\bul\otimes_\fr{k}\cl{N}(\fr{k})$.  
  The group $K$ acts diagonally as
  \begin{equation*}
  	\nu(k)(v\otimes n) = \nu_V(k)v \otimes \nu_N(k)n
  \end{equation*}
 for every $k\in K$.  Therefore, for every $\xi\in\fr{k}$ we have
 $	\omega(\xi)(v\otimes n) = 
	v\otimes \omega_N(\xi)n$.
  Define the homotopy of actions $i$ to be, up to a sign, the same as for $\cl{N}(\fr{k})$.  
  Specifically,  for $v\otimes n \in V^p\otimes_\fr{k}\cl{N}(\fr{k})^{k-p}$ and $\xi\in\fr{k}$, let
  $i_\xi (v\otimes n) = (-1)^{p} v\otimes i_\xi(n)$.
  
  \begin{prop}
  	With the above actions and $i$, the total tensor product $V^\bul\otimes_\fr{k}\cl{N}(\fr{k})$
	is an equivariant $(\cl{A},K)$-complex.  
  \end{prop}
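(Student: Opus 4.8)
The plan is to verify directly that the quadruple $(V^\bul\otimes_\fr{k}\cl{N}(\fr{k}),\ \pi,\ \nu,\ i)$ satisfies the four conditions of Definition \ref{equiHCcx}, reducing each condition to a property that is already known for $\cl{N}(\fr{k})$ with its homotopy $i$ (which is an object of $\cl{C}^\rm{b}(\cl{A},K)$ by the proposition above) together with the fact that the construction is a double complex built out of the tame right $\cl{U}(\fr{k})$-module structure $u\cdot v=-\omega_V(u^\iota)v$ on $V^\bul$. So the first step is bookkeeping: confirm that $\pi$ and $\nu$ as defined make $V^\bul\otimes_\fr{k}\cl{N}(\fr{k})$ a complex of weak $(\cl{A},K)$-modules. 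The $\cl{A}$-action commutes with the $\cl{N}(\fr{k})$-differential trivially (it only touches the $V$-factor and $\cl{A}$ commutes with $d_V$), so $\pi$ is a chain map; compatibility of $\pi$ with $\nu$ reduces to the corresponding compatibility on $V^\bul$, since $K$ acts diagonally and $\nu_N$ is by algebra automorphisms of $\cl{N}(\fr{k})$ induced from $\phi$. One must double-check that the tensor product over $\fr{k}$ is well-defined as a $K$-module, i.e.\ that the diagonal $K$-action descends to the coequalizer defining $\otimes_\fr{k}$; this follows because the right $\fr{k}$-action on $V^\bul$ and the left $\fr{k}$-action on $\cl{N}(\fr{k})$ are both $K$-equivariant for the conjugation/adjoint actions.

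Next I would check conditions (1)--(3) for $i$. Condition (1), that each $i_\xi$ is an $\cl{A}$-morphism, is immediate since $i_\xi(v\otimes n)=(-1)^p v\otimes i_\xi n$ leaves the $V$-slot (hence the $\pi$-action) untouched. Condition (2), $K$-equivariance of $i_\xi$ up to the $\rm{Ad}(k)$-twist, reduces to the $K$-equivariance of $i_\xi$ on $\cl{N}(\fr{k})$, again because $K$ acts diagonally and the sign $(-1)^p$ is $K$-invariant. Condition (3), the anticommutation $i_\xi i_\eta+i_\eta i_\xi=0$, follows from the same identity on $\cl{N}(\fr{k})$ once one tracks the Koszul signs: on $v\otimes n$ the two sign factors are $(-1)^p(-1)^p=1$, so the $V$-slot contributes nothing and we land exactly on $(i_\xi i_\eta+i_\eta i_\xi)(n)=0$ in $\cl{N}(\fr{k})$.

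The main obstacle is condition (4): for each $\xi\in\fr{k}$ one must show $d\,i_\xi+i_\xi\,d=\omega(\xi)$, where $\omega=\nu-\pi$, and here the two pieces of the total differential interact with the sign $(-1)^p$ in front of $i_\xi$ in a way that requires care. On an element $v\otimes n\in V^p\otimes_\fr{k}\cl{N}(\fr{k})^{k-p}$, expanding $d(i_\xi(v\otimes n))$ and $i_\xi(d(v\otimes n))$ using $d^k(v\otimes n)=d_V v\otimes n+(-1)^p v\otimes d_N n$ produces four terms; the two ``$d_V$'' terms cancel because $i_\xi$ has no effect on the $V$-slot and the signs $(-1)^p$ versus $(-1)^{p+1}$ are opposite, while the two ``$d_N$'' terms combine, via the known relation $d_N i_\xi+i_\xi d_N=\omega_N(\xi)$ on $\cl{N}(\fr{k})$, to give $v\otimes\omega_N(\xi)n$. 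It remains to identify this with $\omega(\xi)(v\otimes n)$. Since $K$ acts diagonally and $\cl{A}$ acts only on $V$, we have $\nu(\xi)(v\otimes n)=d\nu_V(\xi)v\otimes n+v\otimes d\nu_N(\xi)n$ and $\pi(\psi(\xi))(v\otimes n)=\pi_V(\psi(\xi))v\otimes n$, so $\omega(\xi)(v\otimes n)=\omega_V(\xi)v\otimes n+v\otimes\omega_N(\xi)n$ as stated just before the proposition. The point is then that $\omega_V(\xi)v\otimes n=0$ in $V^\bul\otimes_\fr{k}\cl{N}(\fr{k})$: by definition the right $\cl{U}(\fr{k})$-action on $V^\bul$ is $v\cdot\xi=-\omega_V(\xi^\iota)v=\omega_V(\xi)v$ for $\xi\in\fr{k}$ (since $\xi^\iota=-\xi$), so $\omega_V(\xi)v\otimes n=(v\cdot\xi)\otimes n=v\otimes(\xi\cdot n)=0$ because $\xi\in\fr{k}$ acts on $\cl{N}(\fr{k})^{k-p}\subset\cl{U}(\fr{k})\otimes\wedge^{\bullet}\fr{k}$ by $\psi(\xi)$-left-multiplication which is killed precisely by the tensor relation over $\fr{k}$ — wait, more precisely $\xi$ acts on $\cl{N}(\fr{k})$ by $\pi_N(\xi)$ and the defining relation of $\otimes_\fr{k}$ equates $v\cdot\xi\otimes n$ with $v\otimes\pi_N(\xi)n$; combined with the constraint $f\circ\pi_N(\xi)=\omega_V(\phi(\xi))\circ f$ built into the construction this forces the term $\omega_V(\xi)v\otimes n$ to equal $v\otimes\pi_N(\xi)n$, which is absorbed correctly. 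Once this identification is nailed down the computation closes and $(V^\bul\otimes_\fr{k}\cl{N}(\fr{k}),i)$ is verified to be an object of $\cl{C}(\cl{A},K)$; I would then remark that comparing this with the Hom-complex description gives the natural isomorphism $V^\bul\otimes_\fr{k}\cl{N}(\fr{k})\simeq\rm{Ind_h}(V^\bul)$, which is the reason both definitions are used interchangeably in later sections.
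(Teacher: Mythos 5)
Your overall route is the paper's intended one: a direct check of conditions (1)--(4) of Definition \ref{equiHCcx}, and most of it is sound. Conditions (1)--(3) are handled correctly (the observation that both applications of $i$ in (3) contribute $(-1)^p$ so the $V$-slot drops out is right), and in (4) your expansion is correct up to the point where $d\,i_\xi+i_\xi\,d$ is computed on $v\otimes n\in V^p\otimes_{\fr{k}}\cl{N}(\fr{k})^{k-p}$: the two $d_V$-terms cancel and the two $d_N$-terms combine to $v\otimes\omega_N(\xi)n$.

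The gap is in the one step that carries all the content: identifying $v\otimes\omega_N(\xi)n$ with $\omega(\xi)(v\otimes n)$. Since $K$ acts diagonally and $\cl{A}$ acts only on the $V$-slot, $\omega(\xi)(v\otimes n)=\omega_V(\xi)v\otimes n+v\otimes\nu_N(\xi)n$, so what must be shown is precisely $\omega_V(\xi)v\otimes n=-\,v\otimes\pi_N(\xi)n$, supplying the $-\pi_N$ half of $\omega_N=\nu_N-\pi_N$. Your write-up first asserts $\omega_V(\xi)v\otimes n=0$, which is false (the balanced tensor product does not kill this term, it converts it), and your subsequent correction lands on the wrong sign: from $v\cdot\xi=\omega_V(\xi)v$ you get $\omega_V(\xi)v\otimes n=+\,v\otimes\pi_N(\xi)n$, and with that sign condition (4) fails, since $d\,i_\xi+i_\xi\,d$ would then have to equal $v\otimes(\nu_N(\xi)+\pi_N(\xi))n$ rather than $v\otimes\omega_N(\xi)n$; saying the term ``is absorbed correctly'' does not resolve this, and the constraint $f\circ\pi_N(\xi)=\omega_V(\phi(\xi))\circ f$ you invoke belongs to the Hom-complex model $\rm{Ind_h}$, not to the tensor construction. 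The resolution is that the right $\cl{U}(\fr{k})$-module structure on $V^\bul$ must be the one obtained from $\omega_V$ via the principal anti-automorphism so that it is an honest right action, i.e.\ $v\cdot u=\omega_V(u^\iota)v$, hence $v\cdot\xi=-\omega_V(\xi)v$ for $\xi\in\fr{k}$ (note that with a literal extra minus sign the formula is not multiplicative on $\cl{U}(\fr{k})$). Then the balanced relation $v\cdot\xi\otimes n=v\otimes\pi_N(\xi)n$ gives exactly $\omega_V(\xi)v\otimes n=-\,v\otimes\pi_N(\xi)n$, hence $\omega(\xi)(v\otimes n)=v\otimes\omega_N(\xi)n$, and your computation of $d\,i_\xi+i_\xi\,d$ closes the verification. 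You should also record (one line) that $d$, the actions, and each $i_\xi$ descend to the quotient defining $\otimes_{\fr{k}}$, which holds because $d_N$ and $i_\xi$ are morphisms of left $\cl{U}(\fr{k})$-modules and all structures are $K$-equivariant.
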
  
  
  On morphisms,  tensoring with $\cl{N}(\fr{k})$ sends $f$ to $f\otimes 1$
  for every $v\otimes n\in V^\bul\otimes \cl{N}(\fr{k})$.
  One can check this defines a chain morphism.
    The construction is the same if we replace $\cl{N}(\fr{k})$ by any equivariant 
  $(\cl{U}(\fr{k}),K)$-complex, so we have in fact proved the following general theorem.
  
  \begin{thm}
  	Let $V^\bul\in\cl{C}_{w}(\cl{A},K)$ and $W^\bul\in\cl{C}(\cl{U}(\fr{k}), K)$.
	Take the right $\cl{U}(\fr{k})$-module structure on $V^\bul$ determined by $\omega_V$.  
	Then $V^\bul\otimes_{\fr{k}}W^\bul$ is in $\cl{C}(\cl{A},K)$ with actions defined by
	\begin{equation*}
		\begin{array}{rcll}
			\pi(a)(v\otimes w) &=& \pi_V(a)v\otimes w & \forall\; a\in\cl{A} \;\;\text{and}\\
			\nu(k)(v\otimes w) & = & \nu_V(k)v\otimes \nu_W(k)w & \forall\; k\in K
		\end{array}
	\end{equation*}
	for all $v\otimes w\in V^p\otimes W^{k-p}$ and homotopies $i_\xi(v\otimes w) = 
	(-1)^{p-1}v\otimes i_\xi w$ for all $\xi\in\fr{k}$.  
  \end{thm}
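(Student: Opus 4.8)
The plan is to verify directly that the data described — the underlying complex $V^\bul\otimes_\fr{k}W^\bul$ with its differential, the $\cl{A}$-action $\pi$, the $K$-action $\nu$, and the degree $-1$ endomorphisms $i_\xi$ — satisfy all the axioms in Definition \ref{equiHCcx} for an equivariant $(\cl{A},K)$-complex, together with the weak $(\cl{A},K)$-module axioms (1)--(3) on each term. Since $W^\bul$ carries the trivial $\cl{A}$-action and $V^\bul$ carries the trivial $\fr{k}$-action coming from $\omega$, the checks decouple: the $\cl{A}$-module structure lives entirely on the $V$-factor, the homotopy $i$ lives entirely on the $W$-factor, and the two interact only through the $K$-action. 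So the verification is a bookkeeping exercise in signs coming from the double-complex grading.

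First I would record that $V^\bul$ is a right $\cl{U}(\fr{k})$-module via $v\cdot u = -\omega_V(u^\iota)v$, and that $W^\bul$ is a left $\cl{U}(\fr{k})$-module via $\omega_W$, so the balanced tensor product $V^p\otimes_\fr{k}W^{k-p}$ is well-defined; the total complex is the usual product-total-complex of the double complex, with differential $d^k(v\otimes w) = d_Vv\otimes w + (-1)^p v\otimes d_Ww$, and $d^2=0$ is the standard double-complex computation. Next I would check that $\pi(a)(v\otimes w)=\pi_V(a)v\otimes w$ is well-defined over $\cl{U}(\fr{k})$ — this is where one uses that $\pi_V$ commutes with $\omega_V=\nu_V-\pi_V$ up to the bracket relation, so that $a$-multiplication passes through the tensor; then that each term is a weak $(\cl{A},K)$-module (axiom (3) of the weak definition follows from the corresponding compatibilities on $V^\bul$ and $W^\bul$ and the diagonal $K$-action). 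The relation $\omega(\xi)(v\otimes w) = v\otimes\omega_W(\xi)w$ is immediate from $\nu(k)(v\otimes w) = \nu_V(k)v\otimes\nu_W(k)w$ together with $\pi(\xi)(v\otimes w)=\pi_V(\xi)v\otimes w$ and the identity $-\omega_V(\xi^\iota)=\omega_V(\xi)$ used to rewrite the $V$-factor contribution, combined with the fact that on $V^\bul\otimes_\fr{k}W^\bul$ the $\fr{k}$-actions on the two factors are identified.

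Then I would verify the four conditions on $i_\xi(v\otimes w)=(-1)^{p-1}v\otimes i_\xi^W w$: condition (1) that $i_\xi$ is an $\cl{A}$-morphism is clear since $\cl{A}$ acts only on $V$; condition (2) that $i_\xi$ is $K$-equivariant for the conjugation action reduces to $K$-equivariance of $i_\xi^W$ and the fact that $\nu$ is diagonal; condition (3), $i_\xi i_\eta + i_\eta i_\xi = 0$, follows from the same relation for $i^W$ after the signs $(-1)^{p-1}$ are tracked through (the degree shifts by $1$ under one application, so the two sign factors combine correctly); condition (4), $di_\xi + i_\xi d = \omega(\xi)$, is the crux — one expands $d(v\otimes w)$ and $i_\xi(v\otimes w)$, uses $d^Wi_\xi^W + i_\xi^W d^W = \omega_W(\xi)$ on the $W$-factor, and checks that the cross-terms $d_Vv\otimes i_\xi^W w$ cancel against $i_\xi^W(d_Vv\otimes w)$-type terms by the sign $(-1)^{p-1}$; the net result is $v\otimes\omega_W(\xi)w = \omega(\xi)(v\otimes w)$. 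Finally I would note that $f\mapsto f\otimes 1$ is a chain map and strictly commutes with the $i_\xi$ (again by the sign convention), so tensoring with $W^\bul$ is a functor $\cl{C}_w(\cl{A},K)\to\cl{C}(\cl{A},K)$. I expect condition (4) to be the main obstacle — not conceptually, but because it is the only place where the differential, the homotopy, and the sign convention $(-1)^{p-1}$ all have to conspire, and getting the sign on $i_\xi$ exactly right (rather than off by $(-1)^p$) is precisely what makes $di_\xi+i_\xi d$ land on $\omega(\xi)$ with no residual sign; the earlier propositions on $\rm{Ind_h}$ and on $V^\bul\otimes_\fr{k}\cl{N}(\fr{k})$ are the special case $W^\bul=\cl{N}(\fr{k})$, so once the general sign calculus is pinned down those follow for free.
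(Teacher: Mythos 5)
Your proposal is correct and is essentially the paper's own argument: the paper offers no separate proof of this theorem, asserting only that the term-by-term verification already indicated for $V^\bul\otimes_\fr{k}\cl{N}(\fr{k})$ goes through verbatim with $\cl{N}(\fr{k})$ replaced by an arbitrary $W^\bul\in\cl{C}(\cl{U}(\fr{k}),K)$, which is exactly the direct check of the weak-module axioms and conditions (1)--(4) that you outline, with condition (4) as the only delicate point. Your emphasis on pinning down the sign in $i_\xi$ is well placed, since the paper itself wavers between $(-1)^p$ (in the $\cl{N}(\fr{k})$ proposition) and $(-1)^{p-1}$ (in the theorem), and only one choice, matched against the convention for the right $\cl{U}(\fr{k})$-structure via $\omega_V$, makes $di_\xi+i_\xi d$ come out to $\omega(\xi)$ with no residual sign.
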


\noindent Define $\rm{Ind'_h}(\text{ - }) := \text{ - }\otimes_{\cl{U}(\fr{k})}\cl{N}(\fr{k})[-d_K]$ as a functor from
$\cl{C}_{w}(\cl{A},K)$ to $\cl{C}(\cl{A},K)$. 
 \begin{prop}
     The functor $\rm{Ind'_h}$ is naturally isomorphic to $\rm{Ind_h}$. 
  \end{prop}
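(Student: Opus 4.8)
The plan is to exhibit an explicit natural isomorphism $\mathrm{Ind'_h}(V^\bul) = V^\bul\otimes_{\cl{U}(\fr{k})}\cl{N}(\fr{k})[-d_K] \xrightarrow{\;\sim\;} \mathrm{Ind_h}(V^\bul) = \mathrm{Hom}^\bul(\cl{N}(\fr{k}), V^\bul)$, coming from the self-duality (up to shift and twist) of the Koszul complex $\cl{N}(\fr{k})$ over $\cl{U}(\fr{k})$. First I would recall that $\cl{N}(\fr{k})^{-k}=\cl{U}(\fr{k})\otimes_\CC\wedge^k\fr{k}$, so as a graded $\cl{U}(\fr{k})$-module $\cl{N}(\fr{k})$ is finitely generated free in each degree and concentrated in degrees $-d_K,\dots,0$. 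The standard pairing $\wedge^k\fr{k}\otimes\wedge^{d_K-k}\fr{k}\to\wedge^{d_K}\fr{k}\cong\CC$ (after choosing a generator of the top exterior power, which is $K$-equivariant only up to the modular character of $K$ on $\wedge^{\mathrm{top}}\fr{k}$, but $K$ being a subgroup of a reductive group acting on its own Lie algebra, this character is trivial on the identity component and one checks it causes no trouble here) gives an isomorphism of graded $\cl{U}(\fr{k})$-modules $\cl{N}(\fr{k})\cong \mathrm{Hom}_{\cl{U}(\fr{k})}(\cl{N}(\fr{k}),\cl{U}(\fr{k}))[d_K]$ that intertwines the Koszul differential with its transpose up to sign. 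This is the classical fact that the Koszul resolution of $\CC$ over $\cl{U}(\fr{k})$ is self-dual.

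Next I would use this to rewrite $\mathrm{Hom}^\bul(\cl{N}(\fr{k}),V^\bul)$. Since each $\cl{N}(\fr{k})^{p}$ is a free $\cl{U}(\fr{k})$-module of finite rank, there is a natural isomorphism of complexes
\begin{equation*}
\mathrm{Hom}^\bul_\CC(\cl{N}(\fr{k}),V^\bul)\;\cong\; V^\bul\otimes_{\cl{U}(\fr{k})}\mathrm{Hom}_{\cl{U}(\fr{k})}(\cl{N}(\fr{k}),\cl{U}(\fr{k})),
\end{equation*}
where on the right $V^\bul$ carries the right $\cl{U}(\fr{k})$-module structure coming from $\omega_V$ (precisely the one fixed in the statement of the preceding theorem, $u\cdot v = -\omega_V(u^\iota)v$), and the sign conventions on the differential of the Hom-complex match the double-complex differential on the tensor product. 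Composing with the self-duality isomorphism from the previous step turns the right-hand side into $V^\bul\otimes_{\cl{U}(\fr{k})}\cl{N}(\fr{k})[d_K]$... and here I must be careful about the direction of the shift: the Hom-complex $\mathrm{Hom}^\bul(\cl{N}(\fr{k}),-)$ places the degree-$0$ part of $\cl{N}(\fr{k})$ in homological degree $0$ and the degree $-d_K$ part in homological degree $+d_K$, so after dualizing one lands in $V^\bul\otimes_{\cl{U}(\fr{k})}\cl{N}(\fr{k})[d_K]$, and then $\mathrm{Ind'_h}$ inserts the compensating $[-d_K]$, yielding exactly $\mathrm{Ind_h}$ with no residual shift. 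I would then verify that under this identification the $\cl{A}$-action ($\pi_V$ on the $V$-factor), the diagonal $K$-action (which on the Hom side is $\nu_V(k)\circ f\circ\nu_N(k^{-1})$, matching $\nu_V\otimes\nu_N$ after dualizing, using that $\cl{N}(\fr{k})$ is a self-dual $K$-complex up to the trivial modular twist), and the homotopy $i$ all correspond: $i_\xi f = (-1)^{k-1}f\circ i_\xi$ on the Hom side should match $i_\xi(v\otimes n)=(-1)^{p-1}v\otimes i_\xi n$ on the tensor side, since the Koszul $i_\xi$ (wedging with $\psi(\xi)$) is, up to sign, its own transpose under the self-duality pairing. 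Naturality in $V^\bul$ is then immediate because every map in sight ($f\mapsto f\otimes 1$ on morphisms, the evaluation/duality maps) is natural.

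The main obstacle I expect is bookkeeping the signs and shifts consistently: the Hom-complex differential carries the sign $d^k(f)=d_W f-(-1)^k f d_V$, the tensor double complex carries $(-1)^p$ on the second factor, the self-duality of $\cl{N}(\fr{k})$ introduces its own sign in matching Koszul $d$ with its transpose, and the homotopy $i$ has the $(-1)^{k-1}$ versus $(-1)^{p-1}$ discrepancy — getting all of these to cancle requires a careful choice of the duality pairing and of how the generator of $\wedge^{\mathrm{top}}\fr{k}$ is threaded through, together with the identification $p + (k-p) = k$ of bidegrees. A clean way to organize this is to prove the statement first for $W^\bul=\cl{N}(\fr{k})$ replaced by a single free module $\cl{U}(\fr{k})\otimes_\CC E$ (finite-dimensional $K$-module $E$), where $\mathrm{Hom}^\bul(\cl{U}(\fr{k})\otimes E,V)\cong V\otimes_{\cl{U}(\fr{k})}(\cl{U}(\fr{k})\otimes E^*)$ with all structures visibly matching, and then assemble the total complex $\cl{N}(\fr{k})$ degree by degree, checking that the Koszul differential and the homotopy $i_\xi$ are sent to their counterparts under $E=\wedge^k\fr{k}\mapsto E^*\cong\wedge^{d_K-k}\fr{k}$. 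Once the module-level isomorphism and its compatibility with $d$, $\pi$, $\nu$, $i$ are established, naturality and the conclusion $\mathrm{Ind'_h}\cong\mathrm{Ind_h}$ follow formally.
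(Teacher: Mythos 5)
Your plan is essentially the paper's own argument: the paper likewise builds the isomorphism from the pairing $\wedge^p\fr{k}\times\wedge^{d_K-p}\fr{k}\to\wedge^{d_K}\fr{k}\simeq\CC$ (using reductivity of $\fr{k}$ to trivialize the top wedge), obtaining $\Phi^k\colon (V^\bul\otimes\cl{N}(\fr{k}))^{k-d_K}\tilde{\to}\rm{Hom}^k(\cl{N}(\fr{k}),V^\bul)$, and then checks compatibility with the $(\cl{A},K)$-actions, with the homotopy via $(\xi\wedge\tau)^*=(-1)^{p+1}\tau^*\circ i_\xi$, and with the Koszul differentials --- exactly the sign bookkeeping you outline. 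The only slip is notational: in the paper's shift convention the dualization lands you in $V^\bul\otimes_{\fr{k}}\cl{N}(\fr{k})[-d_K]$ rather than $[d_K]$, which is precisely $\rm{Ind'_h}(V^\bul)$, so your conclusion of ``no residual shift'' is the correct one.
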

\begin{proof}
	
	Since $\fr{k}$ is reductive,  $\wedge^{d_K}\fr{k} = \CC$, the trivial representation of $\fr{k}$. 
	Therefore, the natural pairing
	\begin{equation*}
		\wedge^p\fr{k} \times \wedge^{d_K-p}\fr{k}\to \wedge^{d_K}\fr{k}
	\end{equation*}
	determines an isomorphism $ \wedge^{d_K-p}\fr{k}\tilde{\to}(\wedge^p\fr{k})^*$
	of $\fr{k}$-modules.  For $\tau\in \wedge^{d_K-p}\fr{k}$, let 
	$\tau^*\in(\wedge^p\fr{k})^*$ denote the linear map $\tau^*(\text{ - }) 
	= \text{ - }\wedge\tau$. Then, for any $k\in \ZZ$, this extends to a vector space isomorphism
	\begin{equation*}
		\Phi^k: (V^\bul\otimes \cl{N}(\fr{k}))^{k-d_K} \tilde{\to}
	 	\rm{Hom}^k(\cl{N}(\fr{k}),V^\bul).
	\end{equation*}
	Clearly, the $(\cl{A},K)$-module structure commutes with the vector space isomorphism.  
	Note also, since $(\xi\wedge\tau)^* = (-1)^{p+1}\tau^*\circ i_\xi$,
	the homotopy $i_\xi$ commutes with the isomorphism for all $\xi\in \fr{k}$.  
	That is, for $v\otimes\tau\in V^{k-p}\otimes \wedge^{d_K-p}\fr{k}$, we have
	\begin{equation*}
		\Phi^k(	i_\xi(v\otimes\tau) ) = i_\xi(v\otimes \tau^*).
	\end{equation*} 

	The  isomorphism $\Phi$ also commutes with differentials.
	Observe for all $(v_{k-p}\otimes\tau_{d_K-p})\in \prod_p V^{k-p}\otimes\wedge^{d_K-p}\fr{k}$,
	the compositions $d^k\Phi^k (v_{k-p}\otimes\tau_{d_K-p}) $ and 
	$\Phi^{k+1}d^{k-d_K}(v_{k-p}\otimes \tau_{d_K-p}) $ are equal if and only if 
	\begin{equation*}
		(-1)^p (d_N^{d_K-(p-1)}\tau_{d_K-(p-1)})^* = \tau_{d_K-(p-1)}^*\circ d_N^p
	\end{equation*}
	for all $p$.  Expand $\tau = \tau_{d_K-(p-1)} = \xi_1\wedge\ldots\wedge\xi_{d_K-(p-1)}$ and 
	let $\tau^\vee = \zeta_1\wedge\ldots\wedge\zeta_{p-1}$ be its complement. 
	For $\zeta\wedge d\tau$ and $d\zeta\wedge\tau$ to be nonzero, (up to sign)
	either $\zeta = \tau^\vee \wedge \xi_i$ for some $1\leq i\leq d_K-(p-1)$ or 
	$\zeta =\zeta_1\wedge\ldots\hat{\zeta_s}\ldots\wedge\zeta_{p-1}
	\wedge \xi_i\wedge\xi_j$ for some 
	$1\leq i<j\leq d_K-(p-1)$ and $1\leq s\leq p-1$.  One can verify explicitly that in either
	of these cases, we do in fact have the equality 
	\begin{equation*}
		(-1)^p (d_N^{d_K-(p-1)}\tau)^*(\zeta) = \tau^* (d_N^p(\zeta)).
	\end{equation*}
	Therefore, for any $V^\bul\in\cl{C}_{w}(\cl{A},K)$, there is an isomorphism of  
	 equivariant $(\cl{A},K)$-complexes
$		\rm{Hom}^\bul(\cl{N}(\fr{k}),V^\bul)\simeq V^\bul\otimes_\fr{k}\cl{N}(\fr{k})[-d_K]$
	which is transparently functorial in $V^\bul$. 
	  \end{proof}

%%%%
\subsection{Equivariant Zuckerman Functor}
%%%%

For $T\subset K$ a closed subgroup, there is a restriction functor 
$\rm{Res}_T^K$ from  $\rm{D}(\cl{A}, K)$ to $\rm{D}(\cl{A}, T)$,
given simply by restricting  the $K$-action on any object to $T$.  
Pand\v zi\'c gives an explicit construction of the right adjoint to this functor 
in \cite{p}, which he calls the \emph{equivariant Zuckerman functor} and denotes by 
$\G^\rm{equi}_{K,T}$.  We recall the construction here.  

Take the standard complex $\cl{N}(\fr{k})$ as an object of $\cl{C}(\fr{k},T)$ via
the restriction functor.  Let $R(K)$ be the ring of regular functions of $K$.  Then,
for any $V^\bul\in\cl{C}(\cl{A},T)$, we have 
\begin{equation*}
	R(K)\otimes_\CC V^\bul\in\cl{C}(\fr{k},T)
\end{equation*}
with the $(\fr{k},T)$-actions, denoted by $(\lambda_\fr{k}, \lambda_T)$ respectively. 
These actions are defined  for all  $k\in K$ and  $F\in R(K)\otimes_\CC V^\bul$ by 
      \begin{equation*}
         \begin{array}{rcll}
            (\lambda_\fr{k}(\xi)F )(k)&=& \pi_V(\xi) F(k) + L_\xi F (k) &\forall \xi\in\fr{k}, \\
           ( \lambda_T(t)F)(k) &=& \nu_V(t)F(kt) & \forall t\in T,\\
         \end{array}
      \end{equation*}
      where $(\pi_V,\nu_V)$ denote the $(\fr{k},T)$-actions on $V^\bul$, and the 
      $\fr{k}$-action is extended to $\cl{U}(\fr{k})$.  The homotopy $i$ is that for $V^\bul$.
          There is a commuting $(\cl{A},K)$-action on $R(K)\otimes_\CC V^\bul$, denoted
      by $(\pi_\G,\nu_\G)$ and defined for all $F\in R(K)\otimes V^\bul$ and $k\in K$ by
      \begin{equation*}
         \begin{array}{rcll}
            (\pi_\G(a)F)(k) &=& \pi_V(\phi(k)a)F(k), &\forall a\in \cl{A} \\
           (\nu_\G(h)F)(k) &=& F(h^{-1}k) & \forall h\in K.\\
            \end{array}
      \end{equation*}

Define the equivariant Zuckerman functor on an object $V^\bul\in\cl{C}(\cl{A},T)$ to be 
\begin{equation*}
    \G^\rm{equi}_{K,T}(V^\bul) = 
    \rm{Hom}^\bul(\cl{N}(\fr{k}), R(K)\otimes V^\bul)^{(\fr{k},T)},
\end{equation*}
with the Hom-complex taken in $\cl{C}_{w}(\cl{A},K)$, then taking $(\fr{k},T)$-invariants.  
      The $(\cl{A},K)$-action on 
      $\G^\rm{equi}_{K,T}(V^\bul)$ is denoted
      by $(\pi,\nu)$ and defined for all $f\in \G^\rm{equi}_{K,T}(V^\bul)$, 
      $n\in\cl{N}(\fr{k})$, and $k\in K$ as
      \begin{equation}
         \begin{array}{rcll}
            (\pi(a)f)(n)(k) &=&( \pi_\G(a)f(n))(k)=\pi_V(\phi(k)a)f(n)(k), &\forall a\in \cl{A} \\
           (\nu(h)f)(n)(k) &=& (\nu_\G(h)f(\nu_N(h^{-1})n))(k) = f(\nu_N(h^{-1})n)(h^{-1}k) & \forall k'\in K.
         \end{array}
      \end{equation}
      The homotopy $i_\xi$ acts on a morphism $f$ in degree $\ell$ by
      \begin{equation*}
      	(i_\xi f)(n)(k) = (-1)^{\ell+1} f(i_\xi n)(k)
      \end{equation*}
      for every $n\in \cl{N}(\fr{k})$, as in the definition of $\rm{Ind_h}$.

%%%%%%%%%%%%%%%%%%%%%%%%%%%%%%%%%%%%%%%%%%%%%
%%%%%%%%%%%%%%%%%%%%%%%%%%%%%%%%%%%%%%%%%%%%%

%%%%%%
\section{Equivariant Harish-Chandra Sheaves}
%%%%%%

The main technical construction required for the proof of Theorem \ref{mainthm}.\
is that of the \emph{geometric Zuckerman functor}.  This is the localization of the 
equivariant Zuckerman functor to the derived equivariant $\ms{D}$-module categories on
generalized flag varieties.  In this section, we define our categories of interest and construct the
geometric Zuckerman functor from the basic $\ms{D}$-module functors of \S 2.

   The category of (left) $G$-equivariant $\ms{O}_X$-modules is denoted 
   by $\cl{M}_G(X)$. The following theorem is well known:
   
   \begin{thm}\label{equieq}
	   If $G$ acts on $X$ freely, there is an equivalence of categories
	   $\cl{M}_G(X) \simeq \cl{M}(X/G).$
   \end{thm}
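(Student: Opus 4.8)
The plan is to reduce the statement to a standard descent argument for quasi-coherent sheaves along the faithfully flat quotient map $\pi\colon X\to X/G$, using the extra rigidity provided by the $G$-equivariant structure. First I would set $Y=X/G$ and recall that when $G$ acts freely (and, as is implicit here, the quotient $\pi\colon X\to Y$ exists as a geometric quotient and is a $G$-torsor, hence faithfully flat), we have the two projections $p_1,p_2\colon G\times X\to X$, where $p_1$ is the projection and $p_2=a$ is the action map. Freeness means the map $(p_1,a)\colon G\times X\to X\times_Y X$ is an isomorphism, so the Čech nerve of $\pi$ is identified with the simplicial scheme $[n]\mapsto G^{n}\times X$. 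Under this identification, the datum of a $G$-equivariant structure on an $\ms{O}_X$-module $\ms{F}$ — an isomorphism $a^*\ms{F}\simeq p_1^*\ms{F}$ satisfying the cocycle condition on $G\times G\times X$ — is exactly a descent datum for $\ms{F}$ relative to $\pi$.

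The key steps, in order, are then: (1) construct the functor $\pi^*\colon \cl{M}(Y)\to \cl{M}_G(X)$, observing that for $\ms{G}$ on $Y$ the pullback $\pi^*\ms{G}$ carries a canonical $G$-equivariant structure coming from $\pi\circ a = \pi\circ p_1$; (2) construct the functor in the other direction sending an equivariant sheaf $(\ms{F},\alpha)$ to the subsheaf of $G$-invariants $(\pi_*\ms{F})^G$ on $Y$, equivalently the sheaf glued from the descent datum $\alpha$; (3) produce the unit and counit natural transformations $\ms{G}\to (\pi_*\pi^*\ms{G})^G$ and $\pi^*(\pi_*\ms{F})^G\to \ms{F}$; and (4) check these are isomorphisms. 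For step (4) the point is local on $Y$: since $\pi$ is a $G$-torsor it is Zariski-locally (or at worst étale-locally, but for an algebraic group a torsor is even Zariski-locally trivial over a smooth base in characteristic $0$, or one argues fppf-locally and then uses fppf descent for the isomorphism statement) of the form $G\times U\to U$, and over such a trivializing open the claim is the trivial computation that equivariant sheaves on $G\times U$ for the translation action are just sheaves on $U$ pulled back. Faithfully flat descent (Grothendieck) then glues the local isomorphisms. One must also note that $\pi^*$ is exact (it is flat) and that $(\pi_*(-))^G$ is exact on equivariant sheaves — this is where freeness is essential, as it kills the higher group cohomology that would otherwise obstruct exactness — so the equivalence is an equivalence of abelian categories, not merely of derived categories.

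The main obstacle I anticipate is not the categorical bookkeeping but making precise the geometric hypotheses hidden in the phrase ``$G$ acts on $X$ freely,'' namely that a geometric quotient $X/G$ exists, is a scheme (or algebraic space), and that $\pi$ is a $G$-torsor in a topology fine enough to run descent. In the generality of this paper $X$ is a (partial) flag variety or an orbit therein and $G$ is reductive, so in the applications $\pi$ will in fact be a locally trivial fibration and everything is clean; I would either cite this as ``well known'' — as the paper already does — pointing to a standard reference such as Mumford's GIT or the treatment of equivariant sheaves in the appendix of a $\ms{D}$-module text, or give the one-line local-triviality argument and then invoke faithfully flat descent of quasi-coherent sheaves verbatim. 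Thus the realistic scope of a proof here is: reduce to a local trivialization, do the trivial local case, and cite descent; the only genuinely content-bearing remark is the exactness of invariants under the freeness hypothesis.
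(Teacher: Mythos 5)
Your argument is correct, but there is nothing in the paper to compare it against: the paper simply records this statement as ``well known'' and gives no proof, later relying on the concrete instances in \cite{bbjantzen} (the equivalences $q_*^K \dashv q^\circ$ for the free $K$-action on $K\times X$), where the quotient map is a globally trivial $G$-bundle and the claim is immediate. Your descent argument is the standard proof of the general fact: the freeness (plus the implicit hypothesis that $X\to X/G$ is a $G$-torsor) identifies $G\times X\simeq X\times_{X/G}X$, so an equivariant structure is literally a descent datum, and $\pi^*$ together with $(\pi_*(-))^G$ are quasi-inverse, checked locally over a trivializing cover and glued by faithfully flat descent. Two small remarks: first, you are right that the substantive hypothesis is the existence and local triviality of the quotient, and in every use the paper makes of this result the torsor is trivial ($q:K\times X\to X$ or $K\to K/S$ with $K$ algebraic over $\CC$), so that issue evaporates in practice; second, your aside attributing exactness of $(\pi_*(-))^G$ to the vanishing of higher group cohomology is slightly misplaced --- exactness here is automatic once the functor is exhibited as a quasi-inverse in an equivalence of abelian categories (locally it is just the identification of sheaves on $G\times U$ with sheaves on $U$), and no representation-theoretic input about $G$ is needed. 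With that caveat, your proposal proves more than the paper asks of this statement and is a perfectly good substitute for the missing citation.
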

\noindent For homogeneous spaces, we can make a stronger statement.  Let $B$ be a complex linear
group, let $\cl{R}ep(B)$ be the category of algebraic representations of $B$.
	\begin{thm}
		If $X=G/B$, there is an equivalence of categories
	$		\cl{M}_G(X)\simeq \cl{R}ep(B).$
	\end{thm}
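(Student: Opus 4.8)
The plan is to construct an explicit equivalence between $\cl{M}_G(G/B)$ and $\cl{R}ep(B)$ via the fiber at the base point. First I would fix the base point $x_0 \in X = G/B$ with stabilizer $B$, and define the functor $\Phi: \cl{M}_G(X) \to \cl{R}ep(B)$ sending a $G$-equivariant sheaf $\ms{F}$ to its geometric fiber $T_{x_0}\ms{F} = \ms{F}_{x_0}/\fr{m}_{x_0}\ms{F}_{x_0}$; the $G$-equivariant structure restricts to an action of the stabilizer $B$ on this fiber, and algebraicity of the equivariant structure guarantees the $B$-action is algebraic, so $\Phi$ is well-defined. In the reverse direction, given a representation $(V,\sigma)$ of $B$, I would form the associated sheaf $G \times_B V$, i.e. the sheaf of sections of the vector bundle $G \times_B V \to G/B$, which carries a canonical $G$-equivariant structure from left translation on the $G$-factor. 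Call this functor $\Psi$.

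The key steps, in order: (1) check $\Phi$ and $\Psi$ are well-defined functors landing in the stated categories — the only non-formal point is algebraicity, which follows from the definition of algebraic $G$-equivariant structure restricted to the closed subgroup $B$; (2) construct a natural isomorphism $\Phi \circ \Psi \simeq \mathrm{id}_{\cl{R}ep(B)}$, which is immediate since the fiber of $G \times_B V$ over $x_0$ is canonically $V$ with its $B$-action; (3) construct a natural isomorphism $\Psi \circ \Phi \simeq \mathrm{id}_{\cl{M}_G(X)}$ — given $\ms{F}$, there is a canonical evaluation map $G \times_B (T_{x_0}\ms{F}) \to \ms{F}$ coming from translating the fiber around by the $G$-action, and one must check this is an isomorphism of $G$-equivariant sheaves. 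For step (3) the argument is: the claim is local on $X$, and since $X$ is a homogeneous space one can cover it by $G$-translates of a neighborhood of $x_0$; on such a neighborhood, $G$-equivariance forces $\ms{F}$ to be determined by its fiber, so the evaluation map is an isomorphism there, hence everywhere. Compatibility with the $G$-equivariant structures is then a routine diagram chase.

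The main obstacle I expect is step (3), specifically verifying that the canonical map $G \times_B (T_{x_0}\ms{F}) \to \ms{F}$ is an isomorphism and doing so cleanly. The cleanest route is probably to invoke Theorem \ref{equieq} (the free-action case) after passing to a suitable cover: write $X = G/B$ and note that $G \to G/B$ realizes $G$ as a $B$-torsor over $X$, so pulling $\ms{F}$ back to $G$ and using that $B$ acts freely on $G$, Theorem \ref{equieq} identifies $B$-equivariant sheaves on $G$ with sheaves on $X$; combining with the observation that $G$-equivariant sheaves on $G$ (for the left $G$-action) are just vector spaces, one gets that $(G\times B)$-equivariant sheaves on $G$ — equivalently $G$-equivariant sheaves on $G/B$ — are identified with $B$-representations. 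Care is needed to keep the two commuting group actions (left $G$, right $B$) straight and to ensure the equivalence of Theorem \ref{equieq} is compatible with the residual action; but this is bookkeeping rather than a genuine difficulty. I would present the proof in the torsor form, as it is shortest and reuses Theorem \ref{equieq} directly.
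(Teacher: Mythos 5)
The paper itself offers no proof of this statement: it is recorded as a well-known fact immediately after Theorem \ref{equieq}, so there is no argument of the author's to compare yours against. Judged on its own, your proposal is essentially the standard proof and is correct in outline. Your second, torsor-style formulation is the cleaner of the two and is the one worth writing down: $G\to G/B$ is a $B$-torsor, so Theorem \ref{equieq} (applied to the free right $B$-action on $G$) identifies $\cl{M}(G/B)$ with $B$-equivariant sheaves on $G$; carrying the commuting left $G$-action through this identification, $G$-equivariant sheaves on $G/B$ become $(G\times B)$-equivariant sheaves on $G$; and applying Theorem \ref{equieq} once more to the free transitive left $G$-action on $G$ collapses these to $B$-equivariant sheaves on a point, i.e.\ to $\cl{R}ep(B)$, with the fiber at the identity coset realizing the forward functor and $V\mapsto$ sections of $G\times_B V$ the quasi-inverse, exactly as in your first formulation.

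Two caveats you should make explicit. First, the argument (and the statement) requires $\cl{M}_G(X)$ to mean \emph{quasi-coherent} $G$-equivariant $\ms{O}_X$-modules; the localization step hidden in Theorem \ref{equieq} is flat descent along $G\to G/B$, and for arbitrary $\ms{O}_X$-modules the claim fails. Related to this, your step (3) in the fiber-functor version --- ``$G$-equivariance forces $\ms{F}$ to be determined by its fiber on a neighborhood'' --- is not an argument as stated when $\ms{F}$ has infinite-dimensional fibers; it is precisely the descent statement you are trying to prove, so you should route everything through the torsor picture rather than treat that sentence as a proof. Second, the compatibility of the equivalence of Theorem \ref{equieq} with the residual commuting action is where the actual content of the ``bookkeeping'' lies (one must check that the descent isomorphisms intertwine the two equivariant structures); it is routine but should be stated as a lemma rather than waved at, since it is used twice. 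With those points tightened, your write-up would be a complete proof of the fact the paper takes for granted.
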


%%%%%
\subsection{Group Actions on Sheaves}
%%%%%

	Let $e_k: X\to K\times X$ be the map sending $x\mapsto (k,x)$.  Then
	if $\ms{F}$ is $K$-equivariant with structure isomorphism $\phi: \mu^*\ms{F}\tilde{\to}
	\pi^*\ms{F}$, pulling back along $e_k$ induces an isomorphism
$		e_k^*(\phi): s_k^*\ms{F}\tilde{\to} \ms{F},$
	where $s_k$ is the automorphism of $X$ given by $x\mapsto kx$.  
	In this way, we map $k\in K$ to the isomorphism
	\begin{equation*}
		e_k^*(\phi)\in \prod_{k\in K}\rm{Isom}(s_k^*(\ms{F}),\ms{F}),
	\end{equation*}
	and thus obtain an action of $K$ on sections.  For a local section 
	 $f\in\ms{F}$, define the action of $k\in K$ on $f$ to be
	the local section of $\ms{F}$ determined by 
$		\nu(k)f := e_k^*(\phi)^{-1}(f).  $
	One differentiates this action to obtain a corresponding Lie algebra action of $\fr{k}$ 
	on the sheaf $\ms{F}$.  The general construction for Lie algebra actions on sheaves
	is given in  \cite{bbjantzen}.

%%%%%%%
\subsection{Harish-Chandra Sheaves}
%%%%%%%

Let $(\fr{g},K)$ be a Harish-Chandra pair, let $X$ be a generalized flag variety for
 $\fr{g}$, and let $\ms{D}_\lambda$ be a homogeneous twisted sheaf 
of differential operators on $X$.  
   A \emph{weak Harish-Chandra sheaf} for the pair $(\ms{D}_\lambda, K)$ 
   is a quasi-coherent $\ms{D}_\lambda$-module $\ms{V}$  with a $K$-equivariant 
   $\ms{O}_X$-module structure such that the action of 
   $\ms{D}_\lambda$ is  $K$-equivariant.  
   A weak Harish-Chandra sheaf is a \emph{Harish-Chandra sheaf}
   if additionally the differential of the $K$-action on $\ms{V}$
      agrees with the action of $\fr{k}$ induced by $\ms{D}_\lambda$.  

A morphism of weak Harish-Chandra sheaves is a $\ms{D}_\lambda$-module
homomorphism which respects the underlying $K$-equivariant structure.   
As with weakly equivariant Harish-Chandra modules, we will use $\cl{M}_{w}(\ms{D}_\lambda,K)$
to denote the category of weak Harish-Chandra sheaves and $\cl{M}(\ms{D}_\lambda, K)$
for the category of Harish-Chandra sheaves.  
There is an equivalence of categories for $\lambda$ anti-dominant and regular 
\begin{equation*}
    \xymatrix{
        \cl{M}_{(w)}(\ms{D}_\lambda,K)\ar@<.5ex>[r]^>>>>>{\G} & 
        \cl{M}_{(w)}(\G(X,\ms{D}_\lambda), K). \ar@<.5ex>[l]^>>>>>>{\Delta_\lambda}
    }
\end{equation*}
We construct the derived equivariant Harish-Chandra
 sheaf category in the same way as the derived equivariant Harish-Chandra module category.  
\begin{defn}
   An \emph{equivariant Harish-Chandra sheaf} is a pair $(\ms{V}^\bul,i)$ 
   with $\ms{V}^\bul$ a complex of weak Harish-Chandra sheaves, and $i$ 
   a linear map from $\fr{k}$ to graded linear degree $-1$ endomorphisms 
   of $\ms{V}^\bul$ satisfying:
   \begin{enumerate}
      \item The $i_\xi$ are $\ms{D}_\lambda$-morphisms for all $\xi\in\fr{k}$.
      \item The map $i: \fr{k}\to \rm{Hom}_{\ms{D}_\lambda}(\ms{V}^\bul,\ms{V}^\bul[-1])$  
      is $K$-equivariant; that is, for all $k\in K$,
      \begin{equation*}
      	\nu(k)\circ i_\xi\circ \nu(k^{-1}) = i_{\rm{Ad}(k)\xi}.
      \end{equation*}
      \item For all $\eta,\xi\in\fr{k}$, the sum $i_\xi i_\eta + i_\eta i_\xi$ vanishes.
      \item There is the equality $di_\xi + i_\xi d = \omega(\xi)$, where $\omega=\nu-\pi$ and $\pi$ 
      is the action of $\fr{k}$ induced from $\ms{D}_\lambda$.   
   \end{enumerate}
\end{defn}
Define the Hom-complex $\rm{Hom}^\bul(\ms{V}^\bul, \ms{W}^\bul)$ in the
same way as for equivariant Harish-Chandra complexes.  The category of equivariant 
Harish-Chandra sheaves $\cl{C}(\ms{D}_\lambda, K)$ has equivariant 
 Harish-Chandra sheaves as objects and  for any $\ms{V}^\bul$ and $\ms{W}^\bul$ the 
 morphisms between them are $\rm{Hom}^0(\ms{V}^\bul, \ms{W}^\bul)$.
The homotopy category of equivariant Harish-Chandra sheaves 
$\cl{K}(\ms{D}_\lambda, K)$ has the same objects, but the zeroth cohomology 
$   \rm{H}^0(\rm{Hom}^\bul(\ms{V}^\bul, \ms{W}^\bul))$ of the Hom-complex from $\ms{V}^\bul$ to 
$\ms{W}^\bul$.
The derived equivariant Harish-Chandra sheaf category  $\rm{D}(\ms{D}_\lambda, K)$ is the localization of $\cl{K}(\ms{D}_\lambda, K)$ with respect to 
quasi-isomorphisms.
According to \cite{mp}, when $X$ is the full flag variety 
and $\lambda$ is regular, the derived global sections 
functor 
\begin{equation*}
	\rm{\bf R}\G: \rm{D}^+(\ms{D}_\lambda, K) \to 
	\rm{D}^+(\cl{U}_{[\lambda]}, K),
\end{equation*} 
is an equivalence.   This equivariant form of Beilinson-Bernstein localization
indicates the equivariant Harish-Chandra sheaf categories are the 
appropriate geometric category in which to work in the context of the results of \cite{p}.  

The categories of interest in 
the construction of the geometric Zuckerman functor are as follows.
Given a generalized flag variety $X$ for a Harish-Chandra pair $(\fr{g}, K)$ and a 
homogeneous tdo $\ms{D}_\lambda$ on $X$, we have the categories:
\begin{enumerate}
   \item  The abelian category $\cl{M}_{(w)}(\ms{D}_\lambda,K)$ of (weak) 
   Harish-Chandra sheaves on $X$ for the pair $(\ms{D}_\lambda, K)$.
   \item 
   The category $\cl{C}(\cl{M}_{(w)}(\ms{D}_\lambda,K))$ of complexes of 
   (weak) Harish-Chandra sheaves on $X$, and its homotopy category
    $\cl{K}(\cl{M}_{(w)}(\ms{D}_\lambda,K))$ and derived category
   $\rm{D}(\cl{M}_{(w)}(\ms{D}_\lambda,K))$.
   For concision, in the case of weak Harish-Chandra sheaves 
   we may abbreviate the above notation by 
      \begin{equation*}
          (\ast)_{w}(\ms{D}_\lambda,K) = (\ast)(\cl{M}_{w}(\ms{D}_\lambda,K))
      \end{equation*}     
      for $(\ast) = \cl{C}$, $\cl{K}$, or $\rm{D}$.   
   \item 
   The category of equivariant Harish-Chandra sheaves for $(\ms{D}_\lambda, K)$, 
   denoted $\cl{C}(\ms{D}_\lambda, K)$ and the homotopy and derived categories
   $(\ast)(\ms{D}_\lambda, K)$ with $(\ast) = \cl{K}$ and $\rm{D}$, respectively.
    
   \item 
   For $S\subseteq K$ a closed subgroup, the categories 
   $(\ast)(\cl{M}(\ms{D}_\lambda, K), S^{(w)})$, where $(\ast) = \cl{C}, \cl{K}$ or $\rm{D}$, 
   with objects consisting of complexes of  Harish-Chandra modules
   for  $(\ms{D}_\lambda, K)$ that are (weakly) $S$-equivariant complexes.   
   With respect to the notation of this list, we have
      \begin{equation*}
         (\ast)(\cl{M}(\ms{D}_\lambda, K), S^w) = (\ast)(\cl{M}(\ms{D}_\lambda, K\times S^w)).
      \end{equation*}
\end{enumerate}

 %%%%%%%%
\subsection{The Geometric $\rm{Ind_h}$}
%%%%%%%%

The forgetful functor 
\begin{equation*}
	\rm{For_h}: \cl{C}(\ms{D}_\lambda, K) \to \cl{C}_{w}(\ms{D}_\lambda,K)
\end{equation*}
forgets the homotopy $i$, as did the forgetful functor for the
equivariant $(\cl{A},K)$-complexes of \S \ref{ForAK-cxs}.   

\begin{prop}
	The forgetful functor $\rm{For_h}$ has a right adjoint. 
\end{prop}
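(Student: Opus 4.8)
The plan is to mimic, in the geometric setting, the construction of $\rm{Ind_h}$ for abstract Harish-Chandra complexes given in \S 3. The key observation is that the two ingredients used there — the standard complex $\cl{N}(\fr{k})$ and tensoring over $\cl{U}(\fr{k})$ — both make sense for (weak) Harish-Chandra sheaves, because a weak Harish-Chandra sheaf $\ms{V}$ carries a $\fr{k}$-action (via the $\ms{O}_X$-equivariant structure and the tdo, combined into $\omega = \nu - \pi$), and this $\fr{k}$-action on sections turns $\ms{V}^\bul$ into a complex of right $\cl{U}(\fr{k})$-modules exactly as before. So first I would record that $\cl{N}(\fr{k})$, viewed as an object of $\cl{C}(\fr{k},K)$ via the restriction functor $\fr{g}\supset\fr{k}$, can be tensored degreewise over $\cl{U}(\fr{k})$ with any $\ms{V}^\bul\in\cl{C}_{w}(\ms{D}_\lambda,K)$: the result $\ms{V}^\bul\otimes_{\fr{k}}\cl{N}(\fr{k})$ is a complex of quasi-coherent $\ms{O}_X$-modules, since $\cl{N}(\fr{k})^{-k}=\cl{U}(\fr{k})\otimes_\CC\wedge^k\fr{k}$ is a free $\cl{U}(\fr{k})$-module and the tensor product is just a finite direct sum of copies of $\ms{V}^\bul$ in each degree.

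Next I would equip $\ms{V}^\bul\otimes_{\fr{k}}\cl{N}(\fr{k})$ with the structure of an equivariant Harish-Chandra sheaf. The $\ms{D}_\lambda$-action is $\pi(a)(v\otimes n)=\pi_V(a)v\otimes n$ (this is well defined since the $\ms{D}_\lambda$-action commutes with the right $\cl{U}(\fr{k})$-action coming from $\omega_V$, because $\omega_V=\nu_V-\pi_V$ and both $\nu_V$ and $\pi_V$ are compatible with $\pi_V$ on a \emph{weak} sheaf); the $K$-action is diagonal, $\nu(k)(v\otimes n)=\nu_V(k)v\otimes\nu_N(k)n$; and the homotopy is $i_\xi(v\otimes n)=(-1)^p v\otimes i_\xi(n)$ on $\ms{V}^p\otimes\cl{N}(\fr{k})^{k-p}$. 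Verifying conditions (1)--(4) in the definition of an equivariant Harish-Chandra sheaf is a degreewise check that is formally identical to the computation behind the Theorem on $V^\bul\otimes_{\fr{k}}W^\bul$ in \S 3.4; in particular condition (4), $di_\xi+i_\xi d=\omega(\xi)$, comes down to the fact that on $\cl{N}(\fr{k})$ one has $d i_\xi + i_\xi d = \omega_N(\xi)$ (the standard property of the Koszul complex) together with the observation that $\omega$ on the tensor product acts only through the $\cl{N}(\fr{k})$-factor, since the diagonal $\fr{k}$-action minus the $\pi$-action kills the $\ms{V}$-factor. Then, setting
\begin{equation*}
   \rm{Ind_h}(\ms{V}^\bul) := \ms{V}^\bul\otimes_{\fr{k}}\cl{N}(\fr{k})[-d_K],
\end{equation*}
with $\rm{Ind_h}(f)=f\otimes 1$ on morphisms, gives a functor $\cl{C}_{w}(\ms{D}_\lambda,K)\to\cl{C}(\ms{D}_\lambda,K)$.

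Finally I would prove the adjunction $\rm{Hom}_{\cl{C}(\ms{D}_\lambda,K)}(\ms{W}^\bul,\rm{Ind_h}(\ms{V}^\bul))\cong\rm{Hom}_{\cl{C}_{w}(\ms{D}_\lambda,K)}(\rm{For_h}\,\ms{W}^\bul,\ms{V}^\bul)$. By the $\Phi$-isomorphism of \S 3.4 (which is purely a statement about the $\cl{U}(\fr{k})$-module $\wedge^\bul\fr{k}$ and transports verbatim to sheaves of such modules), $\rm{Ind_h}$ is naturally isomorphic to $\ms{V}^\bul\mapsto\rm{Hom}^\bul_{\cl{C}_{w}}(\cl{N}(\fr{k}),\ms{V}^\bul)$ with its natural homotopy, so it suffices to produce the adjunction in the Hom-complex form. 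The counit is evaluation at the generator $1\in\cl{N}(\fr{k})^0=\cl{U}(\fr{k})$, i.e. $\ms{V}^\bul\otimes_\fr{k}\cl{N}(\fr{k})\to\ms{V}^\bul$, $v\otimes u\mapsto \omega_V(u^\iota)v$ up to the top-form identification; the unit sends $\ms{W}^\bul$ into $\ms{W}^\bul\otimes_\fr{k}\cl{N}(\fr{k})$ via the Koszul augmentation. Checking that these are morphisms of equivariant Harish-Chandra sheaves, and that the triangle identities hold, is the same bookkeeping already done in \cite{k} and \cite{p} for $(\cl{A},K)$-complexes; the only new point is compatibility with the $\ms{O}_X$-module structure, which is automatic because everything in sight is $\ms{O}_X$-linear (the $\fr{k}$-action used in the tensor product acts by differential operators of order $\le 1$, but the tensor product is taken over $\CC$ on the $\wedge\fr{k}$ factor and the adjunction maps involve only the $\cl{U}(\fr{k})$-action, never $\ms{O}_X$-multiplication).

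\textbf{Main obstacle.} The step requiring the most care is checking that the $\ms{D}_\lambda$-action and the right $\cl{U}(\fr{k})$-action on $\ms{V}^\bul$ (coming from $\omega_V$) genuinely commute on a \emph{weak} Harish-Chandra sheaf, so that $\pi(a)(v\otimes n)=\pi_V(a)v\otimes n$ is well defined on $\ms{V}^\bul\otimes_\fr{k}\cl{N}(\fr{k})$ and condition (4) holds with the correct sign. This is where the distinction between weak and genuine Harish-Chandra sheaves, and the precise form of the $K$-equivariance axiom $\nu(k)\pi(a)\nu(k^{-1})=\pi(\phi(k)a)$, has to be used; the remaining verifications are routine sign-chasing parallel to \S 3.
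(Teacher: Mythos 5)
Your proposal is correct and is essentially the paper's own argument: the paper defines the geometric $\rm{Ind_h}(\ms{V}^\bul)=\rm{Hom}^\bul_{\cl{U}(\fr{k})}(\cl{N}(\fr{k}),\ms{V}^\bul)$ with $\cl{N}(\fr{k})$ viewed as a constant sheaf and, just as you do, defers the purely computational checks that this lands in $\cl{C}(\ms{D}_\lambda,K)$ and is right adjoint to $\rm{For_h}$ to \cite{k}, while your tensor-product form $\ms{V}^\bul\otimes_{\fr{k}}\cl{N}(\fr{k})[-d_K]$ is exactly the paper's $\rm{Ind'_h}$ from \S 3 transported to sheaves, already shown there to be naturally isomorphic to the Hom-complex form. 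Your identification of the crux --- that on a \emph{weak} Harish-Chandra sheaf the operator $\omega_V=\nu_V-\pi_V$ is $\ms{D}_\lambda$-linear because differentiating the equivariance axiom $\nu(k)\pi(a)\nu(k^{-1})=\pi(\phi(k)a)$ gives $[\omega_V(\xi),\pi_V(a)]=0$, so the $\ms{D}_\lambda$-action descends through the tensor product over $\cl{U}(\fr{k})$ --- is the right key point (your description of the unit of the adjunction as the Koszul augmentation is slightly loose, since the unit must also involve the homotopy $i$ on the equivariant object, but that bookkeeping is precisely what both you and the paper outsource to the cited references).
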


We will now construct a functor $\rm{Ind_h}: \cl{C}_{w}(\ms{D}_\lambda, K) \to \cl{C}(\ms{D}_\lambda,K)$ and show it is the right adjoint to $\rm{For_h}$ in Proposition
\ref{geoindadjprop}.
For an object $\ms{V}^\bul~\in~\cl{C}(\ms{D}_\lambda,K)$, put 
\begin{equation*}
   \rm{Ind_h}(\ms{V}^\bul)=\rm{Hom}^\bul_{\cl{U}(\fr{k})}(\cl{N}(\fr{k}), \ms{V}^\bul),
\end{equation*}
where $\cl{N}(\fr{k})$ is  the standard complex thought of as a constant
sheaf on $X$.  For any $\xi\in\fr{k}$ define
\begin{equation*}
	i_\xi: \rm{Ind_h}(\ms{V}^\bul)\to \rm{Ind_h}(\ms{V}^\bul)[-1],\quad
	i_\xi f(u\otimes\tau) = (-1)^{j}f(i_\xi(u\otimes\tau))
\end{equation*}
for  $f$ in degree $j$, where $i_\xi: \cl{N}(\fr{k})\to \cl{N}(\fr{k})[-1]$ is the 
map  
\begin{equation*}i_\xi(u\otimes\tau) = -u\otimes \xi\wedge\tau.\end{equation*} 
  As a $(\ms{D}_\lambda, K)$-module,
$\ms{D}_\lambda$ acts on $f\in \rm{Ind_h}(\ms{V}^\bul)$ by its action on $\ms{V}^\bul$, 
and $K$ acts by conjugating $f$ by the $K$-actions on each factor.   That is,
for all local sections $T\in\ms{D}_\lambda$, we have 
$(Tf)(u\otimes\tau)=Tf(u\otimes\tau)$ and 
\begin{equation*}
	(\nu(k)f)(u\otimes\tau) = \nu_V(k)f(\nu_N(k^{-1})u\otimes\tau),
\end{equation*}  
where $\nu_V$ is the $K$-action on local sections of $\ms{V}^\bul$ and $\nu_N$ the $K$-action
on $\cl{N}(\fr{k})$.  

\begin{prop} The pair $(\rm{Ind_h}(\ms{V}^\bul),i)$
is an object of $\cl{C}(\ms{D}_\lambda,K)$ for all $\ms{V}^\bul\in \cl{C}_{w}(\ms{D}_\lambda,K)$.
\end{prop}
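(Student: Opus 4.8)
The plan is to verify two things: that $\rm{Ind_h}(\ms{V}^\bul)$ is genuinely a complex of weak Harish-Chandra sheaves, and that $(\rm{Ind_h}(\ms{V}^\bul),i)$ satisfies conditions $(1)$--$(4)$ in the definition of an equivariant Harish-Chandra sheaf. The argument runs parallel to the statement that $(\rm{Ind_h}(V^\bul),i)\in\cl{C}(\cl{A},K)$ for equivariant $(\cl{A},K)$-complexes, so I will mostly flag the points needing attention. For the first item: since $\cl{N}(\fr{k})^{-k}=\cl{U}(\fr{k})\otimes_\CC\wedge^k\fr{k}$ is a free $\cl{U}(\fr{k})$-module of finite rank and $\cl{N}(\fr{k})$ is bounded, in each total degree the Hom-complex $\rm{Hom}^\bul_{\cl{U}(\fr{k})}(\cl{N}(\fr{k}),\ms{V}^\bul)$ is a \emph{finite} product of sheaves of the form $\rm{Hom}_\CC(\wedge^k\fr{k},\ms{V}^p)\simeq(\wedge^k\fr{k})^{*}\otimes_\CC\ms{V}^p$, hence quasi-coherent. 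The $\ms{D}_\lambda$-action on values and the conjugation $K$-equivariant $\ms{O}_X$-structure described before the statement are inherited from those of $\ms{V}^\bul$, so they are mutually compatible, and a direct check shows the Hom-complex differential is a morphism of weak Harish-Chandra sheaves; thus $\rm{Ind_h}(\ms{V}^\bul)\in\cl{C}_{w}(\ms{D}_\lambda,K)$.

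Conditions $(1)$--$(3)$ are then formal. For $(1)$: $i_\xi$ alters only the $\cl{N}(\fr{k})$-argument, by $u\otimes\tau\mapsto-u\otimes\xi\wedge\tau$ up to a sign depending on the degree of $f$, while $\ms{D}_\lambda$ acts on values, so $i_\xi$ is $\ms{D}_\lambda$-linear. For $(2)$: the $K$-action $\nu_N$ on $\cl{N}(\fr{k})$ is induced by $\rm{Ad}$ on the exterior factor, so conjugating $i_\xi$ by $\nu(k)$ turns $\xi\wedge(-)$ into $(\rm{Ad}(k)\xi)\wedge(-)$, and the degree-shift signs cancel, giving $\nu(k)\circ i_\xi\circ\nu(k^{-1})=i_{\rm{Ad}(k)\xi}$. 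Condition $(3)$ reduces to the identity $\xi\wedge\eta\wedge\tau+\eta\wedge\xi\wedge\tau=0$ in $\wedge\fr{k}$ together with the signs from the two degree-$(-1)$ shifts.

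The substance is condition $(4)$, and it is also where I expect the only real difficulty---not anything conceptual, but the bookkeeping of signs and of the left-/right-$\cl{U}(\fr{k})$-module conventions. Expanding the Hom-complex differential, the terms involving the differential of $\ms{V}^\bul$ cancel between $di_\xi f$ and $i_\xi df$, leaving
\begin{equation*}
(di_\xi f+i_\xi df)(n)=f\bigl((d_N i_\xi+i_\xi d_N)n\bigr),
\end{equation*}
where now $i_\xi$ and $d_N$ denote the operators on $\cl{N}(\fr{k})$. Here I would invoke the Cartan homotopy identity for the standard complex---equivalently, the fact (recorded earlier for $\cl{N}(\fr{g})$) that $(\cl{N}(\fr{k}),i)$ is an equivariant $(\cl{U}(\fr{k}),K)$-complex---to rewrite the right-hand side as $f(\omega_N(\xi)n)$ with $\omega_N=\nu_N-\pi_N$. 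Finally, since each $f$ intertwines the left multiplication $\pi_N$ on $\cl{N}(\fr{k})$ with the $\omega_V$-action on $\ms{V}^\bul$, I would expand $f(\omega_N(\xi)n)$ using $\omega_N=\nu_N-\pi_N$ and this intertwining property, and match the result, after differentiating the $\nu$- and $\pi$-actions on $\rm{Ind_h}(\ms{V}^\bul)$ spelled out before the statement, with $(\omega(\xi)f)(n)$. This is formally the same computation already done in the excerpt for $\rm{Hom}^\bul(\cl{N}(\fr{k}),V^\bul)$; the only genuinely new input is the sheaf-theoretic well-definedness from the first paragraph.
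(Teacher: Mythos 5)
Your verification is correct and is exactly what the paper has in mind: the paper omits this proof, describing it as ``computations following the definitions,'' and your plan---checking quasi-coherence of the finite products $(\wedge^p\fr{k})^*\otimes_\CC\ms{V}^q$, the formal conditions $(1)$--$(3)$, and condition $(4)$ via the Cartan identity on $\cl{N}(\fr{k})$ together with the intertwining of $\pi_N$ with $\omega_V$---is precisely that computation, mirroring the algebraic case of \S 3. The only caution is the sign bookkeeping you already flag (note the paper's own conventions give $\omega(\xi)f=-f\circ\omega_N(\xi)$ with $i_\xi f=(-1)^{k-1}f\circ i_\xi$), so the final matching with $\omega(\xi)f$ works out once the conventions are fixed consistently.
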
 

\noindent We omit the proof which consists entirely of computations following the definitions.  

\begin{prop}\label{geoindadjprop}  The functor $\rm{Ind_h}$ is right adjoint to $\rm{For_h}$.  \end{prop}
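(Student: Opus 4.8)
The plan is to exhibit, for every $\ms{V}^\bul\in\cl{C}_{w}(\ms{D}_\lambda,K)$ and every $\ms{W}^\bul\in\cl{C}(\ms{D}_\lambda,K)$, a natural isomorphism
\begin{equation*}
	\rm{Hom}_{\cl{C}(\ms{D}_\lambda,K)}(\ms{W}^\bul, \rm{Ind_h}(\ms{V}^\bul)) \;\simeq\; \rm{Hom}_{\cl{C}_{w}(\ms{D}_\lambda,K)}(\rm{For_h}(\ms{W}^\bul), \ms{V}^\bul),
\end{equation*}
functorial in both variables.  The adjunction unit and counit will come, as in the algebraic case (compare \cite[Prop. 4.2.3]{k} for the analogous statement for $(\cl{A},K)$-complexes), from the augmentation $\varepsilon\colon \cl{N}(\fr{k})\to\CC$ of the standard complex: the counit $\rm{For_h}\circ\rm{Ind_h}(\ms{V}^\bul)\to\ms{V}^\bul$ is evaluation $f\mapsto f(1\otimes 1)$ at the generator of $\cl{N}(\fr{k})^0=\cl{U}(\fr{k})$, and the unit $\ms{W}^\bul\to\rm{Ind_h}(\rm{For_h}(\ms{W}^\bul))$ sends a local section $w$ to the map $u\otimes\tau\mapsto$ (suitable contraction of the homotopy $i$ of $\ms{W}^\bul$ against $\tau$, followed by the $\cl{U}(\fr{k})$-action of $u$ via $\omega_W$).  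Because everything in sight is $\ms{O}_X$-linear and defined sheaf-locally by the same formulas as in the module case, the construction is essentially forced.

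First I would fix the correspondence on morphisms.  Given a weak morphism $g\colon \rm{For_h}(\ms{W}^\bul)\to\ms{V}^\bul$, define $\hat g\colon \ms{W}^\bul\to\rm{Ind_h}(\ms{V}^\bul)$ by $\hat g(w)(u\otimes\tau) = g$ applied to the result of contracting the homotopies $i_\tau$ of $\ms{W}^\bul$ and then acting by $u$; conversely, given $h\colon\ms{W}^\bul\to\rm{Ind_h}(\ms{V}^\bul)$ in $\cl{C}(\ms{D}_\lambda,K)$, set $\check h(w) = h(w)(1\otimes 1)$.  I would then check four things, each a direct computation from the definitions already recorded in this section and in \S\ref{ForAK-cxs}: (i) $\hat g$ is a map of complexes of weak Harish-Chandra sheaves, i.e. $\ms{D}_\lambda$-linear, $K$-equivariant on local sections, and commuting with differentials — here the $K$-equivariance of $g$ and the equivariance condition (2) on the homotopy $i$ of $\ms{W}^\bul$ are what make $\hat g$ land in the $K$-equivariant Hom-sheaf; (ii) $\hat g$ intertwines the homotopy of $\ms{W}^\bul$ with the homotopy $i_\xi f(u\otimes\tau)=(-1)^j f(i_\xi(u\otimes\tau))$ on $\rm{Ind_h}(\ms{V}^\bul)$, which is where the sign $(-1)^j$ and the fourth axiom $di_\xi+i_\xi d=\omega(\xi)=\nu-\pi$ for $\ms{W}^\bul$ enter; (iii) $\check h$ is weakly $(\ms{D}_\lambda,K)$-linear; and (iv) the assignments $g\mapsto\hat g$ and $h\mapsto\check h$ are mutually inverse and natural in $\ms{V}^\bul$ and $\ms{W}^\bul$.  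Naturality is immediate from the formulas; mutual inverseness reduces to the two triangle identities for $\varepsilon$ and the unit.

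The main obstacle is step (ii): verifying that the homotopy-compatibility condition on morphisms in $\cl{C}(\ms{D}_\lambda,K)$ translates exactly into the condition defining a morphism of weak complexes after forgetting.  The subtlety is that $\ms{W}^\bul$ carries its own homotopy $i^W$ (which is part of its structure and must be respected) while $\rm{Ind_h}(\ms{V}^\bul)$ carries the homotopy built from the Koszul differential $i_\xi(u\otimes\tau)=-u\otimes\xi\wedge\tau$; the bijection on morphisms must send $i^W$-compatible maps to maps compatible with this Koszul homotopy, and one has to see that, after contracting, the Koszul combinatorics (anticommutativity $i_\xi i_\eta+i_\eta i_\xi=0$ and the Koszul differential identity) reproduce precisely axioms (3) and (4) for $\ms{W}^\bul$.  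This is bookkeeping with signs rather than a conceptual difficulty, but it is the step where a careless sign would break the adjunction, so it deserves the most care.  Everything else — $\ms{O}_X$-linearity, quasi-coherence, the sheaf-theoretic nature of the Hom-complex $\rm{Hom}^\bul_{\cl{U}(\fr{k})}(\cl{N}(\fr{k}),\ms{V}^\bul)$ — is inherited stalkwise from the algebraic computation of \S\ref{ForAK-cxs} and needs no new argument.
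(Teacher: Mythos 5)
Your proposal is correct and follows essentially the same route as the paper: the paper omits this ``completely computational'' verification, deferring to \cite[\S 5.3]{k} (and the algebraic analogue \cite[Prop.\ 4.2.3]{k}), and that omitted argument is exactly the explicit unit/counit computation with the Hom-complex $\rm{Hom}^\bul_{\cl{U}(\fr{k})}(\cl{N}(\fr{k}), \text{ - })$ that you outline --- evaluation at $1\otimes 1$ as counit, contraction against the Koszul homotopy combined with the $\omega$-action as unit, with the sign and homotopy-compatibility checks you flag in step (ii) being precisely the content of that omitted computation. Working at the level of the Hom-complex, as you do, also yields the adjunction on homotopy categories, which is the point the paper emphasizes after stating the proposition.
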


\noindent  We again omit the completely computational proof, which can be found in \cite[\S 5.3]{k}. A version of this proposition is also presented without proof  
in \cite[Prop. 2.13.2]{bl2}. We use the Hom-complex in the proof 
of adjointness, which allows us to conclude that $\rm{Ind_h}$ is also the right
adjoint to $\rm{For_h}$ for the homotopy category.  Moreover,  the functor $\rm{Ind_h}$ 
preserves $\cl{K}$-injective objects.  The proposition below follows immediately from 
Lemma \ref{enoughinj} below.

   \begin{prop}
    If the category $\cl{C}_{w}(\ms{D}_\lambda,K)$ has enough $\cl{K}$-injectives, then the categories 
    $(\ast)(\ms{D}_\lambda,K)$ for $(\ast)=\cl{C}, \cl{K}$ or $\rm{D}$ have enough $\cl{K}$-injectives.    
 \end{prop}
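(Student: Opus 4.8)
The plan is to deduce this as a formal consequence of the adjunction in Proposition \ref{geoindadjprop} together with the (hypothesized) Lemma \ref{enoughinj}. The statement to prove is: if $\cl{C}_{w}(\ms{D}_\lambda,K)$ has enough $\cl{K}$-injectives, then each of $\cl{C}(\ms{D}_\lambda,K)$, $\cl{K}(\ms{D}_\lambda,K)$, and $\rm{D}(\ms{D}_\lambda,K)$ has enough $\cl{K}$-injectives. The mechanism is that $\rm{Ind_h}$, being right adjoint to the exact forgetful functor $\rm{For_h}$, carries $\cl{K}$-injective complexes of weak Harish-Chandra sheaves to $\cl{K}$-injective objects of $\cl{C}(\ms{D}_\lambda,K)$; this preservation property was already asserted in the discussion preceding the statement (and is again the essential content of Lemma \ref{enoughinj}). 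Given this, enough $\cl{K}$-injectives in the equivariant category follows by embedding an arbitrary object into the $\rm{Ind_h}$ of a $\cl{K}$-injective resolution of its underlying weak complex.

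In more detail, I would proceed as follows. First, fix $\ms{V}^\bul \in \cl{C}(\ms{D}_\lambda,K)$ and apply $\rm{For_h}$ to obtain the underlying complex $\rm{For_h}(\ms{V}^\bul) \in \cl{C}_{w}(\ms{D}_\lambda,K)$. By hypothesis there is a $\cl{K}$-injective object $\ms{I}^\bul \in \cl{C}_{w}(\ms{D}_\lambda,K)$ and a quasi-isomorphism (indeed a degreewise split monomorphism, or at least a monomorphism with the relevant property) $\rm{For_h}(\ms{V}^\bul) \to \ms{I}^\bul$. Second, apply the adjunction: the unit $\ms{V}^\bul \to \rm{Ind_h}(\rm{For_h}(\ms{V}^\bul))$ composed with $\rm{Ind_h}$ of the chosen map gives a morphism $\ms{V}^\bul \to \rm{Ind_h}(\ms{I}^\bul)$ in $\cl{C}(\ms{D}_\lambda,K)$. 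Third, one checks this composite is a quasi-isomorphism (or has the appropriate monomorphism/resolution property): this uses that $\rm{Ind_h}$ applied to the Koszul-type complex $\cl{N}(\fr{k})$ has good exactness behavior — concretely, because $\fr{k}$ is reductive and $\cl{N}(\fr{k})$ is a finite resolution of $\CC$, the functor $\rm{Ind_h} = \rm{Hom}^\bul_{\cl{U}(\fr{k})}(\cl{N}(\fr{k}), -)$ together with the unit map realizes the standard homotopy-invariance, so the unit is a quasi-isomorphism on objects already equipped with a homotopy $i$. Fourth, invoke that $\rm{Ind_h}(\ms{I}^\bul)$ is $\cl{K}$-injective in $\cl{C}(\ms{D}_\lambda,K)$, which is exactly the preservation statement: for any $\cl{A}^\bul \in \cl{C}(\ms{D}_\lambda,K)$,
\begin{equation*}
\rm{Hom}^\bul_{\cl{C}(\ms{D}_\lambda,K)}(\cl{A}^\bul, \rm{Ind_h}(\ms{I}^\bul)) \simeq \rm{Hom}^\bul_{\cl{C}_{w}(\ms{D}_\lambda,K)}(\rm{For_h}(\cl{A}^\bul), \ms{I}^\bul),
\end{equation*}
and $\rm{For_h}$ is exact (it forgets structure, preserving kernels, cokernels, and acyclicity), so the right-hand side is acyclic on acyclic $\cl{A}^\bul$, giving $\cl{K}$-injectivity of $\rm{Ind_h}(\ms{I}^\bul)$. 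This handles $(\ast) = \cl{C}$; the cases $\cl{K}$ and $\rm{D}$ follow because a $\cl{K}$-injective resolution in $\cl{C}$ descends to $\cl{K}$ and to $\rm{D}$, $\cl{K}$-injectivity being a statement about the homotopy category in the first place.

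The main obstacle I anticipate is not in the formal adjunction bookkeeping but in verifying that the unit map $\ms{V}^\bul \to \rm{Ind_h}(\rm{For_h}(\ms{V}^\bul))$ is a quasi-isomorphism — equivalently, that $\rm{Ind_h}$ together with $\rm{For_h}$ behaves like a genuine (co)resolution functor rather than merely an adjoint pair. This is where the hypotheses about $\fr{k}$ being reductive and $\cl{N}(\fr{k})$ being the Koszul resolution of the trivial module are used: one must see that $\rm{Hom}^\bul_{\cl{U}(\fr{k})}(\cl{N}(\fr{k}), \ms{V}^\bul)$, with its built-in homotopy, computes the identity up to quasi-isomorphism on complexes already carrying a compatible homotopy $i$. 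Since the excerpt defers all of this to Lemma \ref{enoughinj} (stated to follow) and to the already-asserted fact that $\rm{Ind_h}$ preserves $\cl{K}$-injectives, in the proof proper I would simply cite Lemma \ref{enoughinj} and the preceding remarks, so that the argument reduces to the three-line formal deduction: take a weak $\cl{K}$-injective resolution, apply $\rm{Ind_h}$, use the adjunction-compatibility of the unit to get a quasi-isomorphism into it, and conclude $\cl{K}$-injectivity in all three flavors of category from the exactness of $\rm{For_h}$.
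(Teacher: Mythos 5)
Your formal bookkeeping is fine and matches the remarks surrounding the statement: the Hom-complex adjunction of Proposition \ref{geoindadjprop} together with exactness of $\rm{For_h}$ does show that $\rm{Ind_h}$ carries $\cl{K}$-injectives of $\cl{C}_{w}(\ms{D}_\lambda,K)$ to $\cl{K}$-injectives of $\cl{C}(\ms{D}_\lambda,K)$, and the passage from $\cl{C}$ to $\cl{K}$ and $\rm{D}$ is harmless. The gap is exactly the step you flag as the ``main obstacle'': the unit $\ms{V}^\bul\to \rm{Ind_h}(\rm{For_h}(\ms{V}^\bul))$ is \emph{not} a quasi-isomorphism, even on objects equipped with the homotopy $i$. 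Test it on a single Harish-Chandra sheaf $\ms{V}$ placed in degree $0$: the only possible homotopy is $i=0$, and $\omega=\nu-\pi=0$. Any $f\in\rm{Hom}^\bul_{\cl{U}(\fr{k})}(\cl{N}(\fr{k}),\ms{V})$ (Hom taken against the $\omega$-action) then kills $\fr{k}\cl{U}(\fr{k})\otimes\wedge^\bul\fr{k}$, so $\rm{Ind_h}(\rm{For_h}(\ms{V}))$ is the complex $\rm{Hom}_{\CC}(\wedge^\bul\fr{k},\ms{V})$ with the Chevalley--Eilenberg differential for the trivial action, whose cohomology is $\ms{V}\otimes\rm{H}^\bul(\fr{k},\CC)$ --- nonzero in degree $d_K>0$. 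Worse, the defect cannot be repaired by a cleverer choice of map: since $\cl{N}(\fr{k})$ is a bounded complex of free $\cl{U}(\fr{k})$-modules, $\rm{Ind_h}$ preserves all quasi-isomorphisms (this is in effect what Lemma \ref{enoughinj} records), so for \emph{any} $\cl{K}$-injective resolution $f:\rm{For_h}(\ms{V})\to\ms{I}^\bul$ the object $\rm{Ind_h}(\ms{I}^\bul)$ still has cohomology $\ms{V}\otimes\rm{H}^\bul(\fr{k},\CC)$, and therefore admits no quasi-isomorphism from $\ms{V}$ at all. Hence ``resolve weakly and apply $\rm{Ind_h}$ once'' cannot produce $\cl{K}$-injective resolutions in $\cl{C}(\ms{D}_\lambda,K)$; citing Lemma \ref{enoughinj} does not discharge the step, because that lemma only compares two coinduced objects and provides no comparison with $\ms{V}^\bul$ itself.

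To close the gap one needs a device that genuinely corrects the $\fr{k}$-direction rather than the bare coinduction $\rm{Hom}^\bul_{\cl{U}(\fr{k})}(\cl{N}(\fr{k}),-)$: either bring in the group algebra of functions, as in the construction of $\G^{\rm{equi}}_{K,T}$ where the factor $R(K)$ is precisely what makes the relevant complex acyclic in the $\fr{k}$-direction, or build $\cl{K}$-injectives by a Spaltenstein-style inverse-limit (tower) construction whose layers are the cofree objects $\rm{Ind_h}(\ms{I}^\bul)$, rather than a single such object. I note that the paper's own justification is the terse ``follows immediately from Lemma \ref{enoughinj}'' and is vulnerable to the same objection, so your instinct about where the real content lies is correct; but as written your argument rests on a false intermediate claim (quasi-invertibility of the unit) and so does not prove the proposition.
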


\begin{lem}\label{enoughinj}
   If $f: \ms{V}^\bul\to \ms{I}^\bul$ is a quasi-isomorphism of $\ms{V}^\bul$ with a $\cl{K}$-injective 
   $\ms{I}^\bul$, then $\phi_f:= \rm{Ind_h}(f)$ is also a quasi-isomorphism. 
\end{lem}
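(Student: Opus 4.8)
The plan is to establish the slightly stronger fact that $\rm{Ind_h}$ sends \emph{every} quasi-isomorphism of complexes in $\cl{C}_{w}(\ms{D}_\lambda,K)$ to a quasi-isomorphism; the hypothesis that $\ms{I}^\bul$ be $\cl{K}$-injective then plays no role, but the statement in this form is exactly what the preceding proposition needs. The key feature to exploit is that $\cl{N}(\fr{k})$ is a \emph{bounded} complex, supported in degrees $-d_K$ through $0$, whose terms $\cl{N}(\fr{k})^{-j}=\cl{U}(\fr{k})\otimes_\CC\wedge^j\fr{k}$ are free of \emph{finite} rank over the constant sheaf of algebras $\cl{U}(\fr{k})$. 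Since quasi-isomorphisms in these equivariant categories are those detected on underlying complexes of sheaves, the extra $(\ms{D}_\lambda,K)$-structure and the homotopy $i$ carried by $\rm{Ind_h}(\ms{V}^\bul)$ are irrelevant, and the whole question becomes one about the functor $\rm{Hom}^\bul_{\cl{U}(\fr{k})}(\cl{N}(\fr{k}),-)$ on complexes of quasi-coherent sheaves.

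The first step is the degreewise statement: for each $0\le j\le d_K$, restricting a $\cl{U}(\fr{k})$-linear sheaf homomorphism on the free module $\cl{N}(\fr{k})^{-j}$ to its generators $1\otimes\wedge^j\fr{k}$ gives a natural isomorphism
\[
  \rm{Hom}_{\cl{U}(\fr{k})}\!\left(\cl{N}(\fr{k})^{-j},\,\ms{V}\right)\;\simeq\;\rm{Hom}_\CC\!\left(\wedge^j\fr{k},\,\ms{V}\right)\;\simeq\;\ms{V}\otimes_\CC(\wedge^j\fr{k})^{*}.
\]
Being a finite direct sum of copies of the identity, this is an \emph{exact} functor of $\ms{V}\in\cl{M}_{w}(\ms{D}_\lambda,K)$, and in particular it commutes with formation of cohomology sheaves. (Only freeness of the source over $\cl{U}(\fr{k})$ enters here; the right $\cl{U}(\fr{k})$-module structure on $\ms{V}$ coming from $\omega_V=\nu_V-\pi_V$ is irrelevant to exactness.) Consequently $\rm{Ind_h}(\ms{V}^\bul)$ is naturally the total complex of a double complex whose columns, indexed by $0\le j\le d_K$, are the complexes $\rm{Hom}_{\cl{U}(\fr{k})}(\cl{N}(\fr{k})^{-j},\ms{V}^\bul)$ placed in the appropriate (shifted) homological position, and $\rm{Ind_h}(f)$ is the induced map of total complexes.

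Next I would filter $\rm{Ind_h}(\ms{V}^\bul)$ by columns. This is a \emph{finite} filtration, with one step for each of the $d_K+1$ columns, whose $j$-th associated graded complex is, up to a shift, $\rm{Hom}_{\cl{U}(\fr{k})}(\cl{N}(\fr{k})^{-j},\ms{V}^\bul)\simeq\ms{V}^\bul\otimes_\CC(\wedge^j\fr{k})^{*}$. By the degreewise statement, $\rm{Ind_h}(f)$ is a quasi-isomorphism on each associated graded piece, since there it is just a finite direct sum of shifted copies of $f$. One then inducts on the finitely many filtration steps: the short exact sequences of complexes relating consecutive steps give long exact sequences of cohomology sheaves, and the five lemma propagates the quasi-isomorphism from the graded pieces up to $\rm{Ind_h}(\ms{V}^\bul)$. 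Because the filtration is finite, this is insensitive to any (un)boundedness of $\ms{V}^\bul$, and the lemma follows.

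I do not expect a genuine obstacle: the proof is essentially bookkeeping. The two points to make airtight are (i) the exactness of $\rm{Hom}_{\cl{U}(\fr{k})}(\cl{N}(\fr{k})^{-j},-)$ on $\cl{M}_{w}(\ms{D}_\lambda,K)$, which rests only on $\cl{N}(\fr{k})^{-j}$ being finite-rank free over $\cl{U}(\fr{k})$ --- immediate from $\cl{N}(\fr{k})^{-j}=\cl{U}(\fr{k})\otimes_\CC\wedge^j\fr{k}$ together with $\dim\fr{k}<\infty$ --- and (ii) the identification of $\rm{Ind_h}(\ms{V}^\bul)$ with the total complex above, which is a direct unwinding of the definition of the Hom-complex, using that the relevant product over the degrees of $\cl{N}(\fr{k})$ is finite. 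The same argument, with $\cl{N}(\fr{k})$ replaced by any bounded complex of finite-rank free $\cl{U}(\fr{k})$-modules, shows that $\rm{Ind_h}$ preserves quasi-isomorphisms in general; combined with the already-noted fact that $\rm{Ind_h}$ preserves $\cl{K}$-injective objects, this is precisely what yields enough $\cl{K}$-injectives in $(\ast)(\ms{D}_\lambda,K)$ for $(\ast)=\cl{C},\cl{K},\rm{D}$.
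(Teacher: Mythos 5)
Your proof is correct, but it takes a genuinely different route from the paper's. You establish the stronger fact that $\rm{Ind_h}$ carries \emph{every} quasi-isomorphism in $\cl{C}_{w}(\ms{D}_\lambda,K)$ to a quasi-isomorphism: because $\cl{N}(\fr{k})$ is a bounded complex whose terms $\cl{U}(\fr{k})\otimes_\CC\wedge^{j}\fr{k}$ are finite-rank free over $\cl{U}(\fr{k})$, each column $\rm{Hom}_{\cl{U}(\fr{k})}(\cl{N}(\fr{k})^{-j},-)\simeq(-)\otimes_\CC(\wedge^{j}\fr{k})^{*}$ is exact, and the finite column filtration of the Hom-complex (which $\rm{Ind_h}(f)$ respects, the cross-terms $f\mapsto\pm f\circ d_N$ raising the column index) together with the five lemma pushes the quasi-isomorphism from the associated graded pieces to the total complex; since quasi-isomorphisms are detected on underlying complexes, the homotopy $i$ and the equivariant structure play no role. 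The paper instead uses the $\cl{K}$-injectivity of $\ms{I}^\bul$ in an essential way: it invokes Spaltenstein's lifting characterization of $\cl{K}$-injective complexes against the canonical quasi-isomorphism $\cl{N}(\fr{k})\to\CC$, so that a homotopy class of maps $\psi:\cl{N}(\fr{k})\to\ms{I}^\bul[k]$ — i.e.\ a class in $\rm{H}^k(\rm{Ind_h}(\ms{I}^\bul))$ — is pinned down by where it sends $1$, and then uses that $f$ is a quasi-isomorphism to match such classes bijectively with classes in $\rm{H}^k(\ms{V}^\bul)$. Your argument is the standard homological one (Hom out of a bounded complex of finite-rank frees preserves quasi-isomorphisms), is more elementary, and buys generality: the hypothesis on $\ms{I}^\bul$ is not needed, and the strengthened statement still yields the preceding proposition on enough $\cl{K}$-injectives, since $\rm{Ind_h}$ is separately known to preserve $\cl{K}$-injective objects; the paper's argument, by contrast, is tailored to the $\cl{K}$-injective target and trades the finiteness of $\cl{N}(\fr{k})$ for the homotopy-category universal property.
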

\proof
   Recall from \cite[Prop. 1.5]{s} that $\ms{I}^\bul$ is 
   $\cl{K}$-injective if and only if for all diagrams 
   \begin{equation*}
      \xymatrix{
         \ms{V}^\bul\ar[r]^\phi \ar[d]_s & \ms{I}^\bul \\ \ms{W}^\bul
      }
   \end{equation*}
   with $s$ a quasi-isomorphism, there exists a unique 
   morphism $g_\phi: \ms{W}^\bul\to \ms{I}^\bul$ such that $[g_\phi s]=[\phi]$
   in the homotopy category.  
   
   Consider  the diagram
   \begin{equation*}
      \xymatrix{\cl{N}(\fr{k}) \ar[r]^\psi \ar[d]_s & \ms{I}^\bul[k] \\ \CC}
   \end{equation*}
   for any $k$, where $s$ is the usual quasi-isomorphism.  Then, 
   the unique $g_\psi$ which exists such that $[g_\psi s]=[\psi]$
   must send $1$ to $\psi^0(1)$.  
   
   Next, let $[v]\in \rm{H}^k(\ms{V}^\bul)$ such that $[f(v)]=[\psi^0(1)]\in
   \rm{H}^k(\ms{W}^\bul)$. Since $f$ is a quasi-isomorphism, such $[v]$ is unique.
   Thus there exists a unique $g_f: \CC\to \ms{I}^\bul$ such that
   $[g_fs]=[\phi_f(v)]$.  We have 
 \begin{equation*}[g_f(1)]= [f(v)]=
 [g_\psi(1)]\in \rm{H}^k(\ms{W}).\end{equation*}
   This implies $[g_f]=[g_\psi]$, and consequently  
   $[\phi_f(v)]=   [\psi]$. 
   Moreover, this result 
   verifies both injectivity and surjectivity of the 
   morphism \begin{equation*}[\phi_f]: \rm{H}^k(\ms{V}^\bul)\to \rm{H}^k(\rm{Ind_h}(\ms{I}^\bul))\end{equation*}
   for all~$k$.   
   \endproof

%%%%%%%%%
\subsection{Reduction Principle}
%%%%%%%%%
 
Although the category $\rm{D}(\ms{D}_\lambda, K)$ is not derived from its heart
$\cl{M}(\ms{D}_\lambda, K)$, we define in this section a notion of a derived functor.  
   Trivially, any exact functor on $\cl{M}_{w}(\ms{D}_\lambda,K)$ extends to a
   functor on $\rm{D}(\ms{D}_\lambda,K)$.   Moreover, the forgetful functor from
   $\cl{M}(\ms{D}_\lambda, K)$ to $\cl{M}_{w}(\ms{D}_\lambda,K)$ and 
   $\cl{C}(\ms{D}_\lambda,K)$ to $\cl{C}_{w}(\ms{D}_\lambda,K)$ are 
   obviously faithful, so any functor on the weakly equivariant categories
   lifts immediately to Harish-Chandra sheaves and equivariant Harish-Chandra 
   complexes.  The lifting of properties such as exactness, adjointness, etc. follows trivially.
    The forgetful functors at the homotopy and derived level  are not necessarily
   faithful, since homotopies of morphisms in the equivariant 
   Harish-Chandra categories must anti-commute with the additional structure map $i$
   with which each object is equipped. In this case, the lifting of functors from the 
   weakly equivariant categories to the equivariant categories is non-trivial, but made
   possible by the existence of the right adjoint $\rm{Ind_h}$.  
   
  Since $\cl{C}_{w}(\ms{D}_\lambda,K)$ has enough $\cl{K}$-injectives, so does 
   $\cl{C}(\ms{D}_\lambda,K)$ and  
   the corresponding homotopy and derived 
   categories.  In this case, any left exact functor on
   $\cl{M}_{w}(\ms{D}_\lambda,K)$ 
   defines a right derived functor on $\rm{D}^*(\ms{D}_\lambda, K)$, up to imposing 
   appropriate finiteness conditions (depending on $*$) for cohomological dimension.
	 In the next sections, 
	the construction of the functors needed for the main theorem are given in terms of 
	left exact functors on $\cl{M}_{w}(\ms{D}_\lambda,K)$ with finite right cohomological 
	dimension.

%%%%%%%%%
\subsection{Restriction of Group Actions}
%%%%%%%%%

Let $S\subset K$ be a closed subgroup.  There is a restriction functor 
$\rm{Res}_S^K$ from $\cl{M}(\ms{D}_\lambda, K)$ to $\cl{M}(\ms{D}_\lambda,S)$ 
defined by restricting the $K$-action to $S$.  
Since the $K$-action on an equivariant sheaf $\ms{V}$ is defined by 
an isomorphism $\phi: \mu^*\ms{V}\tilde{\to}\pi^*\ms{V}$, with $\mu, \pi: K\times X\to X$
the usual action and projection morphisms (respectively),  the restriction 
of the $K$-action comes from taking $\phi$ to $j^*\phi$,
where $j: S\times X\to K\times X$ is the obvious inclusion.  
The restriction functor is exact and therefore extends to the equivariant derived categories.

\begin{defn}
   For $S\subset K$ a closed subgroup 
   define the restriction functor 
   \begin{equation*}
       \rm{Res}_S^K: \cl{C}(\ms{D}_\lambda, K)\to \cl{C}(\ms{D}_\lambda, S)
   \end{equation*}
   by restricting the $K$-action to $S$ and the map $i$ to $\fr{s}$.  
\end{defn}

\noindent Let $\pi_*^K$ be the direct image functor $\pi_*$ composed with the
functor of $K$-invariant sections $(\text{ - })^K$.  

\begin{prop}\label{adjtotriv}
	There is a natural isomorphism $\rm{Res}_S^K \simeq \pi_*^K\mu^*$.

\end{prop}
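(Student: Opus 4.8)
The plan is to unwind both sides and exhibit the natural isomorphism explicitly on the level of sheaves. First I would recall that for $S\subset K$ the diagram
\begin{equation*}
\xymatrix{
 S\times X \ar[r]^{j} \ar[d]_{\mu_S} & K\times X \ar[d]^{\pi} \\
 X \ar[r]^{\,\rm{id}} & X
}
\end{equation*}
does \emph{not} commute, but the point is that $\pi_*^K\mu^*$ is computing exactly the associated-bundle description of an equivariant sheaf. Concretely, for $\ms{V}\in\cl{M}(\ms{D}_\lambda,K)$, the sheaf $\mu^*\ms{V}$ on $K\times X$ carries a $K$-action (acting on the $K$-factor by left translation, and compatibly on the $X$-factor and on sections via the equivariant structure of $\ms{V}$), and $\pi_*^K$ takes its push-forward to $X$ along the projection, then the $K$-invariants. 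The resulting $\ms{O}_X$-module is $S$-equivariant because the residual $S\subset K$ acting by \emph{right} translation on the $K$-factor of $K\times X$ descends through $\pi$; this gives the $S$-equivariant structure on $\pi_*^K\mu^*\ms{V}$, together with the $\ms{D}_\lambda$-action inherited from $\ms{V}$.

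Next I would construct the comparison morphism. The equivariant structure isomorphism $\phi\colon\mu^*\ms{V}\tilde\to\pi^*\ms{V}$ identifies $\mu^*\ms{V}$ with $\pi^*\ms{V}=\ms{O}_K\boxtimes\ms{V}$, but one must track the $K$-actions carefully: under $\phi$, the left-translation $K$-action on $\mu^*\ms{V}$ corresponds on $\pi^*\ms{V}$ to the action where $K$ acts on the $\ms{O}_K$-factor by left translation \emph{and} on $\ms{V}$ by $\nu$ simultaneously (this is the cocycle condition on $\phi$). Taking $\pi_*$ and then $K$-invariants of $\ms{O}_K\boxtimes\ms{V}$ with this diagonal action, evaluation at $1\in K$ gives an $\ms{O}_X$-module isomorphism onto $\ms{V}$ itself — this is just the statement that an equivariant function $F\colon K\to\ms{V}$ with $F(kh)=\nu(h)^{-1}F(k)$ is determined by $F(1)$. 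So the composite $\pi_*^K\mu^*\ms{V}\xrightarrow{\pi_*^K\phi}\pi_*^K\pi^*\ms{V}\xrightarrow{\rm{ev}_1}\ms{V}$ is an isomorphism of $\ms{D}_\lambda$-modules, and I would check it intertwines the $S$-equivariant structures: the $S$-action on the left (right translation on the $K$-factor) is carried by $\rm{ev}_1$ precisely to the restriction $\rm{Res}_S^K$ of the $K$-action $\nu$ on $\ms{V}$, again by the cocycle identity for $\phi$.

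Finally I would note naturality: a morphism $f\colon\ms{V}\to\ms{W}$ of Harish-Chandra sheaves induces $\mu^*f$, hence $\pi_*^K\mu^*f$, and the evaluation-at-$1$ isomorphisms fit into the obvious commuting square, so the isomorphism $\pi_*^K\mu^*\simeq\rm{Res}_S^K$ is natural. To extend from $\cl{M}(\ms{D}_\lambda,K)$ to the complex categories $\cl{C}(\ms{D}_\lambda,K)$ one applies the above termwise and observes that all constructions commute with the homotopy data $i$ (since $i$ lives among $\ms{D}_\lambda$-linear, $K$-equivariant maps, it is carried through $\mu^*$ and $\pi_*^K$ verbatim). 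The main obstacle — really the only delicate point — is bookkeeping the two commuting $K$-actions on $\mu^*\ms{V}$ (one from left translation on $K\times X$, one from the equivariant structure of $\ms{V}$) so that the residual right-$S$-translation action correctly produces $\rm{Res}_S^K$ after taking invariants; once the cocycle identity for $\phi$ is written out this is mechanical, so I would state it cleanly and leave the diagram-chase to the reader.
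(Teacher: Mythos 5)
Your proposal is correct and follows essentially the same route as the paper: reduce to the abelian category $\cl{M}(\ms{D}_\lambda,K)$ by exactness of $\mu^*$ and $\pi_*^K$, push the equivariant structure isomorphism $\phi\colon \mu^*\ms{V}\tilde{\to}\pi^*\ms{V}$ through the equivalence given by $\pi_*^K$ (your evaluation-at-$1$ description is the same fact the paper invokes via $\pi_*^K$ being inverse to $\pi^*$), and then verify the $S$-structure compatibility and naturality (the paper does the $S$-check by pulling back along $j\colon S\times X\to K\times X$ and the shear morphism rather than by evaluating the cocycle at the identity, but this is the same bookkeeping). No gap to report.
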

\proof
	It is enough to prove the proposition for $\cl{M}(\ms{D}_\lambda, K)$, since $\mu^*$ 
	and $\pi_*^K$ are exact.  	Let $\ms{V}\in\cl{M}_{w}(\ms{D}_\lambda,K)$. 
	There is an isomorphism $\phi: \mu^*\ms{V} \tilde{\to} \pi^*\ms{V}$ 
	defining the $K$-action.  Additionally, the functor $\pi_*^K$ is the inverse to 
	$\pi^*$ in the equivalence 
	\begin{equation*}\cl{M}(\ms{O}_X, S)\simeq \cl{M}(\ms{O}_{K\times X},K\times S).\end{equation*}
	Therefore, the equivalence $\phi$ pushes down to an isomorphism $\pi_*^K(\phi)$ from  
	$\pi_*^K\mu^*\ms{V}$ to  $\rm{Res}_S^K\ms{V}$.    Let 
	$\pi_S,\mu_S: S\times X\to X$ be the projection and action morphisms, and let $j: 
	S\times X\to K\times X$ be induced from the inclusion $S\to K$.  Then we have 
	 $\mu_S = \mu\circ j$ and likewise $\pi_S=\pi\circ j$.  Moreover, there is a shear 
	 morphism  $s: K\times X\to K\times X$ such that we have $\mu = \pi\circ s$.  
	 Consequently,we have
$		\pi_S^*\pi_*^K = j^*$. 
	Thus the isomorphisms $\pi_S^*\pi_*^K(\phi)$ and $j^*(\phi)$  from $\mu_S^*\ms{V}$
	to $\pi_S^*\ms{V}$ are equal.
		
	Now, let $f: \ms{V}\to \ms{W}$ be any morphism in $\cl{M}_{w}(\ms{D}_\lambda,K)$.  
	It restricts to a morphism in $\cl{M}(\ms{D}_\lambda, S^w)$.  Then, the equalities
$		f\circ \pi_*^K(\phi) = 	\pi_*^K(\pi^*(f)\circ \phi)
		= \pi_*^K(\phi)\circ \pi_*^K\mu^*(f)$
	complete the proof.
\endproof

%%%%%%%%%%
\subsection{The Geometric Zuckerman Functor}\label{gzfunctsec}
%%%%%%%%%%

As in the algebraic setting, the geometric restriction functor $\rm{Res}_S^K$ has a 
right adjoint.
We will construct the geometric Zuckerman functor $\G_{K,S}^{\rm{geo}}$ and 
prove that it is the right adjoint to $\rm{Res}_S^K$.

Let $X$ be a $K$-variety and $S$ a closed subgroup of $K$.  
Lemma 1.8.6 of \cite{bbjantzen} states there is an equivalence
\begin{equation*}\cl{M}_{w}(\ms{D}_\lambda,K) \simeq \cl{M}(\ms{D}_\lambda\otimes_\CC\cl{U}(\fr{k}), K).\end{equation*}
Also in \cite{bbjantzen}, given a free $K$-action on $K\times X$, let 
$q: K\times X\to X$ be the quotient map.  There is then a pair of equivalences
   \begin{equation}\label{weak}
      \xymatrix{
       \cl{M}_{w}(\ms{D}^q_\lambda,K) \ar@<.5ex>[r]^{q_*^K} & 
       \cl{M}(\ms{D}_\lambda\otimes\cl{U}(\fr{k})) \ar@<.5ex>[l]^{q^\circ}
       }\;\; \text{ and } \end{equation}
   
         \begin{equation}\label{strong}
         \xymatrix{
       \cl{M}(\ms{D}_\lambda^q, K) \ar@<.5ex>[r]^>>>>>{q_*^K} & 
       \cl{M}(\ms{D}_\lambda). \ar@<.5ex>[l]^<<<<<{q^\circ}
       }\end{equation}
\noindent  
The equivalence (\ref{weak}) generates an equivalence for weakly 
equivariant complexes 
\begin{equation*}
    \xymatrix{
        \cl{C}_{w}(\ms{D}^q_\lambda,K)\ar@<.5ex>[r]^{q_*^K} & 
        \cl{C}(\ms{D}^q_\lambda\otimes\cl{U}(\fr{k})). \ar@<.5ex>[l]^{q^\circ}
    }
\end{equation*}

We can naturally extend these equivalences to include (weakly) $S$-equivariant
sheaves whenever $q$ is a $S$-equivariant morphism and the $S$-action on
$K\times X$ commutes with the $K$-action. 
 Let $\pi$ and $\mu: K\times X\to X$ be the usual projection and action maps, respectively.  
The product $K\times S$ acts on $K\times X$ by 
$   (k',s)(k,x) = (k'ks^{-1},sx)$  for all $(k, x)\in K\times X$ and $(k',s)\in K\times S$.
There is also a $K\times S$-action on $X$ with $K$ acting trivially and $S$ acting by the 
restriction of the original $K$-action.  With these actions on $K\times X$ and $X$, the map 
$\pi$ is $K\times S$-equivariant.  Similarly, for the $K\times S$-action on $X$ given by the 
$\mu$-action of $K$ and the trivial $S$-action, the morphism $\mu$ is $K\times S$-equivariant.
If $S$ is trivial, we can recover the situation of \cite{bbjantzen} described above
 by letting $q=\pi$ or $\mu$.

The $S$-equivariance of $\pi$ and the fact that the
inverse image $\pi^\circ$ is an exact functor from $\cl{M}(\ms{D}_\lambda, S^{(w)})$ to 
$\cl{M}(\ms{D}_\lambda^\pi, K\times S^{(w)})$ imply that $\pi^\circ$ extends to a
functor of  derived equivariant categories
\begin{equation*}
   \pi^\circ: \rm{D}(\ms{D}_\lambda, S^{(w)})\to \rm{D}(\cl{M}(\ms{D}_\lambda^\pi, K), S^{(w)}).
\end{equation*}
Since $\ms{D}_\lambda$ is a $K$-equivariant $\ms{O}_X$-module on $X$, 
there is a canonical isomorphism between
	$\mu^*\ms{D}_\lambda$ and $\pi^*\ms{D}_\lambda$.
Consequently, we have an induced isomorphism 
$\ms{D}_\lambda^\mu\simeq \ms{D}_\lambda^\pi$ 
of their respective sheaves of differential endomorphisms.  Therefore, there is a natural
isomorphism of categories
   $\cl{M}(\ms{D}_\lambda^\mu) = \cl{M}(\ms{D}_\lambda^\pi)$.  

We use the above equivalences to motivate our construction of the geometric Zuckerman 
functor. The direct image $\mu_*^K$ above does not land in the category of strongly
 $K$-equivariant sheaves, since it is not clear what the $\ms{D}$-module structure is away
 from the $K$-equivariant sections.  The $\ms{D}$-module direct image $\mu_+$ 
 corrects for this problem, as we show in the proposition below. 
\begin{prop} The $\ms{D}$-module direct image functor $\mu_+$ takes
      $\cl{M}(\ms{D}_\lambda^\mu, K\times S^{(w)})$ to 
     $ \cl{C}^\rm{b}(\cl{M}(\ms{D}_\lambda, S^{(w)}), K)$ and
      $\cl{C}^\rm{b}(\cl{M}(\ms{D}_\lambda^\mu, K), S^{(w)})$ to 
      $\cl{C}^\rm{b}(\ms{D}_\lambda, K\times S^{(w)})$.
\end{prop}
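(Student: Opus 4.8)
The plan is to pin down $\mu_+$ through its explicit relative de~Rham description and then transport the two equivariance data across that description.

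First I would observe that $\mu\colon K\times X\to X$ is a surjective submersion with $K$-torsor fibers which is moreover \emph{affine} ($K$ being an affine algebraic group): writing $s\colon K\times X\to K\times X$, $(k,x)\mapsto(k,kx)$, for the shear isomorphism, one has $\mu=\pi\circ s$, and $\pi$ is affine. Hence $\rm{\bf R}\mu_*=\mu_*$ on quasi-coherent sheaves, so by the formula for the direct image of a surjective submersion recorded in \S2,
\begin{equation*}
  \mu_+(\ms V)=\mu_*\bigl(\Omega^\bul_{K\times X/X}(\ms D_\lambda^\mu)\otimes_{\ms D_\lambda^\mu}\ms V\bigr)[d_K]
\end{equation*}
is represented by an honest complex, concentrated in degrees $-d_K,\dots,0$, whose terms are quasi-coherent $\ms D_\lambda$-modules since $\mu_*$ preserves quasi-coherence along an affine map. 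This settles the boundedness and identifies the underlying complex of weak Harish-Chandra sheaves in both assertions (the second producing a double complex, to be totalized).

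Next I would produce the homotopy $i$. Trivializing $\ms T_{K\times X/X}\simeq\ms O_{K\times X}\otimes_\CC\fr k$ by the fundamental vector fields of the $K$-direction --- along which the twist of $\ms D_\lambda^\mu$ is trivial --- the relative de~Rham complex $\Omega^\bul_{K\times X/X}(\ms D_\lambda^\mu)$ becomes a constant-coefficient complex $\ms D_\lambda^\mu\otimes_\CC\wedge^\bul\fr k$ with the Koszul differential of $\cl N(\fr k)$. Pushing forward, the interior multiplications $\xi\wedge(-)$ on $\wedge^\bul\fr k$ then induce degree $-1$ endomorphisms $i_\xi$ of $\mu_+(\ms V)$, and one checks by a formal computation with the de~Rham differential that $i=(i_\xi)$ satisfies axioms (1)--(4) of an equivariant Harish-Chandra complex for the $\ms D_\lambda$- and $K$-actions of the next step; the key relation $di_\xi+i_\xi d=\omega(\xi)$ is precisely the Cartan formula for the relative de~Rham differential, with $\omega=\nu-\pi$.

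Finally, for the equivariance: $\mu$ is $K\times S$-equivariant for the action $(k',s)(k,x)=(k'ks^{-1},sx)$ on $K\times X$, the $K$-factor going over through the shear to the original $K$-action on $X$ and the $S$-factor projecting to the genuine $S$-action, and $\ms D_\lambda^\mu$ inherits a $K\times S$-equivariant structure from that of $\ms D_\lambda$. Functoriality of the $\ms D$-module direct image along an equivariant morphism then equips $\mu_+(\ms V)$ with a $K\times S$-equivariant structure compatible with $\ms D_\lambda$; separating the factors, the $K$-part together with $i$ makes $\mu_+(\ms V)$ an equivariant Harish-Chandra complex for $K$ while the $S$-part survives on each term as the residual (weak or strong) $S$-Harish-Chandra structure, giving $\mu_+(\ms V)\in\cl C^{\rm{b}}(\cl M(\ms D_\lambda,S^{(w)}),K)$, and for the second assertion one starts from a complex already carrying an $S^{(w)}$-homotopy, forms and totalizes the de~Rham double complex, and combines the two homotopies to land in $\cl C^{\rm{b}}(\ms D_\lambda,K\times S^{(w)})$. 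The main obstacle is exactly the compatibility bookkeeping in this last step: one must check that the $K$-equivariant structure from direct-image functoriality is compatible, in the strong sense where required, with the homotopy $i$ built from the de~Rham/Chevalley--Eilenberg identification --- i.e.\ that axiom (4), $di_\xi+i_\xi d=\nu(\xi)-\pi(\xi)$, holds on the nose rather than up to homotopy --- and that the $K$- and $S$-structures remain independent, which rests on the commuting of the two actions on $K\times X$ and on the transversality of the $\mu$-fiber direction to the $S$-action; the totalization and signs in the second assertion are routine but must match the Hom-complex and homotopy conventions used throughout.
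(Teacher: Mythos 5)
Your proposal is correct and takes essentially the same route as the paper: the paper's proof likewise uses the relative de~Rham/Koszul identification $\mu_+(\text{ - })=\mu_*(\text{ - }\otimes_\CC\wedge^\bul\fr{k})=\mu_*(\text{ - })\otimes_{\cl{U}(\fr{k})}\cl{N}(\fr{k})$, obtained from $\cl{T}_{K\times X/X}\simeq\ms{O}_{K\times X}\otimes_\CC\fr{k}$, and defines the homotopy $i$ to be the one coming from $\cl{N}(\fr{k})$. The points you spell out --- affineness of $\mu$ giving an honest bounded complex, the Cartan-formula check of axiom (4), and the $K\times S$-equivariance bookkeeping --- are exactly the details the paper leaves implicit (only note that the operation $\xi\wedge(\text{ - })$ you use is exterior, not interior, multiplication, matching the paper's convention for $i_\xi$ on $\cl{N}(\fr{k})$).
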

\proof
	We need only construct a homotopy of the $\fr{k}$-actions
	\begin{equation*}
		i: \fr{k} \to \rm{Hom}_{\ms{D}_\lambda}(\mu_+\ms{V},\mu_+\ms{V}[-1])
	\end{equation*}
	for every object $\ms{V}\in \cl{M}(\ms{D}_\lambda^\mu, K\times S^{(w)})$.  
	Since $\mu$ is a surjective submersion, the direct image functor $\mu_+$ is equal to
$\mu_* ( \text{ - }\otimes_{\ms{O}_{K\times X}}\cl{T}^\bul_{K\times X/X})$.
	The sheaf $\cl{T}_{K\times X} $ equals the exterior tensor product 
	$\cl{T}_K\boxtimes \cl{T}_X$, and therefore the relative sheaf of differentials
	\begin{equation*}
		\cl{T}_{K\times X/X} \simeq \pi_K^*\cl{T}_K.
	\end{equation*}
	Moreover, $K$ is affine so $\cl{T}_K = \ms{O}_K\otimes_{\CC} \fr{k}$ and hence
	the inverse image is 
	$\pi_K^*\cl{T}_K^\bul= \ms{O}_{K\times X}\otimes_{\CC} \wedge^\bul\fr{k}$.
	Namely,  we have equalities
	\begin{equation*}
		\mu_+(\text{ - }) = \mu_*(\text{ - }\otimes_{\CC}\wedge^\bul\fr{k}) = 
		\mu_*(\text{ - })\otimes_{\cl{U}(\fr{k})}\cl{N}(\fr{k}).
	\end{equation*}
	Define the homotopy map $i$ to be that coming from $\cl{N}(\fr{k})$.  
	
\endproof

Define the functor of $K$-invariant sections on $\cl{C}_{(w)}(\ms{D}_\lambda,K)$ by
\begin{equation*}
    (\text{ - })^K = (\text{ - })^{(K, \cl{N})}: \cl{C}_{(w)}(\ms{D}_\lambda,K) \to \cl{C}(\ms{D}_\lambda),
\end{equation*}
where $\cl{N}=\cl{U}(\fr{k})$ in the case of weakly equivariant complexes, and $\cl{N}(\fr{k})$
for equivariant complexes. By $\cl{N}(\fr{k})$-invariants,
we mean $\cl{U}(\fr{k})$-invariants which are also invariant for the homotopy map $i$.   
   The invariants functor $(\text{ - })^K$ is right adjoint to the trivial inclusion 
   	$\rm{Triv}_{(w)}$ from  $\cl{C}(\ms{D}_\lambda)$ into $\cl{C}_{(w)}(\ms{D}_\lambda,K)$.

\begin{defn}
   For $(\ast)=\cl{C}$ or $\cl{K}$, define the geometric Zuckerman functor from  
   $(\ast)^{\rm{b}}(\ms{D}_\lambda, S)$ to $(\ast)^{\rm{b}}(\ms{D}_\lambda, K)$ to be
   \begin{equation*}
       \G_{K, S}^\rm{geo} : = \mu_+^S\pi^\circ [-d_K].
   \end{equation*}
\end{defn}
\noindent By the reduction principle, the geometric Zuckerman functor   $\G_{K, S}^\rm{geo}$ is also defined for derived equivariant categories.

\begin{prop}
   The geometric Zuckerman functor $\G_{K, S}^\rm{geo}$ is right adjoint to $\rm{Res}_S^K$.  
\end{prop}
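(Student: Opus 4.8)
The plan is to establish the adjunction $(\rm{Res}_S^K, \G_{K,S}^\rm{geo})$ by composing three adjunctions corresponding to the three pieces of the definition $\G_{K,S}^\rm{geo} = \mu_+^S \pi^\circ[-d_K]$, using Proposition \ref{adjtotriv} as the organizing principle. The key observation is that $\rm{Res}_S^K \simeq \pi_*^K \mu^*$, so what must be shown is that $\mu_+^S\pi^\circ[-d_K]$ is right adjoint to $\pi_*^K\mu^*$. First I would note that $\mu^*$ (as a functor of sheaves with group actions) has right adjoint $\mu_*^K$, and dually $\pi^\circ$ should be right adjoint to some pushforward; the shift $[-d_K]$ is precisely there to convert the $\ms{D}$-module direct image into a genuine right adjoint, since as remarked in \S 2 the functor $f_+[-d_{Y/X}]$ is right adjoint to $f^\circ$ for a surjective submersion $f$ (here applied to $f=\mu$, with $d_{K\times X/X}=d_K$). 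So at the level of the underlying weakly equivariant categories the adjunction is assembled from: $\mu^* \dashv \mu_*^K$ on $\ms{O}$-modules with equivariant structure, and $\pi^\circ \dashv \pi_+[-d_K]$ on $\ms{D}$-modules.

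Next I would handle the group-theoretic bookkeeping. The point is to track the $(\fr{g},K)$- versus $(\fr{g},S)$-equivariance through the chain $\cl{M}(\ms{D}_\lambda,S) \xrightarrow{\pi^\circ} \cl{M}(\ms{D}_\lambda^\pi,K\times S) \xrightarrow{\mu_+} \cl{C}^\rm{b}(\ms{D}_\lambda,K\times S) \xrightarrow{(-)^S} \cl{C}^\rm{b}(\ms{D}_\lambda,K)$, using the $K\times S$-equivariance of $\pi$ and $\mu$ described in the text and the proposition computing $\mu_+$ on these categories. The functor $(-)^S$ of $S$-invariant sections is right adjoint to the trivial-$S$-action inclusion, just as $(-)^K$ is right adjoint to $\rm{Triv}$; combining this with the $\mu^* \dashv \mu_*^K$ adjunction on the $K$-factor gives the claim for $\mu_+^S$ in the same way Proposition \ref{adjtotriv} was proved, namely that $\mu_+^S$ restricted to the appropriate invariants realizes the desired right adjoint, with the $\ms{D}$-module direct image correcting the $\ms{D}_\lambda$-structure away from the invariant sections. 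Then the composite of right adjoints is right adjoint to the composite of left adjoints, and $\pi_*^K\mu^* \simeq \rm{Res}_S^K$ by Proposition \ref{adjtotriv}, so $\mu_+^S\pi^\circ[-d_K] \dashv \rm{Res}_S^K$... wait --- one must be careful about which side of the adjunction each piece sits on, so I would phrase everything consistently so that $\rm{Res}_S^K = \pi_*^K\mu^*$ is the \emph{composite of left adjoints} read in the correct order, and $\G_{K,S}^\rm{geo} = \mu_+^S\pi^\circ[-d_K]$ is the composite of the corresponding right adjoints in the reversed order.

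**The main obstacle** I expect is not the underlying weakly equivariant adjunction but the passage to the \emph{equivariant} (homotopy/derived) categories where the homotopy map $i$ must be respected: one must check that the unit and counit of the adjunction are morphisms of equivariant complexes, i.e. that they commute with the structure maps $i_\xi$. This is exactly the subtlety flagged in the Reduction Principle subsection, and it is why $\rm{Ind_h}$ was introduced. So the argument has two layers: first establish the adjunction for $(\ast)_{w}$ and $(\ast)$ with $(\ast)=\cl{C}$ (where the Hom-complex formalism makes the homotopy-compatibility a direct check, since $\mu_+$ was defined to carry the homotopy coming from $\cl{N}(\fr{k})$), then invoke the reduction principle — the existence of enough $\cl{K}$-injectives in $\cl{C}_{w}(\ms{D}_\lambda,K)$ and the fact that $\rm{Ind_h}$ preserves them — to descend the adjunction from $\cl{C}$ through $\cl{K}$ to $\rm{D}$. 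I would therefore organize the proof as: (i) identify $\G_{K,S}^\rm{geo}$ on the nose with a composite of functors each of which is visibly a right adjoint on weakly equivariant categories; (ii) assemble the composite adjunction and match the left adjoint to $\rm{Res}_S^K$ via Proposition \ref{adjtotriv}; (iii) verify compatibility with the homotopy maps $i_\xi$ on $\cl{C}(\ms{D}_\lambda,-)$ using the explicit formulas for $i$ on $\pi^\circ$, $\mu_+$, and invariants; (iv) lift to $\cl{K}$ and $\rm{D}$ by the reduction principle.
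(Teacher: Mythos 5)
Your proposal is correct and follows essentially the same route as the paper: write $\rm{Res}_S^K\simeq \pi_*^K\mu^\circ$ (Proposition \ref{adjtotriv}), use that $\pi_*^K$ and $\pi^\circ$ are quasi-inverse, and reduce everything to the composite adjunction $\mu^\circ\circ\rm{Triv}\dashv \mu_+^S[-d_K]$ obtained from the surjective-submersion adjunction for $\mu$ together with $\rm{Triv}\dashv(\text{ - })^S$, the lift to $\cl{K}$ and $\rm{D}$ being handled by the reduction principle. The only wobble is your initial summary pairing ($\mu^*\dashv\mu_*^K$ and $\pi^\circ\dashv\pi_+[-d_K]$), which crosses the maps relative to the composite you need, but your subsequent correction --- $\rm{Res}_S^K$ as the composite of left adjoints and $\G_{K,S}^\rm{geo}$ as the reversed composite of right adjoints, with the shift $[-d_K]$ attached to $\mu$ --- is exactly the paper's argument.
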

\proof
	Recall there is a natural isomorphism
	\begin{equation*}\rm{Res}_S^K \simeq \pi_*^K\mu^\circ \end{equation*}
	and note $\pi_*^K$ is the inverse to $\pi^\circ$.    Therefore, we need only show
	$\mu^\circ$ is left adjoint to  $ \mu_+^S[-d_K]$.
	Since $\mu$ is smooth, we know $\mu^\circ \dashv \mu_+^S[-d_K]$ is 
	an adjoint pair when $S$ is trivial.  
	Additionally, Proposition \ref{adjtotriv}.\ shows that
	$(\text{ - })^S$ is right adjoint to $\rm{Triv}_{(w)}$.  
	The restriction functor $\rm{Res}_S^K$ factors through $\cl{C}(\ms{D}_\lambda, K\times S)$
	by the functor $\rm{Triv}$.  In fact, we should have defined $\rm{Res}_S^K~=~\pi_*^K\mu^\circ\circ \rm{Triv}$,
	in which case it is clear that
	\begin{equation*}
		\mu^\circ\circ \rm{Triv} \dashv \mu_+^S[-d_K]
	\end{equation*}
	as a composition of two adjoint pairs.  
\endproof

Before moving on to examine properties of $\G_{K,S}^\rm{geo}$, it will be useful 
to generalize the equivalences (\ref{weak}) and (\ref{strong}) to derived equivalences for 
equivariant complexes.

\begin{lem}\label{equivariance}
   For $K$ acting freely on $Z$ and $\ms{D}$ a sheaf of twisted differential operators on 
   the quotient $Z/K$, there is an equivalence of categories
   \begin{equation*}
      \xymatrix@C=4pc{
          \cl{C}(\ms{D}^q, K) \ar@<.5ex>[r]^<<<<<<<<<{q_+^K[-d_K]} & \cl{C}(\ms{D}). \ar@<.5ex>[l]^>>>>>>>>>{q^\circ}
          }
   \end{equation*}
\end{lem}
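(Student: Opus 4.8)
The plan is to upgrade the abelian equivalence (\ref{strong}) to a derived equivalence, using the explicit form of the direct image along $q$ established above together with the reduction principle; as noted just before the statement, the Lemma is to be read as a derived equivalence for equivariant complexes, the displayed functors being defined at the chain level and inducing quasi-inverse equivalences on $\rm D$. Since $K$ acts freely, $q: Z\to Z/K$ is a principal $K$-bundle; it is therefore smooth, locally trivial, and --- its fibres being copies of the affine group $K$ --- affine, and the $K$-action trivializes the relative tangent sheaf, $\cl T_{Z/(Z/K)}\simeq\ms O_Z\otimes_\CC\fr k$. Smoothness makes $q^\circ$ exact, so it prolongs termwise to a functor $\cl C(\ms D)\to\cl C(\ms D^q,K)$; the pullback of a module from the base is strongly $K$-equivariant, so $q^\circ$ carries the zero homotopy. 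Affineness makes $q_*$ exact with no higher direct images, and --- exactly as in the Proposition above computing $\mu_+$, the argument being local over $Z/K$ and so reducible to the case $q=\mu: K\times U\to U$ --- one gets $q_+(\text{ - })=q_*(\text{ - }\otimes_\CC\wedge^\bul\fr k)=q_*(\text{ - })\otimes_{\cl U(\fr k)}\cl N(\fr k)$ with the homotopy $i$ inherited from $\cl N(\fr k)$. Thus $q_+^K[-d_K](\text{ - })=\bigl(q_*(\text{ - })\otimes_{\cl U(\fr k)}\cl N(\fr k)\bigr)^{(K,\cl N)}[-d_K]$ is a functor $\cl C(\ms D^q,K)\to\cl C(\ms D)$, assembled from the exact pushforward $q_*$, tensoring with the standard complex $\cl N(\fr k)$ as in the construction of $\rm{Ind_h}$, and the functor of $(K,\cl N)$-invariants; since $\cl C_w(\ms D^q,K)$ has enough $\cl K$-injectives, the reduction principle lets it descend to the homotopy and derived categories.

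\emph{Reduction to (\ref{strong}) on the heart.} On the abelian categories $q^\circ$ and $q_*^K$ are mutually quasi-inverse exact equivalences $\cl M(\ms D)\simeq\cl M(\ms D^q,K)$. I would next check that $q_+^K[-d_K]$, restricted to $\cl M(\ms D^q,K)\subset\cl C(\ms D^q,K)$, is again quasi-inverse to $q^\circ$, hence canonically isomorphic to $q_*^K$: the complex $q_*(\ms V\otimes_\CC\wedge^\bul\fr k)$ attached to a Harish-Chandra sheaf $\ms V$ is fibrewise the de Rham complex of a $K$-equivariant integrable connection on $K$, so its $(K,\cl N)$-invariants are acyclic off degree $0$ --- this is just the underived content of (\ref{strong}), the Koszul factor $\cl N(\fr k)$ contributing only its homotopy-invariant part $\wedge^{d_K}\fr k\simeq\CC$ since $\fr k$ is reductive. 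An exact equivalence of abelian categories induces equivalences of the associated categories of complexes, homotopy categories, and derived categories, so at this point one already has a derived equivalence $\rm D(\cl M(\ms D^q,K))\simeq\rm D(\ms D)$ realized by $q^\circ$ and $q_+^K[-d_K]$.

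\emph{From Harish-Chandra sheaves to equivariant complexes.} It remains to identify $\rm D(\ms D^q,K)$ with $\rm D(\cl M(\ms D^q,K))$ --- for the free action of $K$, the equivariant derived category of Harish-Chandra sheaves should coincide with the naive one --- and to see that $q_+^K[-d_K]$ is the chain-level functor realizing this. Here the $\cl N(\fr k)$-factor in $q_+^K[-d_K]$ earns its keep: for an equivariant complex $(\ms V^\bul,i)$ the complex $q_*(\ms V^\bul\otimes_\CC\wedge^\bul\fr k)$ carries two homotopies, the one transported from $i$ and the Koszul one coming from $\cl N(\fr k)$, and the functor of $(K,\cl N)$-invariants collapses the Koszul direction while absorbing $i$, leaving a genuine $\ms D$-module complex; re-applying $q^\circ$ together with the counit $q^\circ q_*^K\to\rm{id}$ of (\ref{strong}) should then return $(\ms V^\bul,i)$, homotopy and all, up to canonical isomorphism. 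I expect this last comparison to be the main obstacle: carrying it out means matching, term by term and sign by sign, the de Rham resolution $\cl T^\bul_{Z/(Z/K)}(\ms D^q)$ against the Koszul resolution $\cl N(\fr k)$, so that the unit and counit of the adjunction between $q^\circ$ and $q_+^K[-d_K]$ come out to be quasi-isomorphisms. The finiteness inputs --- $q$ affine and smooth of relative dimension $d_K$, and $\wedge^{d_K}\fr k\simeq\CC$ --- are what keep the resolutions finite and guarantee that no further derived corrections intervene; the remainder is bookkeeping.
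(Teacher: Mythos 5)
Your proposal is correct and takes essentially the same route as the paper: compute $q_+$ along the submersion via the relative de Rham resolution so that $q_+^K[-d_K](\ms{V})\simeq (q_*\ms{V}\otimes_{\cl{U}(\fr{k})}\cl{N}(\fr{k}))^{K}[-d_K]$, identify this chain-level functor with $q_*^K$, and then conclude from the abelian equivalence of \cite{bbjantzen} together with the reduction principle. The comparison you single out as the ``main obstacle'' is precisely the isomorphism $(\text{ - }\otimes_{\cl{U}(\fr{k})}\cl{N}(\fr{k}))[-d_K]\simeq \rm{Hom}^\bul(\cl{N}(\fr{k}),\text{ - })$ already proved in \S 3 (the identification of $\rm{Ind_h}$ with $\rm{Ind'_h}$), which yields $(q_*\ms{V}\otimes_{\cl{U}(\fr{k})}\cl{N}(\fr{k}))^K[-d_K]=\rm{Hom}_{\fr{k}}(\cl{N}(\fr{k}),q_*\ms{V})^K=q_*^K(\ms{V})$ at once, so no further sign bookkeeping is required.
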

\proof
	By the reduction principle, the lemma follows from the equivalence
   \begin{equation*}
      \xymatrix{
          \cl{M}_{w}(\ms{D}^q,K) \ar@<.5ex>[r]^>>>>>{q_*^K} & \cl{M}(\ms{D}) \ar@<.5ex>[l]^>>>>>{q^\circ}
          }
   \end{equation*}
   whenever $q_+^K[-d_K] \simeq q_*^K$.
 Recall there are isomorphisms
 $q_+^K(\ms{V})[-d_K] \simeq (q_*(\ms{V})\otimes_{\cl{U}(\fr{k})}\cl{N}(\fr{k}))^K[-d_K]$
 and 
$  (q_*(\ms{V})\otimes_{\cl{U}(\fr{k})}\cl{N}(\fr{k}))^K[-d_K]
= \rm{Hom}_\fr{k}(\cl{N}(\fr{k}),q_*\ms{V})^K 
= q_*^K(\ms{V})$ from which the desired equivalence of abelian categories follows.
\endproof
\noindent This equivalence respects (weak) $S$-equivariance when the $S$-action commutes
with $K$ and $q$ is $S$-equivariant.

%%%%%%%%
\subsection{Properties of $\G_{K,S}^\rm{geo}$}
%%%%%%%%

In this section, we use the geometric Zuckerman functor $\G_{K,S}^\rm{geo}$
to construct standard modules on generalized flag varieties.  
We fix the following notations.  As above, let $(\fr{g},K)$ be a Harish-Chandra pair, 
let $X$ be a generalized flag variety of $\fr{g}$, let $Q$ be a $K$-orbit on $X$ and let $i: Q\to X$ be the inclusion.
Fix a point $x\in Q$ and let $i_x: x\to Q$ and $j_x=i\circ i_x$ denote the inclusions of $x$ to $Q$
and $X$ respectively.  Fix a homogeneous tdo $\ms{D}_\lambda$ on $X$ and let 
$\cl{D}_{[\lambda]}$ denote the global sections
of $\ms{D}_\lambda$, with $[\lambda]\in\fr{h}^*/\cl{W}$ the Weyl group orbit of 
$\lambda$.  Denote the stabilizer of $x$ in $K$ by $S_x$.

\begin{prop} The diagram below is commutative:
   \begin{equation*}
       \xymatrix{
          \rm{D^b}(\ms{D}_\lambda^i, S_x) \ar[r]^{\G_{K, S_x}^\rm{geo}} \ar[d]_{i_+} &
          \rm{D^b}(\ms{D}_\lambda^i, K) \ar[d]^{i_+} \\
          \rm{D^b}(\ms{D}_\lambda, S_x) \ar[r]^{\G_{K, S_x}^\rm{geo}} & \rm{D^b}(\ms{D}_\lambda, K).
       }
   \end{equation*}
\end{prop}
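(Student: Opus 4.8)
The plan is to reduce the commutativity of this square to a statement about the relative position of the two morphisms involved --- the orbit inclusion $i : Q \to X$ on one side, and the orbit map $\mu$ (and projection $\pi$) on $K \times X$ on the other --- and then invoke base change for $\ms{D}$-module direct image. Unwinding the definition $\G^{\rm{geo}}_{K,S_x} = \mu_+^{S_x}\pi^\circ[-d_K]$, the two composites around the square are $i_+ \circ \mu_+^{S_x}\pi^\circ[-d_K]$ and $\mu_+^{S_x}\pi^\circ[-d_K] \circ i_+$, where in the lower row $\mu$ and $\pi$ refer to the maps $K \times X \to X$, while in the upper row they refer to $K \times Q \to Q$. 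These fit into a commutative diagram of varieties
\begin{equation*}
   \xymatrix{
      K \times Q \ar[r]^{1 \times i} \ar[d]_{\mu_Q,\ \pi_Q} & K \times X \ar[d]^{\mu_X,\ \pi_X} \\
      Q \ar[r]^{i} & X
   }
\end{equation*}
which is cartesian for both $\pi$ (trivially) and $\mu$ (since $\mu_X^{-1}(Q) = K \times Q$, as $Q$ is a $K$-orbit hence $K$-stable). So the first step is: record that this square is a fiber square, separately for the $\pi$-version and the $\mu$-version.

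The second step is to apply base change. For $\pi$ the identity $i_+\pi_Q^\circ \simeq \pi_X^\circ i_+$ is immediate because $\pi$ is a smooth projection and the square is visibly a product, so $\pi^\circ = $ pullback commutes with $i_+$ by flat base change for $\ms{D}$-modules (here $i_+$ along $1 \times i$ versus along $i$; since $1\times i$ is just $i$ base-changed, and $\pi$ is flat, the projection formula / base change gives the isomorphism). For $\mu$, the square is cartesian and $\mu$ is a surjective submersion, so again flat base change applies: $\mu_{X+} (1\times i)_+ \simeq i_+ \mu_{Q+}$ as functors on $\rm{D^b}(\ms{D}_\lambda^{i}\ \text{on}\ K \times Q)$ --- here one uses the explicit de Rham resolution description of $\mu_+$ from Section 2 ($\mu_+(-) = \mu_*(-\otimes_\CC \wedge^\bullet \fr{k})$ since $\mu$ has affine $\fr{k}$-dimensional fibers), together with the fact that $(1\times i)_+$ commutes with $\mu_*$ in this situation because $1 \times i$ is a closed immersion transverse to the fibers of $\mu$. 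The shift $[-d_K]$ is formal and passes through. Finally, the $S_x$-equivariant invariants functor $(-)^{S_x}$ in $\mu_+^{S_x}$ commutes with $i_+$ because $i$ (hence $1 \times i$) is $S_x$-equivariant and all the base-change isomorphisms above are $S_x \times K$-equivariant; so one restricts the established isomorphism to $S_x$-invariant sections. Combining, $i_+ \G^{\rm{geo}}_{K,S_x} \simeq \G^{\rm{geo}}_{K,S_x} i_+$ on $\rm{D^b}(\ms{D}_\lambda^i, S_x)$.

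The main obstacle will be the base-change isomorphism for $\mu_+$ together with $(1\times i)_+$, i.e.\ making precise that pushing forward along the non-affine(!) orbit inclusion $i$ and then along the submersion $\mu$ agrees with doing it in the other order, at the level of complexes and compatibly with the equivariant homotopy data $i$ (the Koszul-type operator) carried by objects of $\cl{C}(\ms{D}_\lambda^\bullet, K\times S_x)$. The cleanest route is to use the de Rham / Spencer resolution presentation of $\mu_+$ established in Section 2, reducing $\mu_+$ to $\mu_*$ applied to a complex built functorially out of $\wedge^\bullet \fr{k}$; then the only genuine point is that $(1\times i)_*$ commutes with $\mu_*$, which follows from the commutativity of the underlying square of spaces and the fact that $1 \times i$ does not interact with the fiber direction of $\mu$ (the fibers of $\mu_X$ meet $K \times Q$ exactly in the fibers of $\mu_Q$). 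I would carry this out by checking it first on the abelian/weakly-equivariant categories, where it is a statement about quasi-coherent sheaves, and then lift to $\cl{C}^{\rm b}$ and to the equivariant derived categories by the reduction principle of Section~4, exactly as the shift $[-d_K]$ and the homotopy $i$ are built in uniformly. One should also note that $\ms{D}_\lambda^i$ restricted along $1\times i$ is the correct tdo on $K\times Q$ by the composition law $(\ms{D}^g)^f \simeq \ms{D}^{g\circ f}$ from Section~2, so the source categories in the square genuinely match up.
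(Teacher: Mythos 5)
Your proposal reaches the correct statement and its skeleton matches the paper's proof: the same square of varieties with $1\times i:K\times Q\to K\times X$ over $i:Q\to X$, the $\pi$-leg handled by flat base change (the $\pi$-square is a product, so $\pi_X^\circ i_+\simeq (1\times i)_+\pi_Q^\circ$), and the $S_x$-invariants passed through using $S_x$-equivariance of $i$. Where you diverge is the $\mu$-leg, and there you make the step harder than it is. The paper does not use base change for $\mu$ at all: since the square of maps commutes, $\mu_X\circ(1\times i)=i\circ\mu_Q$, and the functoriality of the $\ms{D}$-module direct image under composition, $(g\circ f)_+\simeq g_+f_+$, immediately gives $\mu_{X+}(1\times i)_+\simeq i_+\mu_{Q+}$; no cartesianness of the $\mu$-square, no transversality, and no exactness of $i_+$ is needed, which matters because $i$ is only a locally closed (possibly non-affine) immersion, not a closed one as you write. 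The ``main obstacle'' you identify --- making $(1\times i)_*$ commute with $\mu_*$ via the de Rham resolution and a transversality argument --- is therefore illusory: the identity you need for the underlying sheaf pushforwards is again just $\mu_{X*}(1\times i)_*=(\mu_X\circ(1\times i))_*=(i\circ\mu_Q)_*=i_*\mu_{Q*}$, and the gloss ``$1\times i$ does not interact with the fiber direction of $\mu$'' is not the mechanism and would be awkward to make precise. Your cartesianness observation $\mu_X^{-1}(Q)=K\times Q$ is correct and would let a genuine base-change theorem apply if one insisted, so your route can be repaired, but the clean argument --- and the paper's --- is: composition of direct images for the $\mu$-leg, base change only for the $\pi$-leg, equivariance of $i$ for the $S_x$-invariants, then the reduction principle to lift to the equivariant derived categories exactly as you say.
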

\proof
	Consider the diagram
		\begin{equation*}
			\xymatrix{
				K\times Q \ar[r]^i \ar[d]_\mu & K\times X \ar[d]^\mu	 \\
				Q\ar[r]^i & X.
			}
		\end{equation*}
		Then, $i_+\mu_+= \mu_+i_+$ and moreover, $i_+\mu_+^{S_x} = \mu_+^{S_x}i_+$
		since $i$ is $S_x$-equivariant.  Base change for $\ms{D}_\lambda$-modules allows us
		to commute $i_+$ past $\pi^\circ$.  
\endproof

\begin{prop}\label{equicommprop}
   The diagram below is commutative:
   \begin{equation*}
      \xymatrix{
                 \rm{D^b}(\ms{D}_\lambda, S_x) \ar[r]^{\G_{K, S_x}^\rm{geo}} \ar[d]_{\rm{\bf R}\G} &
          \rm{D^b}(\ms{D}_\lambda, K) \ar[d]^{\rm{\bf R}\G} \\
          \rm{D^b}(\cl{D}_{[\lambda]}, S_x) \ar[r]^{\G_{K, S_x}^\rm{equi}} 
          & \rm{D^b}(\cl{D}_{[\lambda]}, K).
      }
   \end{equation*}
 \end{prop}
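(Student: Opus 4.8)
The plan is to reduce the statement to the commutativity of the building blocks out of which both $\G^{\rm{geo}}_{K,S_x}$ and $\G^{\rm{equi}}_{K,S_x}$ are assembled, and then to transport everything across $\mathrm{\bf R}\G$ using the standard compatibilities of the basic $\ms{D}$-module functors with global sections. Recall that $\G^{\rm{geo}}_{K,S_x} = \mu_+^{S_x}\pi^\circ[-d_K]$, where $\mu,\pi\colon K\times X\to X$ are the action and projection maps, while $\G^{\rm{equi}}_{K,S_x}(V^\bul) = \rm{Hom}^\bul(\cl{N}(\fr{k}), R(K)\otimes V^\bul)^{(\fr{k},S_x)}$, which by the computation in \S 3 (and the isomorphism $\rm{Ind}_h\simeq \rm{Ind}'_h$ together with Lemma \ref{equivariance}) can equally be written as a $K$-invariants of a tensor product of $R(K)\otimes V^\bul$ with $\cl{N}(\fr{k})$, shifted by $-d_K$. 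So both functors have the shape ``twist by the standard complex of $\fr{k}$, push forward along $\mu$, take invariants,'' one geometrically on $X$ and one algebraically on global sections.

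First I would identify the two sides step by step. For the inverse image $\pi^\circ$: since $\pi\colon K\times X\to X$ is a smooth surjection with affine fibers isomorphic to $K$, we have $\pi_*\pi^\circ(\ms{V}) \simeq R(K)\otimes_\CC \G(X,\ms{V})$ compatibly with the $(\fr{k},S_x)$- and $(\cl{A},K)$-actions described in \S 3 — this is essentially the statement that global sections of $\ms{D}_\lambda^\pi$-modules recover $\cl{D}_{[\lambda]}\otimes\cl{U}(\fr{k})$ tensored with $R(K)$, using that $\ms{D}_\lambda^\pi\simeq\ms{D}_\lambda^\mu$ and the equivalence $\cl{M}_w(\ms{D}_\lambda,K)\simeq\cl{M}(\ms{D}_\lambda\otimes\cl{U}(\fr{k}),K)$ cited from \cite{bbjantzen}. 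Next, for the direct image $\mu_+$: because $\mu$ is a surjective submersion, the proposition preceding this one gives $\mu_+(\text{-}) = \mu_*(\text{-})\otimes_{\cl{U}(\fr{k})}\cl{N}(\fr{k})$, with the homotopy $i$ coming from $\cl{N}(\fr{k})$. Then $\mathrm{\bf R}\G\circ\mu_* \simeq \mathrm{\bf R}\G$ (composition of derived pushforwards, since $\G(X,\mu_*\mathcal{F})=\G(K\times X,\mathcal{F})$ and $X$ plays the role of the final object), and $\mathrm{\bf R}\G$ commutes with the exact operations $\otimes_{\cl{U}(\fr{k})}\cl{N}(\fr{k})$ (a finite complex of free $\cl{U}(\fr{k})$-modules) and with the left-exact functor of $K$-invariants in the derived sense. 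Assembling these identifications: $\mathrm{\bf R}\G\circ\mu_+^{S_x}\circ\pi^\circ[-d_K]$ applied to $\ms{V}\in\rm{D^b}(\ms{D}_\lambda,S_x)$ becomes $\big(\rm{Hom}^\bul(\cl{N}(\fr{k}), R(K)\otimes\mathrm{\bf R}\G(\ms{V}))\big)^{(\fr{k},S_x)}[-d_K]$ after passing through the $\rm{Ind}_h\simeq\rm{Ind}'_h$ rewriting — which is exactly $\G^{\rm{equi}}_{K,S_x}(\mathrm{\bf R}\G(\ms{V}))$.

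The argument is cleanest phrased at the level of the abelian / weakly equivariant categories and then lifted by the reduction principle of \S 4: each functor in sight ($\pi^\circ$ exact, $\mu_*^{S_x}$ expressible via a finite complex, $(\text{-})^K$ left exact of finite cohomological dimension) has the finiteness needed for the derived lift, and $\mathrm{\bf R}\G$ on the equivariant $\ms{D}$-module category is computed by the same $\cl{K}$-injective resolutions whose existence was established in Proposition following Lemma \ref{enoughinj}. One must keep track of which resolution computes $\mathrm{\bf R}\G$ on $K\times X$ versus on $X$ and check that $\mu_*$ carries one to the other (Leray); this is routine since $\mu$ is affine-ish enough that $\mu_*$ has finite cohomological dimension and one can use $\mu_*$-acyclic representatives.

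The main obstacle I expect is bookkeeping the $(\cl{A},K)$-, $(\fr{k},S_x)$-action and homotopy data through all the identifications — in particular verifying that the $K$-action coming from the geometric side ($\nu_\G(h)F)(k)=F(h^{-1}k)$ via translation on $K\times X$) matches the action written in \S 3 for $\G^{\rm{equi}}$, and that the homotopy $i_\xi$ induced from $\cl{T}_{K\times X/X}\simeq\pi_K^*\cl{T}_K$ in the $\mu_+$ formula is literally the $i_\xi$ built into $\cl{N}(\fr{k})$ in the definition of $\G^{\rm{equi}}_{K,S_x}$. Once those action-and-homotopy matchings are pinned down (they are forced by the explicit formulas, but need to be written out carefully), the commutativity of the square is an identification of functors rather than a computation, and the naturality in $\ms{V}$ is automatic from the naturality of each constituent isomorphism.
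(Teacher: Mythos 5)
Your proposal follows essentially the same route as the paper: reduce to the level of complexes via the reduction principle, identify $\mu_*\pi^\circ\ms{V}^\bul$ section-wise with $R(K)\otimes\ms{V}^\bul$ so that $\rm{\bf R}\G\circ\G^{\rm{geo}}_{K,S_x}$ becomes $\rm{Hom}^\bul(\cl{N}(\fr{k}),R(K)\otimes\rm{\bf R}\G(\ms{V}^\bul))^{(\fr{k},S_x)}$ via the $\mu_+=\mu_*(\text{ - })\otimes_{\cl{U}(\fr{k})}\cl{N}(\fr{k})$ description and the $\rm{Ind_h}\simeq\rm{Ind'_h}$ rewriting. The action-and-homotopy bookkeeping you defer (tracking the shear $\sigma$ relating $\mu$ and $\pi$, the $S_x$-, $\fr{k}$-, $\cl{D}_{[\lambda]}$- and $K$-actions, and the homotopy $i_\xi$) is exactly what the paper's proof writes out, so your plan is correct and matches it.
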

\proof
	The reduction principle allows us to work with categories of complexes alone.  
	It is clear from the previous constructions that for any 
	$\ms{V}^\bul\in \cl{C}^\rm{b}(\ms{D}_\lambda, S_x)$, we have
	\begin{equation*}
		\rm{\bf R}\G(X,\G^\rm{geo}_{K,S_x}\ms{V}^\bul) =
		 \rm{Hom}_{(\fr{k}, S_x,\cl{N}(\fr{s}))}^\bul(\cl{N}(\fr{k}), 
		 \rm{\bf R}\G(X,\mu_*\pi^\circ\ms{V}^\bul))
	\end{equation*}
	as $(\cl{D}_{[\lambda]},K)$-complexes.  For any open set $U\subset X$, we have by 
	definition
	$\mu_*\pi^\circ\ms{V}^\bul(U) = R(K)\otimes \ms{V}^\bul(U)$ where $R(K)$ is the ring
	of regular functions on $K$.  Therefore, we have
	\begin{equation*}
		\rm{\bf R}\G(X,\mu_*\pi^\circ\ms{V}^\bul) = R(K)\otimes\rm{\bf R}\G(X,\ms{V}^\bul). 
	\end{equation*} 
	Local sections of $\mu_*\pi^*\ms{V}^\bul$ are
	functions $F$ from $K$ to $\ms{V}^\bul$.  
	
	The shift isomorphism $\sigma: K\times X\to K\times X$ (defined by $\sigma(k,x)=(k,kx)$) 
	relates $\mu$ and $\pi$ by $\mu = \pi\circ \sigma$. 
	Let $\phi: \mu^*\ms{D}_\lambda\tilde{\to}\pi^*\ms{D}_\lambda$ be the $K$-equivariance
	isomorphism, and let $\phi^*: \ms{D}_\lambda^\mu\tilde{\to}\ms{D}_\lambda^\pi$
	be the induced isomorphism of tdo's.   The isomorphism $\phi^*$ locally sends a 
	section $T\in\ms{D}^\mu_\lambda$ to $\phi\circ T\circ \phi^{-1}$,
	where $\phi: \mu^*\ms{D}_\lambda\tilde{\to} \pi^*\ms{D}_\lambda$ sends 
	$X\in \mu^*\ms{D}_\lambda$ to $\phi(X) = X\circ \sigma^{-1}$.  
	Now consider the $S_x$-action.  Let 
	\begin{equation*}
		\tilde{\mu},\tilde{\pi}: S_x\times K\times X\to K\times X
	\end{equation*}
	be the action and projection morphisms, respectively.  For any
	$t\in S_x$, let 
	\begin{equation*}
		\tilde{e}_t: K\times X\to S_x\times K\times X
	\end{equation*} 
	be the inclusion
	$(k,x)\mapsto (t,k,x)$.  If $\psi$ is the $S_x$-equivariance morphism for 
	$\ms{V}^\bul$, then the $S_x$-action on $\pi^*\ms{V}^\bul$ is given by 
	\begin{equation*}
		\lambda_s(t)(f\otimes v) = \pi^*e_t^*(\psi)^{-1}(f\otimes v),  
	\end{equation*}
	where $e_t: X\to S_x\times X$, $e_t: x\mapsto (t,x)$.  Hence for a local
	section $F\in \mu_*\pi^*\ms{V}^\bul$, we have 
	\begin{equation*}
		(\lambda_s(t)F)(k) = e_t^*(\psi)^{-1}(F(kt)).
	\end{equation*}

	The $\cl{U}(\fr{k})$-module structure on $\mu_*\pi^*\ms{V}^\bul$
	is induced by  $\phi^*$.   For every $F\in \mu_*\pi^*\ms{V} $ and $\xi\in\fr{k}$, we have
	\begin{equation*}
		(\lambda_\fr{k}(\xi)F)(k) = \phi^{-1}(\xi\phi(F))(k)
		  = L_\xi F(k) + \pi_V(\xi)F(k).  
	\end{equation*}	
%	*******Don't erase this part ever ********	
%	These are the right actions because $F\in\mu_*\pi^*\ms{V}(U)$ is a function 
%	\begin{equation*} F: K\to \prod_{k\in K} \ms{V}(k^{-1}U)\end{equation*} and so is $e_t^*(\psi)^{-1}F(kt)$.
%	Additionally, if $\phi(F): K\to \ms{D}_\lambda(U)$ is defined by $\phi(F)(k) = \nu(k).F(k)$,
%	so $\phi^*(\xi)^{-1}F = d\nu(\xi)F+L_\xiF: K\to \prod\ms{D}_\lambda(k^{-1}U)$,
%	and this determines $\phi^*(\xi)F$ for all $\ms{V}$.  Finally, since we act on $K$ by 
% 	$S_x$ by the right regular action, the inclusion $\fr{s}\to \fr{k}$ is twisted by a minus
%	sign, so $d\lambda_S - \lambda_\fr{k}|_\fr{s} = \omega_V$.  
	On the other hand, $ \ms{D}_\lambda^\pi =\ms{D}_K\boxtimes\ms{D}_\lambda$,
	so for all $T\in \ms{D}_\lambda$ and $F\in \mu_*\pi^*\ms{V}^\bul$, we have
	\begin{equation*}
		(\pi_\G(T)F)(k) = (\phi^{-1}\circ T\circ \phi)(F)(k) =\pi_V(\nu_D(k^{-1})T\nu_D(k)) F(k),
	\end{equation*}
	where $\nu_D$ denotes the $K$-action on $\ms{D}_\lambda$, and $\pi_V$ the 
	$\ms{D}_\lambda$-action on $\ms{V}^\bul$.  
	Since 
$		\sigma_k^*\mu_*\pi^*\ms{V}^\bul=\mu_*\pi^*\ms{V}^\bul$
	for all $k\in K$, the 
	$K$-action on $F\in \mu_*\pi^*\ms{V}^\bul$ is defined by
	$(\nu_\G(k)F)(k') = F(k^{-1}k')$.
	The $(\cl{D}_\lambda, K)$-actions on $\rm{Hom}_{(\fr{k},S_x,\cl{N}(\fr{s}_x))}^\bul(\cl{N}
	(\fr{k}),\rm{\bf R}\G(X,\mu_*\pi^*\ms{V}^\bul))$ are given as in the definition
	of the algebraic functor $\rm{Ind_h}$.  Likewise, the homotopy of actions $i$ is defined for a morphism 
	$f: \cl{N}(\fr{k})\to R(K)\otimes \rm{\bf R}\G(X,\ms{V}^\bul)$ of degree $\ell$ by 
	\begin{equation*}
		(i_\xi f)(n)(k) = (-1)^{\ell}f(i_{\rm{Ad}(k)\xi}n)(k)
	\end{equation*}
	for all $n\in\cl{N}(\fr{k})$, $k\in K$, where we take the $(\cl{U}(\fr{k}),K)$-module
	structure on $\cl{N}(\fr{k})$ from \S \ref{ForAK-cxs}.  Then, by Proposition \ref{equicommprop}.\ we have an isomorphism
	\begin{equation*}
		\rm{\bf R}\G(X,\G_{K,S_x}^\rm{geo}(\ms{V}^\bul)) \simeq 
		\G_{K,S_x}^\rm{equi}(\rm{\bf R}\G(X,\ms{V}^\bul))
	\end{equation*}
	of  $(\cl{D}_{[\lambda]}, K)$-complexes.	
\endproof

\begin{prop}
    If $\tau$ is a connection on $Q$ compatible with $\ms{D}^i$, then
	\begin{equation*}
		\G^\rm{geo}_{K,S_x}i_{x+}T_x\tau[d_Q] \simeq \tau.
	\end{equation*}
\end{prop}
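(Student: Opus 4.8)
The plan is to unwind the definition $\G^\rm{geo}_{K,S_x} = \mu_+^{S_x}\pi^\circ[-d_K]$ and reduce everything to a computation on the one-point orbit $x$, where the $K$-action on $K\times\{x\}\simeq K$ is free and the equivalence of Lemma \ref{equivariance}.\ applies. The starting observation is that $i_{x+}T_x\tau$ lives in $\rm{D^b}(\ms{D}_\lambda^{i_x}, S_x)$, where $\ms{D}_\lambda^{i_x}$ is a tdo on a point, so $T_x\tau$ is just the geometric fiber regarded as an $S_x$-representation. Since $\G^\rm{geo}$ commutes with proper direct image (this is exactly the content of the commuting-square proposition proved just above, applied to $i: Q\to X$ and pulled back further to the point), it suffices to prove the statement on $Q$ itself: namely that $\G^\rm{geo}_{K,S_x}(i_{x+}T_x\tau)[d_Q]\simeq \tau$ as an equivariant $\ms{D}^i_\lambda$-module on $Q$, and then apply $i_+$ to both sides.

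Now $Q\simeq K/S_x$, so I would identify $\tau$ on $Q$ with its fiber $T_x\tau$ as an $S_x$-module, under the equivalence $\cl{M}(\ms{D}^i_\lambda, K)\simeq \cl{R}ep(S_x)$ coming from the theorem on homogeneous spaces (a connection compatible with $\ms{D}^i$ is precisely such an equivariant sheaf). Under this identification the functor $\G^\rm{geo}_{K,S_x}$ restricted to sheaves on the point and composed with $i_{x+}$ should become, up to the shift, the functor of taking an $S_x$-module $W$ to the $K$-equivariant sheaf on $Q$ induced from $W$ — i.e. the quasi-inverse of the fiber functor $T_x$. Concretely: $\pi^\circ$ applied to the skyscraper at $x$ (pushed from the point into $Q$) produces a sheaf on $K\times Q$ supported on $K\times\{x\}\simeq K$; $\mu$ restricted to this support is an isomorphism onto $Q$ once we quotient by nothing, but the $S_x$-invariants in $\mu_+^{S_x}$ exactly cut $K\times\{x\}$ down to $Q=K/S_x$ and twist the fiber by $W=T_x\tau$. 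The shifts $[-d_K]$ from the definition and $[d_Q]$ from the statement combine with the relative-dimension shift hidden in $i_{x+}$ (the inclusion of a point into $Q$ contributes a shift related to $\dim Q$ in the normalization of $i_+$, via $\omega_{Y/X}$ and $d_{Y/X}$) to cancel, leaving the unshifted $\tau$.

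The cleanest way to organize the bookkeeping is to factor $i_x: x\to Q$ through $K\times\{x\}\hookrightarrow K\times X$ and use base change for $\ms{D}$-module direct image in the square relating $\mu$ on $K\times Q$ to $\mu$ on $K\times\{x\}$, exactly as in the commuting-square proposition. Then the only genuine content is: for the free $K$-action on $K$ with quotient the point, the equivalence $q^\circ \dashv q_+^K[-d_K]$ of Lemma \ref{equivariance}.\ sends the trivial $(\ms{D}^q,K)$-sheaf with fiber $W$ to $W$ on the point and back, and tracking the $S_x$-equivariance through this equivalence gives $\tau$ on $Q=K/S_x$. I expect the main obstacle to be purely a matter of shift-chasing: verifying that the three shifts — the $[-d_K]$ built into $\G^\rm{geo}$, the $[d_Q]$ in the statement, and the canonical-bundle/relative-dimension shift coming from the normalization of $i_{x+}$ along the inclusion of a point into $Q$ (recall $i_+$ is defined with a twist by $\omega_{Y/X}$ and the convention $f^! = \rm{\bf L}f^\circ[d_{Y/X}]$) — all conspire to cancel. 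Once the shifts are pinned down, the underlying module identification is forced by the equivariant Beilinson–Bernstein equivalence $\cl{M}(\ms{D}^i_\lambda,K)\simeq\cl{R}ep(S_x)$ and the fact that $T_x$ is the fiber functor realizing it, so no further computation is needed.
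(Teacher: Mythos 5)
Your route is essentially the paper's own: factor through $K\times\{x\}\hookrightarrow K\times Q$, use base change to rewrite $\G^{\rm{geo}}_{K,S_x}i_{x+}[d_Q]$ as $q_+^{S_x}p^\circ[-d_{S_x}]$ for the two projections $x\leftarrow K\rightarrow Q$, and then invoke the free-quotient equivalence of Lemma \ref{equivariance} together with the identification $T_x\tau = p_*^Kq^*\tau$ of the fiber functor as the quasi-inverse of induction, which is exactly how the paper concludes $\tau\simeq q_+^{S_x}\pi^\circ T_x\tau$. Two small bookkeeping corrections: the statement already lives on $Q$ (so no final application of $i_+$ is needed), and $i_{x+}$ is an unshifted exact direct image for the closed immersion of a point, so the shift cancellation is not "hidden in $i_{x+}$" but is simply $d_Q-d_K=-d_{S_x}$, which is precisely the normalization $[-d_{S_x}]$ built into the equivalence $q_+^{S_x}p^\circ[-d_{S_x}]$ for the $S_x$-quotient $q:K\to Q$.
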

\proof
	Denote the quotient and projection maps $q: K\to Q$ and $p: K\to x$ respectively
	and observe $q=\mu\circ i_x$.  Let $i_x: K \to K\times Q$
	be the lift of $i_x: x\to Q$ under the projection $\pi: K\times Q\to Q$.  
	Base change  for $\ms{D}$-modules then gives the equivalence
	\begin{equation*}
		\G^\rm{geo}_{K, S_x}i_{x+}[d_Q] = q_+^{S_x}\pi^\circ [-d_{S_x}].
	\end{equation*}
	Lemma \ref{equivariance}.\ provides an equivalence of categories
	\begin{equation*}	
		\xymatrix@C=4pc{
		\cl{C}^\rm{b}(\ms{D}^{i\circ q},  S_x) \ar@<.5ex>[r]^{q_+^{S_x}p^\circ[-d_{S_x}]} & 
			\cl{C}^\rm{b}(\ms{D}^i, K) \ar@<.5ex>[l]^{p_+^Kq^\circ[-d_K]}.
			}  
	\end{equation*}
	The corresponding equivalence for 
	the underlying $\ms{O}$-module structure is
	\begin{equation}\label{omodeq}
		\xymatrix@C=3pc{
			\cl{R}ep_{S_x} \ar@<.5ex>[r]^{q_*^{S_x}p^*} & 
			\cl{M}(\ms{O}_Q, K), \ar@<.5ex>[l]^{p_*^Kq^*}
			}  
	\end{equation}
	which exists since $\tau$ is a vector bundle.
	In this setting, we have $i_x^\circ \rm{Res}_{S_x}^K \tau = p_*^Kq^*\tau = 
	T_x\tau $.  
	The equivalence (\ref{omodeq}) thus implies there is a natural isomorphism
	$\tau \simeq q_+^{S_x}\pi^\circ T_x\tau$. 
	 
\endproof

\begin{prop}  With the above notation, we have
$ p^\circ \G^\rm{geo}_{K, S_x} = \G^\rm{geo}_{K, S_x}p^\circ$.
\end{prop}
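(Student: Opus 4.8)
The plan is to reduce this to base change for $\ms{D}$-modules together with the contravariant functoriality of the inverse image, in the same spirit as the earlier proof that $\G^\rm{geo}_{K,S_x}$ commutes with $i_+$. Recall that by definition $\G^\rm{geo}_{K,S_x}=\mu_+^{S_x}\pi^\circ[-d_K]$, assembled from the projection and action morphisms $\pi,\mu\colon K\times(\text{ - })\to(\text{ - })$, and that the very same formula computes $\G^\rm{geo}_{K,S_x}$ on both flag varieties linked by $p$. Writing $p$ also for the induced morphism $1_K\times p\colon K\times X\to K\times X_\theta$, the first step is to record that $p$ and $1_K\times p$ fit into two squares, one with the projections $\pi$ and one with the action maps $\mu$; the $\pi$-square commutes tautologically and the $\mu$-square commutes because $p$ is $K$-equivariant. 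Both squares are in fact Cartesian — the $\pi$-square obviously, and for the $\mu$-square one checks directly that $(k,x)\mapsto(k,k^{-1}p(x),x)$ identifies $K\times X$ with the relevant fibre product — and both consist of $S_x$-equivariant morphisms, with $S_x\subseteq K$ acting through the $K$-action.

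Next I would peel off the two constituents of $\G^\rm{geo}_{K,S_x}$. Applying the functoriality $(g\circ f)^\circ\simeq f^\circ\circ g^\circ$ of $\ms{D}$-module inverse image to the $\pi$-square gives $\pi^\circ\circ p^\circ\simeq(1_K\times p)^\circ\circ\pi^\circ$; here one also uses that the canonical identification $\ms{D}_\lambda^\pi\simeq\ms{D}_\lambda^\mu$ built into the definition of $\G^\rm{geo}_{K,S_x}$ is compatible with $p$, which holds because that identification is induced by the $K$-equivariant structure of $\ms{D}_\lambda$ and $p^\circ$ of a homogeneous tdo stays homogeneous. Applying base change for $\ms{D}$-modules to the Cartesian $\mu$-square — no derived correction is needed since $p$, hence $1_K\times p$, is smooth — gives $p^\circ\circ\mu_+\simeq\mu_+\circ(1_K\times p)^\circ$. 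Since the shift $[-d_K]$ is the same on $X$ and on $X_\theta$, splicing these two identities yields
\begin{equation*}
p^\circ\,\G^\rm{geo}_{K,S_x}=p^\circ\mu_+^{S_x}\pi^\circ[-d_K]=\mu_+^{S_x}(1_K\times p)^\circ\pi^\circ[-d_K]=\mu_+^{S_x}\pi^\circ p^\circ[-d_K]=\G^\rm{geo}_{K,S_x}\,p^\circ,
\end{equation*}
which is the asserted equality.

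The step I expect to need the most care is the bookkeeping of the $S_x$-equivariant structure: one must verify that the two displayed identities are not merely identities of underlying $\ms{D}$-module functors but hold already in the $S_x$-equivariant derived categories, so that in particular $p^\circ$ commutes with the formation of the $S_x$-invariants concealed in $\mu_+^{S_x}$. This is true because every morphism in sight is $S_x$-equivariant (as $S_x\subseteq K$ and $p$ is $K$-equivariant) and both squares are squares of $S_x$-equivariant morphisms, so by the reduction principle of \S 3 one may work with plain complexes and let the $S_x$-action ride passively through an $S_x$-equivariant version of base change; nonetheless, setting up that equivariant base-change statement precisely — rather than waving at it — is where the real work lies.
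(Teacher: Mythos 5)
Your argument is correct and is essentially the paper's own proof: the paper likewise observes that $p^\circ$ commutes with $\pi^\circ$ tautologically and with $\mu_+^{S_x}$ by base change, using the $K$-equivariance (hence $S_x$-equivariance) of $p$. Your version simply spells out the Cartesian squares and the equivariant bookkeeping that the paper leaves implicit.
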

\proof
	The inverse image $p^\circ$ obviously commutes with $\pi^\circ$, and
	it commutes with $\mu_+^{S_x}$ by base change since $p$ is $S_x$-equivariant. 
\endproof

%%%%%%%%%%%%%%%%%%%%%%%%%%%%%%%%%%%%%%%%%%
%%%%%%%%%%%%%%%%%%%%%%%%%%%%%%%%%%%%%%%%%%

%%%%%%%
\section{Cohomology of Derived Standard Modules}
 %%%%%%%
 
 %%%%%%%%
\subsection{The Embedding Theorem}\label{embthm}
%%%%%%%%

For a Harish-Chandra pair $(\fr{g}, K)$, 
let $X_\theta$ be a partial flag variety of type $\theta$.  Fix a tdo 
$\ms{D}_\lambda$ on $X_\theta$.  Let $X$ denote the full flag variety of $\fr{g}$.
There is a natural fibration \begin{equation*}p:X\to X_\theta\end{equation*} and a corresponding natural morphism  from 
$\cl{U}_{[\lambda]}^p:=\G(X,\ms{D}_\lambda^p)$
to $\cl{D}_{[\lambda]} := \G(X_\theta, \ms{D}_\lambda)$. We obtain a pull-back functor
\begin{equation*}
     p^*: \cl{M}(\cl{D}_{[\lambda]}) \to \cl{M}(\cl{U}_{[\lambda]}^p)
\end{equation*}
of modules over these rings in the usual way.
The pull-back $p^*$ is related to $p^\circ$ by the following theorem.  

\begin{thm}[Embedding Theorem \ref{embthmstatement}] 
	 The inverse image functor
	\begin{equation*}
		p^\circ: \cl{M}(\ms{D}_\lambda)\to \cl{M}(\ms{D}_{\lambda}^p)
	\end{equation*}
	is fully faithful for all $\lambda$, and for $\lambda$ anti-dominant, we have
	$\G\circ p^\circ = p^*\circ\G$.
\end{thm}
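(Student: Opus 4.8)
The plan is to establish the two assertions separately, since full faithfulness of $p^\circ$ is a purely local/geometric statement while the identity $\G\circ p^\circ = p^*\circ\G$ involves the $\ms{D}$-affineness results recalled earlier. For full faithfulness, the key observation is that $p: X\to X_\theta$ is a smooth surjective submersion with connected (in fact, projective) fibers, so $p^\circ$ is exact and we have the adjoint pair $p^\circ \dashv p_+[-d_{X/X_\theta}]$ from the discussion of surjective submersions in \S 2.3. The unit of this adjunction is $\rm{id}\to p_+[-d_{X/X_\theta}]\circ p^\circ$, and the first step is to prove this unit is an isomorphism of functors on $\cl{M}(\ms{D}_\lambda)$. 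Concretely, using the relative de Rham resolution one computes $p_+ p^\circ \ms{F}[-d_{X/X_\theta}] = \rm{\bf R}p_*(\Omega^\bul_{X/X_\theta}(\ms{D}^p_\lambda)\otimes_{\ms{D}^p_\lambda}p^\circ\ms{F})$, which reduces to $\rm{\bf R}p_*(\Omega^\bul_{X/X_\theta}\otimes_{\ms{O}_X}p^*\ms{F})$; by the projection formula this is $\ms{F}\otimes_{\ms{O}_{X_\theta}}\rm{\bf R}p_*\Omega^\bul_{X/X_\theta}$, and the relative de Rham cohomology of the fiber bundle $p$ (whose fibers are partial flag varieties of the Levi, which have trivial relative de Rham cohomology concentrated in degree $0$) is just $\ms{O}_{X_\theta}$. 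Hence $p_+p^\circ\ms{F}[-d_{X/X_\theta}]\simeq \ms{F}$ naturally, so the unit is an isomorphism; a standard formal argument then shows that a functor whose unit of adjunction is an isomorphism is fully faithful.

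For the second assertion, assume $\lambda$ is anti-dominant. The natural ring map $\cl{U}^p_{[\lambda]}=\G(X,\ms{D}^p_\lambda)\to \cl{D}_{[\lambda]}=\G(X_\theta,\ms{D}_\lambda)$ arises from the adjunction morphism $\ms{D}^p_\lambda\to p^\circ$ applied to $\ms{D}_\lambda$, or equivalently from $p^{-1}\ms{D}_\lambda\to \ms{D}^p_\lambda$ on global sections. Given $\ms{F}\in\cl{M}(\ms{D}_\lambda)$, there is a canonical map $\G(X_\theta,\ms{F})\to \G(X,p^\circ\ms{F})$ obtained by first pulling sections back via $p^{-1}$ and then composing with $p^{-1}\ms{F}\to p^\circ\ms{F}$; this map is $\G(X_\theta,\ms{D}_\lambda)$-linear through the ring map above, hence factors through $p^*\G(X_\theta,\ms{F}) = \cl{U}^p_{[\lambda]}\otimes_{\cl{D}_{[\lambda]}}\G(X_\theta,\ms{F})\to \G(X,p^\circ\ms{F})$. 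The task is to show this last map is an isomorphism. The plan is to use the projection/de Rham computation above in the reverse direction: $\rm{\bf R}\G(X,p^\circ\ms{F}) = \rm{\bf R}\G(X_\theta,\rm{\bf R}p_*p^\circ\ms{F})$, and since $p^\circ\ms{F}$ restricted to each fiber is $\ms{O}$-coherent with $\rm{\bf R}p_*p^\circ\ms{F}=\ms{F}\otimes_{\ms{O}}\rm{\bf R}p_*\ms{O}_X$, fiberwise acyclicity of $\ms{O}$ on partial flag varieties gives $\rm{\bf R}p_*p^\circ\ms{F}\simeq\ms{F}$ in the derived category of $\ms{O}_{X_\theta}$-modules (though one must track the $\ms{D}$-module structure carefully — this is where $\ms{D}^p_\lambda=\ms{D}_{X,\lambda}$ and $\ms{D}^p_\lambda=\ms{D}_{\lambda-\rho_\theta}$ from \S 2.5 enter). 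Therefore $\rm{\bf R}\G(X,p^\circ\ms{F})\simeq\rm{\bf R}\G(X_\theta,\ms{F})$; anti-dominance of $\lambda$ (hence $\ms{D}_\lambda$-affineness of $X_\theta$ when $\lambda$ is also regular, and more generally exactness of $\G$ in the dominant case) kills higher cohomology, so $\G(X,p^\circ\ms{F})\simeq\G(X_\theta,\ms{F})$ as abelian groups, and one checks the $\cl{U}^p_{[\lambda]}$-module structure matches $p^*\G(X_\theta,\ms{F})$.

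The main obstacle I anticipate is not the cohomological vanishing — that follows from the cited Beilinson–Bernstein theory — but rather bookkeeping the twisted $\ms{D}$-module structures through the projection formula and the Leray spectral sequence for $p$. In particular, the isomorphism $\rm{\bf R}p_*p^\circ\ms{F}\simeq\ms{F}$ must be upgraded from $\ms{O}_{X_\theta}$-modules to $\ms{D}_\lambda$-modules, and this requires knowing that the relative de Rham differential on $\Omega^\bul_{X/X_\theta}(\ms{D}^p_\lambda)$ is compatible with the tdo identifications of \S 2.5; here the hypothesis that $\ms{D}_\lambda$ is \emph{homogeneous}, so that it fits into the $\cl{U}^\circ$-quotient picture with the relation $\ms{D}^p_{X,\lambda}=\ms{D}_{X,\lambda}$, is precisely what makes the computation go through. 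One also needs $d_{X/X_\theta}$ to interact correctly with the shift conventions, but since $\ms{O}_X$ has relative de Rham cohomology in degree $0$ (not the top degree), no net shift appears and $p^\circ$ genuinely lands in degree $0$, consistent with its exactness.
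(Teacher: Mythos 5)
Your overall route to full faithfulness --- exactness of $p^\circ$, the adjunction $p^\circ\dashv p_+[-d_{X/X_\theta}]$, and invertibility of the adjunction unit --- is the same as the paper's, but the computation you offer for the unit is wrong. The fibers of $p$ are flag varieties of a Levi factor, and their algebraic relative de Rham cohomology is their singular cohomology, which is \emph{not} concentrated in degree $0$: already for a $\PP^1$-fibration $\rm{\bf R}p_*\Omega^\bul_{X/X_\theta}$ has a nonvanishing second cohomology sheaf, and for $X_\theta$ a point $p_+p^\circ\CC[-n]$ computes the full de Rham cohomology of the flag variety. Hence $\rm{\bf R}p_*\Omega^\bul_{X/X_\theta}\not\simeq\ms{O}_{X_\theta}$ and the derived unit $\rm{id}\to p_+p^\circ[-n]$ is not an isomorphism; you have conflated the coherent statement $\rm{\bf R}p_*\ms{O}_X\simeq\ms{O}_{X_\theta}$ (true, and correctly used in your second paragraph) with triviality of relative de Rham cohomology (false). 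What full faithfulness of $p^\circ$ on $\cl{M}(\ms{D}_\lambda)$ actually needs is only the degree-zero statement: $p_+p^\circ\ms{W}[-n]$ has cohomology in non-negative degrees and the unit induces an isomorphism $\ms{W}\simeq \rm{H}^0(p_+p^\circ\ms{W}[-n])$, whence
\begin{equation*}
\rm{Hom}_{\ms{D}_\lambda^p}(p^\circ\ms{V},p^\circ\ms{W})\simeq
\rm{Hom}(\ms{V},p_+p^\circ\ms{W}[-n])\simeq
\rm{Hom}_{\ms{D}_\lambda}(\ms{V},\ms{W}).
\end{equation*}

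This degree-zero isomorphism is exactly what the paper proves, and its mechanism is the twist your reduction discards: writing $p_+$ via the resolution $\omega_{X/X_\theta}\otimes\cl{T}^\bul_{X/X_\theta}$ and applying the projection formula, the terms restrict on a fiber $F$ to $\omega_F\otimes\wedge^k\ms{T}_F$; since $\omega_{X/X_\theta}=\ms{O}(2\rho_\theta)$, the top term is $\ms{O}_X$, while for $k\neq n$ Borel--Weil--Bott gives $\G(F,\omega_F\otimes\wedge^k\ms{T}_F)=0$, so $p_*$ of those terms vanishes, $\rm{H}^0(\rm{ad})$ is surjective, and injectivity follows from left exactness of $p_+[-n]$. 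To repair your argument you must either carry out this $\rm{H}^0$-level analysis (your "unit is an isomorphism, hence fully faithful" formalism cannot be invoked for the full derived unit) or follow the paper's twisted computation; the untwisted de Rham reduction as you state it fails. Your treatment of the second assertion --- $\rm{\bf R}p_*p^\circ\ms{F}\simeq\ms{F}$ as $\ms{O}$-modules from fiberwise acyclicity of $\ms{O}_F$, hence $\rm{\bf R}\G(X,p^\circ\ms{F})\simeq\rm{\bf R}\G(X_\theta,\ms{F})$, with the module structures matched by local triviality of $p$ --- is essentially the paper's argument and is fine.
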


\begin{proof} 
	We will prove full faithfulness of $p^\circ$ by constructing a functor
	from $\cl{M}(\ms{D}_\lambda^p)\to \cl{M}(\ms{D}_\lambda)$ which is
	 quasi-inverse to $p^\circ$ when restricted to the essential image of $p^\circ$.
    Since $p$ is smooth,  the shifted direct image functor $p_+[-n]$ is right adjoint
    to $p^\circ$ on the derived categories,
    where $n$ is the dimension of the fibers of $p$.  That is, for any $\ms{V}\in 
    \cl{M}(\ms{D}_\lambda)$ there is a natural morphism of complexes
     \begin{equation*}
     	\rm{ad}: \ms{V}\to p_+p^\circ \ms{V}[-n].
    \end{equation*}
     As $p$ is a flat morphism between smooth projective varieties, the inverse image
     $p^\circ$ is exact on $\cl{M}(\ms{D}_\lambda)$.  
     Recall that for surjective submersions the direct image functor $p_+$ is given by
    \begin{equation*}
       \ms{V}\mapsto \rm{\bf R}p_*(\ms{V}\otimes_{(\ms{D}_\lambda^p)^\circ}\cl{T}_{X/X_\theta}^\bul(\ms{D}_\lambda^p)^\circ) = \rm{\bf R}p_*(\ms{V}\otimes_{\ms{O}_X}\omega_{X/X_\theta}
       \otimes_{(\ms{D}_\lambda^\circ)^p}\cl{T}_{X/X_\theta}^\bul(\ms{D}_\lambda^\circ)^p).
    \end{equation*}
    	For all the reductions, we will use the first presentation of $p_+$, although the 
	left $\ms{D}_\lambda$-module structure is obscured by this notation.    
	
	The relative tangent complex vanishes below the fiber dimension, which implies that
	for $\ms{V}\in\cl{M}(\ms{D}_\lambda)$ there is a standard truncation triangle
     \begin{equation*}
       \xymatrix@C=-4pc@R=2pc{ & \tau_{\geq 1}(p^\circ\ms{V}\otimes_{(\ms{D}_\lambda^p)^\circ}
       \cl{T}^\bul_{X/X_\theta}(\ms{D}_\lambda^p)^\circ)[-n] )
       \ar[dl]^{[1]} \\  \rm{Ker} \; d^{-n} \ar[rr] & &
       p^\circ\ms{V}\otimes_{(\ms{D}_\lambda^p)^\circ}
       \cl{T}^\bul_{X/X_\theta}(\ms{D}_\lambda^p)^\circ[-n]  \ar[ul]}       
    \end{equation*} 
 in the derived category, where $d^{\bul}$ is the differential in 
 $p^\circ\ms{V}\otimes_{(\ms{D}_\lambda^p)^\circ}\cl{T}^{\bul}_{X/X_\theta}$.
    Since $p_*$ is left exact, the long exact sequence obtained from applying 
    $\rm{\bf R}p_*$ to this triangle induces an isomorphism 
    \begin{equation*}
       p_* \rm{Ker}\; d^{-n} \simeq \rm{H}^{-n}(\rm{\bf R}p_*(p^\circ\ms{V}\otimes_{(\ms{D}_\lambda^p)^\circ}
       \cl{T}^\bul_{X/X_\theta}(\ms{D}_\lambda^p)^\circ))
    \end{equation*} 
     in degree $0$.  
   The adjointness morphism thus descends to cohomology
        \begin{equation*}
		\rm{H}^0(\rm{ad}): \ms{V}\to \rm{\bf R}^0p_*\rm{Ker}\;d^{-n}.
        \end{equation*}
        In fact, since $p_+[-n]$ is left exact, the morphism $\rm{H}^0(\rm{ad})$ is injective.  

	The remainder of the proof proves surjectivity of $\rm{H}^0(\rm{ad})$. 
	The projection formula for $\ms{O}$-modules produces the isomorphism
	\begin{equation*}
		\rm{\bf R}p_*(p^*\ms{V}\otimes\omega_{X/X_\theta}\otimes
		\cl{T}^\bul_{X/X_\theta}) \simeq \ms{V}\otimes \rm{\bf R}p_*
		(\omega_{X/X_\theta}\otimes\cl{T}^\bul_{X/X_\theta});
	\end{equation*}
	 it is also an isomorphism of left $\ms{D}_\lambda$-modules.  To see this, 
	 we examine the tensor product  $\omega_{X/X_\theta}\otimes 
	 \cl{T}^\bul_{X/X_\theta}$.  Define $F_x=p^{-1}(p(x))$ and let $\fr{b}_x$ be 
	 the Borel corresponding to $x$ and similarly $\fr{p}_x$ the parabolic 
	 corresponding to $p(x)$.  There is a short exact sequence
   	\begin{equation*}   
   		0\to \fr{p}_x/\fr{b}_x \to \fr{g}/\fr{b}_x\to \fr{g}/\fr{p}_x\to 0
   	\end{equation*}
   	of the tangent spaces.  From this sequence we obtain the isomorphisms
   	\begin{equation*}
   		T_x\ms{T}_{X/X_\theta} = T_x\ms{T}_{F_x} \simeq \bar{\fr{n}}_{\fr{l},x}\;\;
		\text{and}\;\; T_x\Omega_{X/X_\theta} = T_x\Omega_{F_x}\simeq \fr{n}_{\fr{l},x},
   	\end{equation*}
   	where $\fr{n}_{\fr{l},x}$ is the nilpotent subalgebra of a Levi factor $\fr{l}_x$ of 
	$\fr{p}_x$ consisting of positive root spaces of type $\theta$ and 
	$\bar{\fr{n}}_{\fr{l},x}$ is the opposite nilpotent subalgebra in $\fr{l}_x$.  In 
	particular, the relative canonical sheaf of $p$ is the homogeneous line bundle
   	\begin{equation*}
   		\omega_{X/X_\theta} = \ms{O}(2\rho_\theta).
	\end{equation*}  
  	Furthermore, since $\cl{T}_{X/X_\theta}^{-n} = \wedge^n\ms{T}_{X/X_\theta} =
	\ms{O}(-2\rho_\theta)$, the tensor product $\omega_{X/X_\theta}\otimes 
	\cl{T}_{X/X_\theta}^{-n} \simeq \ms{O}_X$ is $p_*$-acyclic.  
     	We claim additionally that for $k\neq n$ we have
     	\begin{equation}\label{dirimis0}
     		\rm{\bf R}p_*(\omega_{X/X_\theta}\otimes \cl{T}^k_{X/X_\theta})\simeq 0.
   	\end{equation}
     	Let $U\subset X_\theta$ be any open subset  trivializing $p$.  
     	Then $\ms{T}_{X/X_\theta}^{-k}|_{U\times F}$ is isomorphic to 
	$p_F^*\wedge^k\ms{T}_F$, where $p_F: U\times F\to F$ is the projection to 
	$F\simeq F_x$.  Similarly, $\omega_{X/X_\theta} = p_F^*\omega_F$. Then, we 
	have
	\begin{equation*}
     		p_F^*(\omega_F\otimes \wedge^k\ms{T}_F)(U\otimes F) =
	 	\ms{O}_X(U)\otimes\G(F,\omega_F\otimes \wedge^k\ms{T}_F).
     	\end{equation*}
     	To show (\ref{dirimis0}) holds, it is enough to show 
     	$\G(F,\omega_F\otimes \wedge^k\ms{T}_F)$ for all $k\neq n$.

  	Fix a Levi decomposition $B=HU$, let $\fr{h}$ be the Lie algebra of $H$, and
   	$\fr{n}=[\fr{b},\fr{b}]$ the   maximal nilpotent subalgebra of $\fr{b}$.  
       	Note that $\fr{n}$ is the Lie algebra of the unipotent subgroup $U$.
       	Let $\fr{g}_\alpha$ be the root space in $\fr{g}$ for root $\alpha$, and $\Delta^+$ 
	the subset of positive roots.  Let $\rho$, as usual, be the half-sum of positive roots.  

   	The sheaf $\omega_X\otimes\wedge^k\ms{T}_X$ is the sheaf of sections of the 
   	vector bundle   $G\times^B(\CC_{2\rho} \otimes \wedge^k\bar{\fr{n}})$, which 
	has a filtration 
   	\begin{equation*}
   		F_p	=G\times^B(\CC_{2\rho}\otimes\wedge^k\bar{U}^p\bar{\fr{n}}),
   	\end{equation*}
   	whose quotients   $F_p/F_{p+1}$  have as their sheaf of sections
   	\begin{equation*}
   		\ms{O}(2\rho)\otimes \bigoplus_{\alpha\in\wedge^k\Delta^+_p}\ms{O}(-\alpha),
   	\end{equation*}
   	where $\wedge^k\Delta^+_p$ is the set of weights appearing in 
   	\begin{equation*}
   		\wedge^k(\bar{U}^p\bar{\fr{n}}/\bar{U}^{p+1}\bar{\fr{n}}).
   	\end{equation*}
   	That is, it is the set of $k$-fold sums of distinct length $p$ positive roots.
   	Therefore, for $\alpha\in\wedge^k\Delta^+_p$, the difference $2\rho-\alpha$ is 
   	a sum of positive roots, and hence not anti-dominant.  By the Borel-Weil-Bott 
	theorem, we have $\G(X,\ms{O}(2\rho-\alpha))=0$.  
   	It follows that $\G(X,\omega_X\otimes \wedge^k\ms{T}_X)=0$ for all $k\neq n$.
   
	The previous discussion implies that $p_+p^\circ\ms{V}[-n]\simeq \ms{V}$. 
	In fact, we proved something stronger:
	\begin{equation*}
		p_+[-n]|_{p^\circ\cl{M}(\ms{D}_\lambda)} = p_*|_{p^\circ\cl{M}(\ms{D}_\lambda)}.
	\end{equation*}
	Additionally, the adjointness morphism $\rm{ad}: \ms{V}\to p_+p^\circ\ms{V}[-n]$ is 
	 the identity.  Therefore, 
	\begin{equation*}
		\rm{Hom}_{\ms{D}_\lambda^p}(p^\circ \ms{V}, p^\circ\ms{W}) 
		= \rm{Hom}_{\ms{D}_\lambda}(\ms{V}, p_+p^\circ\ms{W}[-n])
		= \rm{Hom}_{\ms{D}_\lambda}(\ms{V},\ms{W}).
	\end{equation*}

      	Finally, we address the issue of commutativity with $\rm{\bf R}\G$. The 
      	equivalence $\ms{V}\simeq p_*p^\circ\ms{V}$ implies we have isomorphisms
      	\begin{equation*}
         		\rm{\bf R}\G(X_\theta, \ms{V}) = \rm{\bf R}\G(X_\theta, p_*p^\circ\ms{V}) 
		\simeq \rm{\bf R}\G(X, p^\circ\ms{V}),
      	\end{equation*}
      	of complexes of vector spaces.  That the $\cl{D}_{[\lambda]}$-actions agree is 
	a consequence of local triviality of the fibration $p: X\to X_\theta$.  Locally, the 
	tdo $\ms{D}_\lambda^p = \ms{D}_F \boxtimes \ms{D}_\lambda$ acts on 
	$p^\circ\ms{V} = \ms{O}_F\boxtimes\ms{V}$ factor-wise.   Therefore, the actions
	of $\cl{D}_{[\lambda]}$ and $\cl{U}_{[\lambda]}^p$ on 
	$\rm{\bf R}\G(X_\theta, \ms{V})$ are related by $p^*$.  
  	\end{proof}
  
  	\begin{cor}  The inverse image functor
	\begin{equation*}
		p^\circ: \rm{D^b}(\ms{D}_\lambda, K) \to \rm{D^b}(\ms{D}_\lambda^p, K)
	\end{equation*}	
	is fully faithful for all $\lambda$, and $\rm{\bf R}\G\circ p^\circ = p^*\circ \rm{\bf R}\G$.  
	\end{cor}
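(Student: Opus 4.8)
The plan is to bootstrap the Embedding Theorem \ref{embthmstatement}.\ --- already established at the level of weak Harish-Chandra sheaves --- up to the equivariant bounded derived category via the reduction principle, treating full faithfulness and compatibility with $\rm{\bf R}\G$ separately. For the formal setup: the projection $p\colon X\to X_\theta$ is $G$-equivariant, hence $K$-equivariant, and is smooth and proper with fibers of dimension $n=\dim X-\dim X_\theta$. Since $p$ is flat, $p^\circ$ is exact, and it visibly carries (weak) Harish-Chandra sheaves to (weak) Harish-Chandra sheaves while commuting with the homotopy maps $i$; so by the reduction principle it lifts to $p^\circ\colon\rm{D^b}(\ms{D}_\lambda,K)\to\rm{D^b}(\ms{D}_\lambda^p,K)$. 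The functor $p_+[-n]$ is right adjoint to $p^\circ$ on the non-equivariant derived categories by \S 2.3, and, $p$ being $K$-equivariant, a base-change argument of the kind used throughout \S 4 propagates this adjunction to the weakly equivariant categories and hence to $\rm{D^b}(\ms{D}_\lambda,K)$ and $\rm{D^b}(\ms{D}_\lambda^p,K)$.

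For full faithfulness I would reduce, as usual, to proving that the unit $\eta\colon\rm{id}\to p_+[-n]\circ p^\circ$ is an isomorphism; granting this, the adjunction gives $\rm{Hom}(p^\circ\ms{V}^\bul,p^\circ\ms{W}^\bul)\cong\rm{Hom}(\ms{V}^\bul,p_+p^\circ\ms{W}^\bul[-n])\cong\rm{Hom}(\ms{V}^\bul,\ms{W}^\bul)$, with the composite equal to $f\mapsto p^\circ f$. By the reduction principle, $\eta$ is an isomorphism in $\rm{D^b}(\ms{D}_\lambda,K)$ provided it is a quasi-isomorphism of the underlying complexes of weak Harish-Chandra sheaves, and here I would rerun the proof of Theorem \ref{embthmstatement}.\ verbatim: its inputs --- flatness of $p$, the projection formula, the identifications $\omega_{X/X_\theta}=\ms{O}(2\rho_\theta)$ and $\cl{T}^{-n}_{X/X_\theta}=\ms{O}(-2\rho_\theta)$, and the Borel--Weil--Bott vanishing $\G(X,\ms{O}(2\rho-\alpha))=0$ --- are all $G$-equivariant, hence insensitive to the extra weak $K$-structure. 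For a single weak Harish-Chandra sheaf $\ms{V}$ this yields $p_+p^\circ\ms{V}[-n]=p_*p^\circ\ms{V}\cong\ms{V}$ with $\eta$ the identity. For a complex $\ms{V}^\bul$, exactness of $p^\circ$ gives $\rm{H}^b(p^\circ\ms{V}^\bul)=p^\circ\rm{H}^b(\ms{V}^\bul)$, and the vanishing above forces $\rm{\bf R}^kp_*(p^\circ\rm{H}^b(\ms{V}^\bul)\otimes\cl{T}^j_{X/X_\theta})$ to be concentrated in bidegree $(k,j)=(0,-n)$; the hypercohomology spectral sequence for $p_+p^\circ\ms{V}^\bul[-n]$ therefore collapses and identifies $\rm{H}^q(\eta)$ with the identity of $\rm{H}^q(\ms{V}^\bul)$ for every $q$, so $\eta$ is a quasi-isomorphism.

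To identify $\rm{\bf R}\G\circ p^\circ$ with $p^*\circ\rm{\bf R}\G$ I would argue as in the closing paragraph of the proof of Theorem \ref{embthmstatement}.: the fibers of $p$ are flag varieties, so $\rm{\bf R}^{>0}p_*\ms{O}_X=0$, and by the projection formula $\rm{\bf R}^{>0}p_*p^\circ\ms{V}^\bul=\ms{V}^\bul\otimes\rm{\bf R}^{>0}p_*\ms{O}_X=0$; together with $p_*p^\circ\ms{V}^\bul\cong\ms{V}^\bul$ this gives $\rm{\bf R}p_*p^\circ\ms{V}^\bul\simeq\ms{V}^\bul$, whence $\rm{\bf R}\G(X,p^\circ\ms{V}^\bul)\simeq\rm{\bf R}\G(X_\theta,\rm{\bf R}p_*p^\circ\ms{V}^\bul)\simeq\rm{\bf R}\G(X_\theta,\ms{V}^\bul)$ by Leray. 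That the $\cl{U}_{[\lambda]}^p$- and $\cl{D}_{[\lambda]}$-module structures correspond through $p^*$ is the local-triviality observation from that same paragraph ($\ms{D}_\lambda^p=\ms{D}_F\boxtimes\ms{D}_\lambda$ acts on $p^\circ\ms{V}^\bul=\ms{O}_F\boxtimes\ms{V}^\bul$ factorwise over a trivializing open set); since $p$ is a $K$-equivariant morphism every isomorphism in sight respects the $K$-actions, so the identity holds in $\rm{D^b}(\cl{U}_{[\lambda]}^p,K)$.

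The step I expect to require the most care is precisely the passage from the module-level statements to the equivariant derived category: because $\rm{D^b}(\ms{D}_\lambda,K)$ is not the derived category of its heart $\cl{M}(\ms{D}_\lambda,K)$, one cannot argue by induction along distinguished triangles down to the abelian category, and everything must be routed through the reduction principle --- detecting the isomorphism $\eta$ on the underlying complexes of weak Harish-Chandra sheaves via cohomology. A secondary point to verify carefully is that the adjunction $p^\circ\dashv p_+[-n]$ and the surjective-submersion formula for $p_+$ are genuinely valid at the equivariant level, which is where the $K$-equivariance of $p$ and the base-change machinery of \S 4 are used.
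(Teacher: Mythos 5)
Your proposal is correct and follows essentially the same route as the paper: lift the Embedding Theorem and the adjunction $p^\circ\dashv p_+[-n]$ to weak Harish-Chandra sheaves using the $K$-equivariance of $p$ and the machinery of \S 4, then pass to $\rm{D^b}(\ms{D}_\lambda,K)$ via the reduction principle, with the compatibility $\rm{\bf R}\G\circ p^\circ = p^*\circ\rm{\bf R}\G$ argued exactly as in the abelian case. The only difference is one of detail: you make explicit the unit-isomorphism and hypercohomology spectral-sequence verification on complexes of weak Harish-Chandra sheaves, which the paper compresses into its appeal to full faithfulness on $\cl{M}_{w}(\ms{D}_\lambda,K)$ plus the reduction principle.
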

  	\begin{proof}
	The projection $p$ is $K$-equivariant.  Moreover, the proof of the theorem lifts to
	$\cl{M}_{w}(\ms{D}_\lambda,K)$.  We showed in \S \ref{gzfunctsec} that 
	the adjoint pair $p^\circ\dashv \rm{\bf R}^{-n}p_+$ on $\cl{M}_{w}(\ms{D}_\lambda,K)$ 
	defines an adjoint pair 
	\begin{equation*}\xymatrix{
		\rm{D^b}(\ms{D}_\lambda, K) \ar@<.5ex>[r]^{p^\circ} & 
		\rm{D^b}(\ms{D}_\lambda^p, K) \ar@<.5ex>[l]^{p_+[-n]}.}
	\end{equation*} 
	Since $p^\circ$ is fully faithful on $\cl{M}_{w}(\ms{D}_\lambda,K)$, the reduction 
	principle implies it is also fully faithful on $\rm{D^b}(\ms{D}_\lambda, K)$.

	The proof of the derived commutation property $\rm{\bf R}\G\circ p^\circ = p^*\circ 
	\rm{\bf R}\G$ is the same as the proof for abelian categories.  
	\end{proof}

	\begin{cor}
	For $\lambda\in\fr{h}_\theta^*$ anti-dominant, the functor 
	$\G: \cl{M}(\ms{D}_\lambda)\to 
	\cl{M}(\cl{D}_\lambda)$ is exact.  If $\lambda$ is also regular, then $\G$ is 
	also faithful.  
	\end{cor}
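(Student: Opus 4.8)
The plan is to reduce both assertions to the corresponding facts on the full flag variety $X$, transporting them along the inverse image $p^\circ$ by means of the Embedding Theorem. First I would record the two inputs. Since $p\colon X\to X_\theta$ is smooth, hence flat, the functor $p^\circ\colon\cl{M}(\ms{D}_\lambda)\to\cl{M}(\ms{D}_\lambda^p)$ is exact, and by Theorem \ref{embthmstatement}.\ it is moreover fully faithful and satisfies $\G\circ p^\circ=p^*\circ\G$, where $p^*$ is restriction of scalars along the natural ring homomorphism $\G(X,\ms{D}_\lambda^p)\to\cl{D}_\lambda$. Second, $\ms{D}_\lambda^p=\ms{D}_{\lambda-\rho_\theta}$ on $X$, and by definition $\lambda\in\fr{h}_\theta^*$ is anti-dominant (resp.\ regular) exactly when $\lambda-\rho_\theta\in\fr{h}^*$ is; so the theorem on cohomology of $\ms{D}$-modules on the full flag variety, applied with the parameter $\lambda-\rho_\theta$, shows that $\G\colon\cl{M}(\ms{D}_\lambda^p)\to\cl{M}(\G(X,\ms{D}_\lambda^p))$ is exact, and faithful once $\lambda$ is also regular.

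For exactness I would take a short exact sequence $0\to\ms{V}'\to\ms{V}\to\ms{V}''\to 0$ in $\cl{M}(\ms{D}_\lambda)$, apply the exact functor $p^\circ$ and then the exact global sections functor on $X$, and use the Embedding Theorem to identify the resulting short exact sequence of $\G(X,\ms{D}_\lambda^p)$-modules with $p^*$ applied to $\G\ms{V}'\to\G\ms{V}\to\G\ms{V}''$. Since $p^*$ is restriction of scalars it is conservative---it changes neither the underlying vector spaces nor the maps between them---so $0\to\G\ms{V}'\to\G\ms{V}\to\G\ms{V}''\to 0$ is itself exact; as global sections is automatically left exact, this is precisely the assertion that $\G$ is exact on $X_\theta$.

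For faithfulness, assuming in addition that $\lambda$ is regular, I would observe that $p^*\circ\G=\G\circ p^\circ$ is a composite of faithful functors: $p^\circ$ is faithful by the full faithfulness in the Embedding Theorem, and global sections on $X$ is faithful by the full flag variety theorem. Hence $\G\circ p^\circ$ is faithful, and since a functor $F$ for which $H\circ F$ is faithful must itself be faithful, $\G$ on $X_\theta$ is faithful.

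I do not expect a serious obstacle: the argument is formal once the Embedding Theorem is available. The only points that need care are the bookkeeping identifying $\ms{D}_\lambda^p$ with $\ms{D}_{\lambda-\rho_\theta}$ and translating anti-dominance and regularity from $\fr{h}_\theta^*$ to $\fr{h}^*$, and the remark that restriction of scalars along $\G(X,\ms{D}_\lambda^p)\to\cl{D}_\lambda$ is conservative, which is what permits descent from exactness of $\G\circ p^\circ$ back to exactness of $\G$ on $X_\theta$ itself.
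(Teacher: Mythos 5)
Your argument is correct and is essentially the paper's intended route: the corollary is stated as an immediate consequence of the Embedding Theorem, deduced exactly as you do by writing $\G\circ p^\circ=p^*\circ\G$, using exactness of $p^\circ$ (flatness of $p$), identifying $\ms{D}_\lambda^p=\ms{D}_{\lambda-\rho_\theta}$ with $\lambda-\rho_\theta$ anti-dominant (resp.\ regular) by the shifted definition on $\fr{h}_\theta^*$, and invoking the Beilinson--Bernstein exactness/faithfulness theorem on the full flag variety together with the fact that restriction of scalars $p^*$ reflects exactness and is faithful. The only cosmetic discrepancy is that the paper's statement of the full-flag theorem reads ``dominant'' where, by its own stated convention, ``anti-dominant'' is meant, and you have used the correct reading.
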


%%%%%
\subsection{Main Theorem}\label{mainthmsec}
%%%%%

 Let $X_\theta$ be the partial flag variety of $\fr{g}$
of type $\theta$.  Label the inclusion maps
\begin{equation}\label{basicmaps}
    \xymatrix{
        x \ar[r]_{i_x} \ar@/^1pc/[rr]^{j_x} & Q \ar[r]_i & X_\theta.
    }
\end{equation}
Let $p: X\to X_\theta$ be the natural fibration of the full flag variety $X$ over 
$X_\theta$.  From (\ref{basicmaps}) we obtain the following fiber products:
\begin{equation*}
    \xymatrix{
        F_x \ar[d]^{p} \ar[r]_{i_x} \ar@/^1pc/[rr]^{j_x} & 
        F_Q \ar[d]^{p} \ar[r]_i & X\ar[d]^{p} \\
        x \ar[r]^{i_x} \ar@/_1pc/[rr]_{j_x} & Q \ar[r]^i & X_\theta.
    }
\end{equation*}

For a fixed point $x\in Q$, let $\fr{p}_x$ be the parabolic subalgebra of $\fr{g}$ 
corresponding to $x$, and let $S_x$ be the stabilizer of $x$ in $K$.  Let $\fr{n}_x$ 
be the nilpotent  subalgebra of $\fr{p}_x$.  For a $(\fr{p}_x,S_x)$-module  $Z$, define 
  $Z^\#:=Z\otimes\wedge^{d_{X_\theta}}\bar{\fr{n}}_x$ as
  a $(\fr{p}_x,S_x)$-module via the adjoint action on the twisting factor.
  For a $(\fr{p}_x,S_x)$-module $V$, the induced $(\fr{g},S_x)$-module is
$\rm{ind}_{\fr{p}_x,S_x}^{\fr{g},S_x}(V) := \cl{U}(\fr{g})\otimes_{\cl{U}(\fr{p}_x)}V$.

\begin{thm}[Main Theorem \ref{mainthm}] Let $\ms{D}_\lambda$ be a homogeneous
 tdo on $X_\theta$ and $\tau$ a connection on a $K$-orbit $Q$ compatible 
 with $\lambda+\rho_n$.    Then there is a quasi-isomorphism 
\begin{equation*}
   \rm{\bf R}\G(X,p^\circ i_+ \tau) \simeq \G^\rm{equi}_{K,S_x}
   (\rm{ind}_{\fr{p}_x,S_x}^{\fr{g},S_x}(T_x\tau^\#))[d_Q]
\end{equation*}
in $\rm{D^b}(\cl{U}_{[\lambda-\rho_\theta]}, K)$.  
Upon taking cohomology, there is a convergent spectral sequence 
\begin{equation}\label{}
     \rm{\bf R}^p\G(X,p^\circ \rm{\bf R}^qi_+\tau) \Longrightarrow 
     \rm{\bf R}^{p+q+d_Q}\G_{K,S_x}(\rm{ind}_{\fr{p}_x, S_x}^{\fr{g},S_x}
     (T_x\tau^\#)).     
\end{equation}  
\end{thm}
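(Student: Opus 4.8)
The plan is to chain together the commutativity results established in Section 4 with the Embedding Theorem of Section 5. First I would rewrite $p^\circ i_+\tau$ using the isomorphism $\tau \simeq \G^\rm{geo}_{K,S_x}i_{x+}T_x\tau[d_Q]$ from the proposition at the end of Section 4, so that
\begin{equation*}
   p^\circ i_+\tau \simeq p^\circ i_+\G^\rm{geo}_{K,S_x}i_{x+}T_x\tau[d_Q].
\end{equation*}
Next I would use the commutativity of $i_+$ with $\G^\rm{geo}_{K,S_x}$ and the commutativity of $p^\circ$ with $\G^\rm{geo}_{K,S_x}$ (both established by base change in Section 4) to move the geometric Zuckerman functor to the outside:
\begin{equation*}
   p^\circ i_+\G^\rm{geo}_{K,S_x}i_{x+}T_x\tau[d_Q] \simeq
   \G^\rm{geo}_{K,S_x}\, p^\circ i_+ i_{x+}T_x\tau[d_Q] \simeq
   \G^\rm{geo}_{K,S_x}\, p^\circ j_{x+}T_x\tau[d_Q],
\end{equation*}
since $j_x = i\circ i_x$. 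Then I would apply $\rm{\bf R}\G(X,-)$ and invoke Proposition \ref{equicommprop}.\ (the commutativity of $\rm{\bf R}\G$ with the two Zuckerman functors) to get
\begin{equation*}
   \rm{\bf R}\G(X,p^\circ i_+\tau) \simeq
   \G^\rm{equi}_{K,S_x}\,\rm{\bf R}\G(X,p^\circ j_{x+}T_x\tau)[d_Q].
\end{equation*}

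The remaining task is to identify $\rm{\bf R}\G(X,p^\circ j_{x+}T_x\tau)$ as the induced module $\rm{ind}_{\fr{p}_x,S_x}^{\fr{g},S_x}(T_x\tau^\#)$ as a $\cl{U}(\fr{g})$-module, up to the $\rho$-shifts. Here I would use the Embedding Theorem: $\rm{\bf R}\G\circ p^\circ = p^*\circ \rm{\bf R}\G$ for $\lambda$ anti-dominant, together with the compatibility hypothesis relating $\tau$, $\ms{D}^i$, and $\lambda+\rho_n$, and the identity $\ms{D}_\lambda^p = \ms{D}_{\lambda-\rho_\theta}$ from Section 2. Computing $\rm{\bf R}\G(X_\theta, j_{x+}T_x\tau)$ reduces, via the closed embedding $j_x$, to the Kostant-type computation: the direct image of a connection along a point into a flag variety has global sections given by an induced module, and the twisting factor $\wedge^{\rm{top}}\bar{\fr{n}}_x$ appears precisely because $j_{x+}$ carries the relative-canonical-bundle twist $\omega$, just as in the original duality theorem (Theorem \ref{dualitythm}.). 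The bookkeeping of which $\bar{\fr{n}}$ — the full one versus the Levi part — and the matching of $d_Q$ versus $d_{X_\theta}$ in the definition of $\tau^\#$ must be done carefully; this is where the shift by $[d_Q]$ in the final formula originates, combining the $[d_Q]$ from the inverse to $T_x$ with the normalization shifts in $\G^\rm{equi}$ and $p_+$.

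Having established the quasi-isomorphism in $\rm{D^b}(\cl{U}_{[\lambda-\rho_\theta]}, K)$, the spectral sequence follows formally: I would apply the (hyper)cohomology spectral sequence for the composition $\rm{\bf R}\G(X,-)\circ p^\circ$ applied to the complex $i_+\tau$, whose cohomology sheaves are the $\rm{\bf R}^qi_+\tau$, so that
\begin{equation*}
   \rm{\bf R}^p\G(X,p^\circ\rm{\bf R}^qi_+\tau) \Longrightarrow
   \rm{\bf R}^{p+q}\G(X,p^\circ i_+\tau);
\end{equation*}
then I substitute the right-hand side using the already-established quasi-isomorphism and equation (\ref{equal}) of Section 3, which identifies $\rm{H}^n(\G^\rm{equi}_{K,S_x}-)$ with $\rm{\bf R}^n\G_{K,S_x}(-)$. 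Convergence is automatic since everything lives in bounded derived categories (all complexes have finite-dimensional, bounded cohomology on projective varieties), so the spectral sequence is first-quadrant up to a fixed shift and hence converges.

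The main obstacle I anticipate is the second paragraph: pinning down the $\cl{U}(\fr{g})$-module structure on $\rm{\bf R}\G(X,p^\circ j_{x+}T_x\tau)$ and getting every $\rho_\theta$, $\rho_n$, $d_Q$, and $d_{X_\theta}$ shift exactly right. The geometric manipulations with $\G^\rm{geo}$ and base change are essentially formal once the commutativity propositions of Section 4 are in hand, and the spectral sequence is a standard consequence; but reconciling the inverse-image computation on the full flag variety $X$ with the Zuckerman-induction picture on $X_\theta$ — in particular showing that the failure of affineness of $i:Q\to X_\theta$ is exactly absorbed by passing to $p^\circ$ and that the Embedding Theorem's commutation $\rm{\bf R}\G\circ p^\circ = p^*\circ\rm{\bf R}\G$ transports the module structure correctly — is the delicate heart of the argument and will require the explicit local triviality description of $\ms{D}_\lambda^p = \ms{D}_F\boxtimes\ms{D}_\lambda$ used at the end of the Embedding Theorem's proof.
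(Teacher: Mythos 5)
Your proposal is correct and follows essentially the same route as the paper's proof: you realize $i_+\tau$ as $\G^{\rm{geo}}_{K,S_x}j_{x+}T_x\tau[d_Q]$, move $p^\circ$ and $\rm{\bf R}\G$ past the geometric Zuckerman functor using the commutativity propositions of Section 4, identify $\rm{\bf R}\G(X,p^\circ j_{x+}T_x\tau)$ with $\rm{ind}_{\fr{p}_x,S_x}^{\fr{g},S_x}(T_x\tau^\#)$ via the Embedding Theorem and the transfer-bimodule (Kostant-type) computation producing the $\wedge^{\rm{top}}\bar{\fr{n}}_x$ twist, and obtain the spectral sequence from the formal hypercohomology/Leray argument together with the identification $\rm{H}^n(\G^{\rm{equi}}_{K,S_x})=\rm{\bf R}^n\G_{K,S_x}$. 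The bookkeeping you flag as delicate is handled in the paper exactly as you anticipate, via $j_{x+}T_x\tau = j_{x*}(\ms{D}_{X_\theta\leftarrow x}\otimes T_x\tau)$ and $T_x\omega_{X_\theta}^{-1}=\wedge^{\rm{top}}\bar{\fr{n}}_x$.
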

\begin{proof} 
The results of \S \ref{gzfunctsec} define  $\G_{K,S_x}^\rm{geo}$
in the general derived equivariant setting and establish the isomorphism
$i_+\tau \simeq \G^{\rm{geo}}_{K,S_x}j_{x+}T_x\tau[d_Q]$ and 
commutativity properties ($\G_{K,S_x}^{\rm{geo}}i_+=i_+\G_{K,S_x}$ and $\rm{\bf R}\G\circ \G_{K,S_x}^{\rm{geo}}= \G_{K,S_x}^{\rm{equi}}\circ \rm{\bf R}\G$),  which culminate in the natural isomorphisms
\begin{equation} \label{result}
   \begin{array}{rcl}
   \rm{\bf R}\G(X, p^\circ i_+(\text{ - })) & \simeq &
    \rm{\bf R}\G(X,p^\circ \G_{K,S_x}^\rm{geo} j_{x+}T_x(\text{ - }))[d_Q] \\
   &\simeq &\G_{K,S_x}^\rm{equi} \rm{\bf R}\G(X, p^\circ j_{x+}T_x(\text{ - }))[d_Q].
   \end{array}
 \end{equation}
 Then by definition, we have $j_{x+}T_x\tau = j_{x*}(\ms{D}_{X_\theta\leftarrow x}\otimes T_x\tau)$, so 
 \begin{equation}
 \begin{array}{rcl}
   \rm{\bf R}\G(X, p^\circ i_+\tau) & \simeq & 
    \G_{K,S_x}^\rm{equi} (\ms{D}_{X_\theta\leftarrow x}\otimes T_x\tau)[d_Q] \\
   & = & \G_{K,S_x}^\rm{equi} (\cl{U}(\fr{g})/\fr{p}_x\cl{U}(\fr{g}) \otimes 
   T_x\omega_{X_\theta}^{-1}\otimes T_x\tau)[d_Q].  \\
   \end{array}
\end{equation}
Note that $T_x\omega_{X_\theta}^{-1} = \wedge^\rm{top}\bar{\fr{n}}_x$ and that
the parabolic $\fr{p}_x$ acts on $T_x\tau^\#$ by the $F_x$-invariant linear form
$\lambda-\rho_n$.  Therefore, there is an isomorphism
\begin{equation*}
	\cl{U}(\fr{g})/\fr{p}_x\cl{U}(\fr{g})\otimes T_x\tau^\# \simeq \rm{ind}_{\fr{p}_x,S_x}^{\fr{g},S_x}
	(T_x\tau^\#).
\end{equation*}
Finally, since $i$ is a locally closed immersion and so the composition of the derived
$\ms{O}_X$-module direct image $\rm{\bf R}i_*$ with an exact functor.  The spectral sequence is then seen to follow precisely from the Leray spectral sequence
$\rm{\bf R}^p\G\rm{\bf R}^qi_* \implies \rm{\bf R}^{p+q}(\G\circ i_*)$.
\end{proof}

When $i_+$ is exact, the left hand side of the spectral sequence collapses, and we have the following.
\begin{cor}  If $i_+$ is exact then the isomorphism
%\begin{equation}
 $    \rm{\bf R}^p\G(X,p^\circ i_+\tau) \simeq \rm{\bf R}^{p+d_Q}\G_{K,S_x}
     (\rm{ind}_{\fr{p}_x, S_x}^{\fr{g},S_x}(T_x\tau^\#))$ holds for all $p$.
%\end{equation}  
\end{cor}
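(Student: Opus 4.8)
The plan is to deduce the corollary directly from the convergent spectral sequence (\ref{specseq}) of Theorem \ref{mainthm}; almost all of the work has already been done, so only a spectral-sequence degeneration argument remains. First I would recall that $\rm{\bf R}^q i_+\tau$ denotes the $q$-th cohomology sheaf of the complex $i_+\tau \in \rm{D^b}(\ms{D}_\lambda^i)$, and observe that the hypothesis that $i_+$ is exact --- which holds, for instance, when $Q$ is affinely embedded in $X_\theta$, so that $i$ is an affine immersion and the exactness statements of \S 2 apply --- says precisely that $i_+\tau$ is concentrated in cohomological degree $0$; equivalently $\rm{\bf R}^q i_+\tau = 0$ for $q\neq 0$ while $\rm{\bf R}^0 i_+\tau = i_+\tau$.

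Next I would feed this vanishing into (\ref{specseq}). Its $E_2$-page is then supported on the single row $q = 0$, where $E_2^{p,0} = \rm{\bf R}^p\G(X, p^\circ i_+\tau)$. A cohomological spectral sequence concentrated in one row degenerates at $E_2$: for $r\geq 2$ every differential into or out of $E_r^{p,0}$ has a zero group at the other end, so $E_2^{p,0} = E_\infty^{p,0}$, and the filtration on the abutment in total degree $p$ has a single nonzero graded piece, leaving no extension problem. Since the abutment in total degree $p$ is $\rm{\bf R}^{p+d_Q}\G_{K,S_x}(\rm{ind}_{\fr{p}_x,S_x}^{\fr{g},S_x}(T_x\tau^\#))$, this produces the asserted isomorphism for every $p$.

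An equivalent and perhaps more transparent route is to take cohomology directly in the quasi-isomorphism (\ref{maindciso}): once $i_+\tau$ is a single sheaf, $\rm{H}^p$ of its left-hand side is exactly $\rm{\bf R}^p\G(X, p^\circ i_+\tau)$, while $\rm{H}^p$ of the right-hand side is $\rm{H}^{p+d_Q}\bigl(\G^\rm{equi}_{K,S_x}(\rm{ind}_{\fr{p}_x,S_x}^{\fr{g},S_x}(T_x\tau^\#))\bigr)$, which equals $\rm{\bf R}^{p+d_Q}\G_{K,S_x}(\rm{ind}_{\fr{p}_x,S_x}^{\fr{g},S_x}(T_x\tau^\#))$ by Pand\v zi\'c's identity (\ref{equal}). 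There is no genuine obstacle in either approach; the one thing to handle carefully is the cohomological bookkeeping --- the shift by $d_Q$ and the indexing conventions for $\rm{\bf R}^q i_+$ and the derived equivariant Zuckerman functor --- which is exactly why this is isolated as a corollary rather than folded into Theorem \ref{mainthm}.
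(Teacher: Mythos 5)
Your argument is correct and matches the paper's own reasoning: the paper deduces this corollary in one line by noting that exactness of $i_+$ kills all rows $q\neq 0$ of the spectral sequence (\ref{specseq}), so it collapses and the $q=0$ row gives the isomorphism, exactly as in your first paragraph. Your alternative route through (\ref{maindciso}) and (\ref{equal}) is a harmless rephrasing of the same degeneration, not a genuinely different proof.
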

\noindent This is the case for any orbit $Q$ in the full flag variety.  
Another example is for the open orbit in any partial flag variety of $\fr{g}$ if the Cartan 
involution defining $K$ is quasi-split. 
Alternatively, if we are working with twisted differential operators, but take $\lambda$ 
to be anti-dominant, then $\G$ is exact, and we again find the left hand
 side collapses.
\begin{cor}  For $\lambda$ anti-dominant, we have for all $q$ an isomorphism 
 $$    \G(X,p^\circ \rm{\bf R}^qi_+\tau) \simeq \rm{\bf R}^{q+d_Q}\G_{K,S_x}
     (\rm{ind}_{\fr{p}_x, S_x}^{\fr{g},S_x}
     ( T_x\tau^\#)).$$
\end{cor}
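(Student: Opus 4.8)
The plan is to obtain the corollary by collapsing the spectral sequence (\ref{specseq}) of Theorem \ref{mainthm}. Its $E_2$-term is $E_2^{p,q}=\rm{\bf R}^p\G(X,p^\circ\rm{\bf R}^q i_+\tau)$, so it suffices to show that for $\lambda$ anti-dominant these terms vanish whenever $p>0$. Once that is known, the spectral sequence is concentrated in the single column $p=0$, hence degenerates at $E_2$, and the abutment in total degree $q$ has $E_2^{0,q}=\G(X,p^\circ\rm{\bf R}^q i_+\tau)$ as its only graded piece; this produces, for every $q$, the asserted isomorphism
\[
 \G(X,p^\circ\rm{\bf R}^q i_+\tau)\;\simeq\;\rm{\bf R}^{q+d_Q}\G_{K,S_x}\bigl(\rm{ind}_{\fr{p}_x,S_x}^{\fr{g},S_x}(T_x\tau^\#)\bigr).
\]

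To establish the vanishing I would show that the functor $\ms{F}\mapsto\rm{\bf R}\G(X,p^\circ\ms{F})$ on $\cl{M}(\ms{D}_\lambda^p,K)$ is concentrated in degree $0$. Since $p$ is smooth, $p^\circ$ is exact, so this functor is the right derived functor of $\G\circ p^\circ$, and it is enough to check that $\G\circ p^\circ$ is exact: an exact functor has its higher derived functors identically zero. By the Embedding Theorem \ref{embthmstatement} and its equivariant corollary we have $\G\circ p^\circ=p^*\circ\G$ on Harish-Chandra sheaves, where $p^*$ is restriction of scalars along the algebra map $\cl{U}_{[\lambda]}^p\to\cl{D}_{[\lambda]}$ and is therefore exact; and the corollary of the Embedding Theorem recording the exactness of $\G\colon\cl{M}(\ms{D}_\lambda)\to\cl{M}(\cl{D}_{[\lambda]})$ for anti-dominant $\lambda$ lifts to the equivariant category, since exactness is detected after forgetting the $K$-equivariant structure. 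Thus $\G\circ p^\circ$ is a composite of exact functors, hence exact, giving $\rm{\bf R}^p\G(X,p^\circ\ms{F})=0$ for all $p>0$ and all $\ms{F}\in\cl{M}(\ms{D}_\lambda^p,K)$. Applying this with $\ms{F}=\rm{\bf R}^q i_+\tau$ feeds back into the first paragraph and finishes the argument.

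I do not expect a genuine obstacle here: the corollary is in essence a one-line degeneration argument, as the remark preceding its statement indicates, and the only thing requiring a little care is the bookkeeping around exactness — namely that the exactness of $\G$ for anti-dominant twisting parameters (equivalently, the vanishing of the higher cohomology of $\ms{D}_\lambda$-modules on $X_\theta$) may legitimately be invoked for the $K$-equivariant sheaves $\rm{\bf R}^q i_+\tau$ and then transported through $p^\circ$ to the full flag variety by the Embedding Theorem. As a sanity check, one may instead observe that $p^\circ\rm{\bf R}^q i_+\tau$ lies in $\cl{M}(\ms{D}_\lambda^p)=\cl{M}(\ms{D}_{\lambda-\rho_\theta})$ on the full flag variety, where anti-dominance of $\lambda$ forces anti-dominance of $\lambda-\rho_\theta$ and hence the same higher-cohomology vanishing directly.
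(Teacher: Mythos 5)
Your proposal is correct and follows essentially the same route the paper intends: the remark preceding the corollary simply invokes exactness of $\G$ for anti-dominant $\lambda$ (via the Embedding Theorem and its corollaries, equivalently the identification $\ms{D}_\lambda^p=\ms{D}_{\lambda-\rho_\theta}$ together with the shifted definition of anti-dominance) to kill the terms with $p>0$, so the spectral sequence (\ref{specseq}) is concentrated in the column $p=0$ and degenerates, yielding the stated isomorphism. Your write-up just makes this collapse argument explicit, including the point that exactness can be checked after forgetting the $K$-equivariant structure.
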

\noindent Finally, we can combine the two corollaries to obtain a third:
\begin{cor}
  For $\lambda$ anti-dominant and $i_+$ exact, we have
  \begin{equation*}
     \G(X,p^\circ i_+\tau) \simeq \rm{\bf R}^{d_Q}\G_{K,S_x}(\rm{ind}_{\fr{p}_x ,S_x}^{\fr{g},S_x}
     (T_x\tau^\#))
  \end{equation*}
  and all other derived Zuckerman functors vanish. 
\end{cor}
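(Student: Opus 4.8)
The plan is to obtain this corollary by putting together the two corollaries immediately preceding it, whose hypotheses are now simultaneously in force. First I would pin down the shape of $\rm{\bf R}i_+\tau$: since $i$ is a locally closed immersion, $\rm{\bf R}^q i_+\tau$ vanishes for $q<0$ automatically, and exactness of $i_+$ forces it to vanish for $q>0$ as well, so $\rm{\bf R}^q i_+\tau$ is the single Harish-Chandra sheaf $i_+\tau$ for $q=0$ and is $0$ otherwise. In the language of Theorem \ref{mainthm}.\ this says the $E_2$-page of the spectral sequence (\ref{specseq}) is concentrated in the row $q=0$, with $E_2^{p,0}=\rm{\bf R}^p\G(X,p^\circ i_+\tau)$.

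Next I would use anti-dominance of $\lambda$: by the corollaries to the Embedding Theorem, $\G$ is exact on $\cl{M}(\ms{D}_\lambda)$ — hence has vanishing higher derived functors there — and $\rm{\bf R}\G\circ p^\circ = p^*\circ\rm{\bf R}\G$, so the complex $\rm{\bf R}\G(X,p^\circ i_+\tau)$ is concentrated in degree $0$, where it equals $\G(X,p^\circ i_+\tau)$. Together with the first step this concentrates the $E_2$-page at the single bidegree $(0,0)$; the spectral sequence then degenerates with no room for a nontrivial filtration, and matching $E_2^{0,0}$ against the abutment $\rm{\bf R}^{d_Q+p+q}\G_{K,S_x}(\rm{ind}_{\fr{p}_x,S_x}^{\fr{g},S_x}(T_x\tau^\#))$ yields both the stated isomorphism in degree $d_Q$ and the vanishing of $\rm{\bf R}^j\G_{K,S_x}(\rm{ind}_{\fr{p}_x,S_x}^{\fr{g},S_x}(T_x\tau^\#))$ for every $j\neq d_Q$.

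Equivalently — and this is how the preamble to the corollary suggests proceeding — one can substitute directly: the corollary for $\lambda$ anti-dominant gives $\G(X,p^\circ\rm{\bf R}^q i_+\tau)\simeq\rm{\bf R}^{q+d_Q}\G_{K,S_x}(\rm{ind}_{\fr{p}_x,S_x}^{\fr{g},S_x}(T_x\tau^\#))$ for all $q$, so feeding in $\rm{\bf R}^q i_+\tau=0$ for $q\neq 0$ kills $\rm{\bf R}^j\G_{K,S_x}(\rm{ind}_{\fr{p}_x,S_x}^{\fr{g},S_x}(T_x\tau^\#))$ for $j\neq d_Q$, and $q=0$ gives the isomorphism; the corollary for $i_+$ exact then supplies the same two conclusions from the complementary side (the columns $\rm{\bf R}^p\G$ collapsing for $p\neq 0$ by anti-dominance), confirming consistency. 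I do not expect a genuine obstacle here: the only things to be careful about are the bookkeeping of the two degree conventions — the sheaf-cohomology index on the full flag variety versus the Zuckerman index, shifted by $d_Q$ — and noting that a single surviving $E_2$-term leaves no ambiguity, so the isomorphism is exact rather than merely up to associated graded pieces.
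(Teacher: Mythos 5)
Your proposal is correct and matches the paper's own (very brief) argument: the paper simply combines the two preceding corollaries, exactly as you do, letting exactness of $i_+$ and anti-dominance of $\lambda$ collapse the spectral sequence (\ref{specseq}) to the single term at $(p,q)=(0,0)$, which yields both the isomorphism in degree $d_Q$ and the vanishing of all other derived Zuckerman functors. Your bookkeeping of the $d_Q$ shift and the observation that a single surviving $E_2$-term leaves no filtration ambiguity are exactly the points the paper leaves implicit.
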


Of possibly the greatest significance is the fact that the convergent spectral
sequence (\ref{specseq}) determines equalities in the Grothendieck group
\begin{equation}\label{kgp}
    [\rm{\bf R}^{n+d_Q}\G_{K,S_x}(\rm{ind}_{\fr{p}_x,S_x}^{\fr{g},S_x}(T_x\tau^\#))] 
    = \sum_{p+q=n} [\rm{\bf R}^p\G(X,p^\circ \rm{\bf R}^qi_+\tau)],
\end{equation}
which may give additional geometric insight into the computation of composition 
series of degenerate principal series.  Low rank examples of applications of this result are discussed in Chapter 8.\ of \cite{k}. 

%%%%
\subsection{Duality and Cohomologically Induced Modules}
%%%%%

	Fix a Levi subgroup $L_x$ of $F_x$.  Then, $L_x\cap K$ is a maximal 
	reductive subgroup of $S_x$.  The representation $T_x\tau$ of the previous 
	section is really a representation of $L_x\cap K$ extended trivially to $S_x$.  For 
	such representations $V$, we have equality of underlying $\fr{g}$-modules
	\begin{equation*}
		\rm{ind}_{\fr{p}_x,S_x}^{\fr{g},S_x}(V) = \rm{ind}_{\fr{p},L_x}^{\fr{g},L_x}(V).  
	\end{equation*}
	Abstractly, let $\fr{p}\subset \fr{g}$ be any parabolic, and let $L\subset K$ be 
	a reductive subgroup such that $\fr{l}\subset \fr{p}$.  Then, the left adjoint to 
	the forgetful functor from $(\fr{g}, L)$-modules to $(\fr{p},L)$-modules is 
	$\rm{ind}_{\fr{p},L}^{\fr{g},L}$ and the right adjoint is
	\begin{equation*}
		\rm{pro}_{\fr{p},L}^{\fr{g},L}(\text{ - }) = 
		\rm{Hom}_{\fr{p}}(\cl{U}(\fr{g}), \text{ - })_{[L]},
	\end{equation*}  
	where the $[L]$ indicates that we take $L$-finite vectors.  For any 
	$(\fr{g},K)$-module $V$, define the contragredient $V^{\vee} =V^*_{[K]}$.  Then, 
	we have the following lemma.
\begin{lem}[\cite{hmsw}, Lemma 3.1]
	For $V$ any $(\fr{p},L)$-module, 
	\begin{equation*}
		\rm{ind}_{\fr{p},L}^{\fr{g},L}(V)^\vee = \rm{pro}_{\fr{p},L}^{\fr{g},L}(V^\vee).
	\end{equation*}
\end{lem}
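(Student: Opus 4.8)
The plan is to establish the duality $\rm{ind}_{\fr{p},L}^{\fr{g},L}(V)^\vee = \rm{pro}_{\fr{p},L}^{\fr{g},L}(V^\vee)$ by an adjointness argument, exploiting the fact that contragredient is (anti)self-adjoint as an operation on $(\fr{g},K)$-modules, together with the established adjunctions for $\rm{ind}$ and $\rm{pro}$. First I would record the two adjunctions stated just above the lemma: for the forgetful functor $\rm{For}$ from $(\fr{g},L)$-modules to $(\fr{p},L)$-modules, we have $\rm{ind}_{\fr{p},L}^{\fr{g},L}$ left adjoint to $\rm{For}$ and $\rm{pro}_{\fr{p},L}^{\fr{g},L}$ right adjoint to $\rm{For}$. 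Next I would record the basic fact that for $M$ an admissible (more generally, suitably finite) $(\fr{g},K)$-module, $V\mapsto V^\vee$ is an exact contravariant involution, and likewise on $(\fr{p},L)$-modules using $L$-finite vectors; in particular $(V^\vee)^\vee \cong V$ naturally. One must also note that contragredient intertwines restriction with itself: $(\rm{For}\,M)^\vee \cong \rm{For}(M^\vee)$, since taking $\fr{p}$-, $L$-structure commutes with dualizing $L$-finite vectors.

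The core computation is then a chain of natural isomorphisms of Hom-spaces. For an arbitrary $(\fr{g},L)$-module $M$,
\begin{equation*}
\rm{Hom}_{(\fr{g},L)}(M, \rm{ind}_{\fr{p},L}^{\fr{g},L}(V)^\vee) \cong \rm{Hom}_{(\fr{g},L)}(\rm{ind}_{\fr{p},L}^{\fr{g},L}(V), M^\vee)
\end{equation*}
by the (anti)self-adjointness of $(\text{ - })^\vee$; then
\begin{equation*}
\rm{Hom}_{(\fr{g},L)}(\rm{ind}_{\fr{p},L}^{\fr{g},L}(V), M^\vee) \cong \rm{Hom}_{(\fr{p},L)}(V, \rm{For}(M^\vee)) \cong \rm{Hom}_{(\fr{p},L)}(V, (\rm{For}\,M)^\vee)
\end{equation*}
using the $\rm{ind}\dashv\rm{For}$ adjunction and the compatibility of $(\text{ - })^\vee$ with restriction; applying self-adjointness of $(\text{ - })^\vee$ on $(\fr{p},L)$-modules gives
\begin{equation*}
\rm{Hom}_{(\fr{p},L)}(V, (\rm{For}\,M)^\vee) \cong \rm{Hom}_{(\fr{p},L)}(\rm{For}\,M, V^\vee) \cong \rm{Hom}_{(\fr{g},L)}(M, \rm{pro}_{\fr{p},L}^{\fr{g},L}(V^\vee))
\end{equation*}
by the $\rm{For}\dashv\rm{pro}$ adjunction. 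Composing these and invoking Yoneda yields the claimed natural isomorphism $\rm{ind}_{\fr{p},L}^{\fr{g},L}(V)^\vee \cong \rm{pro}_{\fr{p},L}^{\fr{g},L}(V^\vee)$.

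The main obstacle is not the formal diagram chase but the bookkeeping needed to make all the dualities legitimate: one must work in categories where $(\text{ - })^\vee$ is genuinely an involution, which forces a finiteness hypothesis ($L$-finiteness, respectively $K$-finiteness and admissibility) that has to be checked to be preserved by $\rm{ind}$ — a priori $\rm{ind}_{\fr{p},L}^{\fr{g},L}(V)$ need not be admissible even when $V$ is finite-dimensional. The cleanest fix is to observe that in the intended application $V = T_x\tau^\#$ is finite-dimensional, so $\rm{ind}_{\fr{p},L}^{\fr{g},L}(V)$ is finitely generated over $\cl{U}(\fr{g})$ and its contragredient is then $\rm{pro}_{\fr{p},L}^{\fr{g},L}(V^\vee)$ in the sense that matters; alternatively, one cites \cite{hmsw}, Lemma 3.1 directly for this, since the lemma is stated there for precisely this setting. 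I would present the adjointness chain as the conceptual proof and remark that the requisite finiteness conditions hold for the modules of interest, referring to \cite{hmsw} for the details of the general statement.
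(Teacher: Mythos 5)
The paper never proves this lemma itself; it is quoted directly from \cite{hmsw} (Lemma 3.1), so there is no internal proof to compare against and the only question is whether your argument stands on its own. It does: the chain of adjunctions $\rm{ind}\dashv\rm{For}\dashv\rm{pro}$ combined with the swap isomorphism and Yoneda is a complete proof. In fact you are more cautious than you need to be. The only duality input your chain uses is the swap $\rm{Hom}_{(\fr{g},L)}(M,N^\vee)\cong\rm{Hom}_{(\fr{g},L)}(N,M^\vee)$, given by $f\mapsto\bigl(n\mapsto f(\,\cdot\,)(n)\bigr)$, and this holds for \emph{arbitrary} modules with locally finite $L$-action: the image of an $L$-finite vector under an equivariant map into a full dual is automatically $L$-finite. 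No step ever invokes $(\,\cdot\,)^{\vee\vee}\cong\mathrm{id}$, so admissibility of $\rm{ind}_{\fr{p},L}^{\fr{g},L}(V)$ is irrelevant and the retreat at the end (restricting to finite-dimensional $V$ or deferring to \cite{hmsw} for the general case) is unnecessary --- the lemma holds, by your own argument, for any $(\fr{p},L)$-module $V$, which is exactly the generality in which it is stated. For comparison, the standard direct proof behind the cited lemma is the tensor--hom adjunction $(\cl{U}(\fr{g})\otimes_{\cl{U}(\fr{p})}V)^*\cong\rm{Hom}_{\fr{p}}(\cl{U}(\fr{g}),V^*)$ followed by passage to $L$-finite vectors, where one checks that an $L$-finite homomorphism $\cl{U}(\fr{g})\to V^*$ automatically takes values in $V^\vee$ because $\rm{Ad}(L)u$ spans a finite-dimensional subspace of $\cl{U}(\fr{g})$. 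Your Yoneda formulation gives naturality for free and is conceptually cleaner; the direct computation has the advantage of exhibiting the isomorphism explicitly. Either is acceptable; just drop the spurious finiteness hedge or replace it with the one-line verification of the swap isomorphism above.
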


To identify the modules of Theorem \ref{mainthm}.\ as contragredient to 
cohomologically induced modules, we need to introduce Zuckerman duality:

\begin{thm}
	Let $G$, $P$, and $L$ be as above and let $V$ be a $(\fr{l},L\cap K)$-module.
	 Let $\fr{n}$ be the nilradical of $\fr{p}$ and let $\fr{o} = \fr{k}\cap \fr{n}\oplus 
	 \fr{k}\cap \bar{\fr{n}}$ and $s = \dim{\fr{k}\cap\fr{n}}$.  Then, for all $i\geq 0$, 
	 there is an isomorphism of $(\fr{g},K)$-modules
	\begin{equation*}
		\G_{K,L\cap K}^i(V^\vee) \simeq \G_{K,L\cap K}^{2s-i}
		(\wedge^\rm{top}\fr{o}\otimes V)^\vee.
	\end{equation*}
\end{thm}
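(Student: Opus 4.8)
The plan is to prove this by the route standard for the ``hard duality'' of the Zuckerman functor: realize $\rm{\bf R}\G_{K,T}$ (write $T=L\cap K$, $\fr{t}=\fr{l}\cap\fr{k}$, and regard $V$ as a $(\fr{g},L\cap K)$-module) by the explicit complex of \S 3, and then invoke the self-duality of that complex. By (\ref{equal}) the module $\rm{\bf R}^i\G_{K,T}(Z)$ is the $i$-th cohomology of $\G^{\rm{equi}}_{K,T}(Z)=\rm{Hom}^\bul(\cl{N}(\fr{k}),R(K)\otimes Z)^{(\fr{k},T)}$, and since $\cl{N}(\fr{k})$ is the Koszul resolution of $\CC$ over $\cl{U}(\fr{k})$, this complex is a model for relative Lie algebra cohomology $H^\bul(\fr{k},\fr{t};\, R(K)\otimes Z)$ with coefficients in the production bimodule $R(K)\otimes(-)$; its terms are built from $\wedge^\bul\fr{o}$ after passing to $(\fr{k},T)$-invariants, where $\fr{k}=\fr{t}\oplus\fr{o}$ as $T$-modules. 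The Lie-theoretic input I will use is that, with the given (Cartan-involution-compatible) choice of Levi, $\dim\fr{o}=\dim(\fr{k}\cap\fr{n})+\dim(\fr{k}\cap\bar{\fr{n}})=2s$ (the Killing form pairs $\fr{k}\cap\fr{n}$ nondegenerately with $\fr{k}\cap\bar{\fr{n}}$), so that $\wedge^{\rm{top}}\fr{o}=\wedge^{2s}\fr{o}$, the functor $\G_{K,T}$ has cohomological dimension $2s$, and both sides of the asserted isomorphism vanish outside $0\le i\le 2s$.

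The core step is the self-duality. First, $R(K)$ is self-dual as a $K\times K$-module by Peter--Weyl, so $R(K)\otimes V^\vee\simeq(R(K)\otimes V)^\vee$, and $(-)^\vee$ is an exact contravariant involution on admissible modules, commuting with restriction (that is, $(-)^\vee\circ\rm{Res}^K_T\simeq\rm{Res}^K_T\circ(-)^\vee$) and with tensoring by the finite-dimensional $T$-module $\wedge^{\rm{top}}\fr{o}$; if one prefers, everything can be checked $K$-type by $K$-type, where $(-)^\vee$ is honest vector-space duality. Next, for any $(\fr{k},T)$-module $W$, the pairing $\wedge^p\fr{o}^*\otimes\wedge^{2s-p}\fr{o}^*\to\wedge^{2s}\fr{o}^*$ identifies the $\CC$-linear dual complex of the Chevalley--Eilenberg complex $C^\bul(\fr{k},\fr{t};W)$ with $C^\bul(\fr{k},\fr{t};\,W^\vee\otimes\wedge^{2s}\fr{o})[2s]$; this is Poincar\'e duality for relative Lie algebra cohomology, resting on the unimodularity of $\fr{k}$, the transpose of the Chevalley--Eilenberg differential becoming, under the pairing, the Chevalley--Eilenberg differential of the twisted coefficient module. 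Applying this with $W=R(K)\otimes V^\vee$, so that $W^\vee\simeq R(K)\otimes V$ and the finite-dimensional twist may be moved inside $R(K)\otimes(-)$, identifies the $\vee$-dual of the complex computing $\rm{\bf R}\G_{K,T}(V^\vee)$ with the complex computing $\rm{\bf R}\G_{K,T}(\wedge^{\rm{top}}\fr{o}\otimes V)$, shifted by $2s$. Taking $H^i$ and unwinding the shift yields $\rm{\bf R}^i\G_{K,T}(V^\vee)^\vee\simeq\rm{\bf R}^{2s-i}\G_{K,T}(\wedge^{\rm{top}}\fr{o}\otimes V)$, which is the assertion after applying $\vee$ once more.

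The step I expect to be the main obstacle is checking that the isomorphism so produced is one of $(\fr{g},K)$-modules, not merely of cohomology spaces: the contragredient transposes the production $\fr{g}$-action $(\pi_\G(a)F)(k)=\pi_V(\phi(k)a)F(k)$ and the left-translation $K$-action of \S 3, the Poincar\'e pairing reverses the grading and carries along the Koszul sign conventions and the twist, and one must verify these assemble to exactly the production structure on $\G_{K,T}(\wedge^{\rm{top}}\fr{o}\otimes V)$. A cleaner way to organize this bookkeeping --- the one effectively used in \cite{hmsw} --- is to introduce the Bernstein coinduction functor $\Pi_{K,T}$, the left adjoint of $\rm{Res}^K_T$: the ``easy duality'' $(\rm{\bf L}_j\Pi_{K,T}V)^\vee\simeq\rm{\bf R}^j\G_{K,T}(V^\vee)$ follows formally from the two adjunctions together with $(-)^\vee\circ\rm{Res}^K_T\simeq\rm{Res}^K_T\circ(-)^\vee$, while the ``hard duality'' $\rm{\bf L}_j\Pi_{K,T}V\simeq\rm{\bf R}^{2s-j}\G_{K,T}(\wedge^{\rm{top}}\fr{o}\otimes V)$ is precisely the Koszul self-duality above, rephrased as the identification of the Bernstein and Zuckerman functors up to the twist $\wedge^{\rm{top}}\fr{o}$ and the shift by $2s$; composing the two gives the theorem. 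Finally, combining this with Lemma 3.1 (the $\rm{ind}/\rm{pro}$ contragredient identity stated just above) is what converts the modules of Theorem \ref{mainthm}.\ into contragredients of cohomologically induced modules.
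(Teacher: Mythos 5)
For this theorem the paper supplies no argument of its own: it simply points to \cite[Cor.\ 6.1.9]{hp} for the proof in the real-group case, so there is no internal proof to compare yours against. Your sketch is the standard route to the statement (with $T=L\cap K$): easy duality, which follows formally from the adjunctions $\Pi_{K,T}\dashv\rm{Res}^K_T\dashv\G_{K,T}$ together with $(-)^\vee\circ\rm{Res}^K_T\simeq\rm{Res}^K_T\circ(-)^\vee$, combined with hard duality, i.e.\ the Poincar\'e/Koszul self-duality of the standard complex identifying $\rm{\bf L}_j\Pi_{K,T}$ with $\rm{\bf R}^{2s-j}\G_{K,T}(\wedge^{\rm{top}}\fr{o}\otimes-)$; this is essentially the argument of the cited source and of \cite{kv}, so the approach is the right one and the organization via the Bernstein functor is indeed the cleanest way to control the $(\fr{g},K)$-module structure. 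The one step you should not pass over is the assertion that the Killing form forces $\dim\fr{o}=2s$ and, implicitly, that $\fr{k}=\fr{t}\oplus\fr{o}$, so that the relative Chevalley--Eilenberg complex is built from $\wedge^\bul\fr{o}$ and $\G_{K,T}$ has cohomological dimension $2s$: for an arbitrary parabolic $\fr{p}$ with $\fr{l}\subset\fr{p}$, even with a $\theta$-stable Levi, this can fail (for $\fr{sl}(2,\RR)$ and a Borel attached to the open $K$-orbit one has $\fr{o}=0$ while $\fr{k}/\fr{t}$ is one-dimensional), so either the decomposition $\fr{k}=\fr{t}\oplus\fr{o}$ with $\dim(\fr{k}\cap\fr{n})=\dim(\fr{k}\cap\bar{\fr{n}})$ must be taken as a standing hypothesis of the theorem (as it is automatically for $\theta$-stable parabolics, the setting of \cite{hp}), or you should prove the unconditional form of hard duality with $\wedge^{\rm{top}}(\fr{k}/\fr{t})$ and shift $\dim(\fr{k}/\fr{t})$ and then specialize; with that caveat your outline is correct.
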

See \cite[Cor.\ 6.1.9]{hp} for the proof of this theorem in the case of real groups.  
The one-dimensional $(\fr{l}, L\cap K)$-module $\wedge^\rm{top}\fr{o}$ is trivial for $\fr{l}$,
but may have a non-trivial action of the component group of $L\cap K$.  
Additionally, we have
\begin{thm}[\cite{mp}, Theorem 1.13]\label{redrest}
	Let $S\subset K$ be a subgroup and $T$ its Levi factor. 
	The Zuckerman functor $\rm{\bf R}\G_{K,S}$ is the restriction of $\rm{\bf R}\G_{K,T}$
	to $\rm{D}(\cl{M}(\fr{g},S))$. 
\end{thm}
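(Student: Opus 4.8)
The plan is to strip off the unipotent radical of $S$ and reduce the claim to a cohomological triviality. Write $S=T\ltimes U$ with $U$ the unipotent radical, so that $T$ is the Levi factor and $\dim(S/T)=\dim U$. The restriction functor $\rm{Res}_T^K\colon\cl{M}(\fr{g},K)\to\cl{M}(\fr{g},T)$ factors as $\rm{Res}_T^S\circ\rm{Res}_S^K$, where $\rm{Res}_T^S\colon\cl{M}(\fr{g},S)\to\cl{M}(\fr{g},T)$ is the forgetful functor along $T\hookrightarrow S$ and $\G_{S,T}$ denotes the Zuckerman functor for the pair $T\subset S$. All three restriction functors are exact and $\G_{K,S},\G_{K,T},\G_{S,T}$ are their respective right adjoints; each has finite cohomological dimension, so by the reduction principle the derived functors $\rm{\bf R}\G_{S,T}$, $\rm{\bf R}\G_{K,S}$, $\rm{\bf R}\G_{K,T}$ are defined on the bounded equivariant derived categories. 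A right adjoint of an exact functor preserves injectivity, so $\G_{S,T}$ carries $\cl{K}$-injectives to $\cl{K}$-injectives, and deriving the identity $\G_{K,T}=\G_{K,S}\circ\G_{S,T}$ yields $\rm{\bf R}\G_{K,T}\simeq\rm{\bf R}\G_{K,S}\circ\rm{\bf R}\G_{S,T}$. Consequently the theorem is equivalent to the statement that $\rm{\bf R}\G_{S,T}\circ\rm{Res}_T^S$ is naturally isomorphic to the identity on $\rm{D}(\cl{M}(\fr{g},S))$; equivalently, that the derived unit $V\to\rm{\bf R}\G_{S,T}(V|_T)$ is an isomorphism for every $(\fr{g},S)$-module $V$.

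To prove this last statement, fix a $(\fr{g},S)$-module $V$ and compute $\rm{\bf R}\G_{S,T}(V|_T)$ via the complex $\rm{Hom}^\bul\big(\cl{N}(\fr{s}),R(S)\otimes V\big)^{(\fr{s},T)}$ (the analogue for $T\subset S$ of the functor $\G^{\rm{equi}}_{K,T}$ of \S3), in which $\fr{s}$ acts diagonally on $R(S)\otimes V$ --- by left-invariant vector fields on $R(S)$ together with the restriction to $\fr{s}\subset\fr{g}$ of the $\fr{g}$-action on $V$ --- and $T$ acts by right translation on $R(S)$ and by $\nu_V$ on $V$. The key point, which is exactly where we use that $V$ is a genuine $(\fr{g},S)$-module rather than merely a $(\fr{g},T)$-module, is that this diagonal $\fr{s}$-action integrates to the given $S$-action on $R(S)\otimes V$; hence the standard untwisting isomorphism (the tensor identity) gives an $(\fr{s},T)$-equivariant isomorphism $R(S)\otimes V\simeq R(S)\otimes V_0$ under which both $\fr{s}$ and $T$ act only on the first factor and $V_0$ is $V$ regarded as a bare coefficient space. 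Therefore
\begin{equation*}
   \rm{\bf R}\G_{S,T}(V|_T)\;\simeq\;\rm{Hom}^\bul\big(\cl{N}(\fr{s}),R(S)\big)^{(\fr{s},T)}\otimes V\;=\;\rm{H}^\bul(\fr{s},T;R(S))\otimes V.
\end{equation*}

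It remains to show $\rm{H}^\bul(\fr{s},T;R(S))=\CC$, concentrated in degree $0$. Replacing $\cl{N}(\fr{s})$ by the relative standard complex $\cl{N}(\fr{s},\fr{t})$, this group is the $(\fr{t},T)$-invariant part of the complex $\big(R(S)\otimes\wedge^\bul\fr{u}^*,d\big)$ of $T$-invariant relative algebraic de Rham forms for the projection $S\to S/U=T$. Every fiber of that projection is a coset of the unipotent group $U\cong\bb{A}^{\dim U}$, whose algebraic de Rham cohomology is $\CC$ in degree $0$ by the algebraic Poincar\'e lemma; so the fiberwise cohomology is $R(T)$ in degree $0$, and its $(\fr{t},T)$-invariants under right translation are the constants. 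This gives $\rm{\bf R}\G_{S,T}(V|_T)\simeq V$, and unwinding the untwisting isomorphism shows that the resulting arrow is the unit, which proves the theorem.

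I expect the main obstacle to be the middle step, $\rm{\bf R}\G_{S,T}(V|_T)\simeq V$: one must carry the diagonal $\fr{s}$-action, the two commuting $T$-actions, and the homotopy $i$ faithfully through the untwisting isomorphism, and one must see that the vanishing $\rm{H}^{>0}(\fr{s},T;R(S))=0$ genuinely exploits that $T$ is the \emph{full} reductive part of $S$ --- it fails for $\fr{s}$ replaced by a reductive algebra, the residual $\fr{u}$-directions being precisely what the de Rham vanishing disposes of. By contrast, the formal composition of derived adjoints and the algebraic Poincar\'e lemma are routine.
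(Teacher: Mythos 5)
The paper does not actually prove this statement---it is imported wholesale from \cite{mp}---so there is no in-paper argument to measure you against; judged on its own terms, your proposal is correct and reconstructs what is essentially the standard mechanism behind the cited result. The formal frame is fine: $\rm{Res}_T^K=\rm{Res}_T^S\circ\rm{Res}_S^K$ gives $\G_{K,T}=\G_{K,S}\circ\G_{S,T}$, the right adjoint $\G_{S,T}$ of an exact functor preserves injectives, so everything reduces to showing the derived unit $V\to \rm{\bf R}\G_{S,T}(V|_T)$ is an isomorphism; your untwisting $F\mapsto(s\mapsto\nu_V(s)F(s))$, which genuinely uses that the $\fr{s}$-action on $V$ integrates to $S$, together with the algebraic Poincar\'e lemma for $U\simeq\bb{A}^{\dim U}$ and exactness of $T$-invariants ($T$ reductive), does give $\rm{H}^\bul(\fr{s},T;R(S))=\CC$ and hence the required vanishing. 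Two points should not be waved through. First, your computational starting point, $\rm{\bf R}^p\G_{S,T}(W)\simeq \rm{H}^p(\fr{s},T;R(S)\otimes W)$, is the comparison (\ref{equal}) applied to a pair whose ambient group $S$ is \emph{not} reductive; the exposition in \S 3 of this paper uses reductivity of the ambient group at least once (in identifying $\mathrm{Ind_h}$ with its tensor-product form), so you must either verify that Pand\v zi\'c's comparison is established in the generality of an arbitrary affine pair $(S,T)$ or prove directly that the modules $R(S)\otimes W$ with the $\lambda$-structure are $\G_{S,T}$-acyclic---true, but an input of real substance rather than bookkeeping. Second, the untwisting that purifies the auxiliary $(\fr{s},T)$-structure simultaneously twists the residual $(\fr{g},S)$-structure $(\pi_\G,\nu_\G)$, so identifying $\rm{H}^0$ with $V$ \emph{as a $(\fr{g},S)$-module}, and checking that the resulting isomorphism is the unit, requires the explicit verification you flag (on constant functions the twisted structure does collapse to $(\pi_V,\nu_V)$, so the check succeeds). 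With those two points supplied, the argument is sound, and your closing remark is exactly right: the vanishing $\rm{H}^{>0}(\fr{s},T;R(S))=0$ is precisely where the hypothesis that $T$ is the full Levi factor of $S$ is used.
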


For any $(\fr{p}_x, L_x\cap K)$-module $V$, let $V^\sim = V\otimes \wedge^\rm{top}\fr{n}_x$.
Then,  we have $(V^\#)^\vee = (V^\vee)^\sim$.  The $i$th cohomologically induced module of $V$ is 
defined to be 
\begin{equation*}
      \cl{R}^i(V)  =  \rm{\bf R}^i\G_{K,L_x\cap K}(\rm{pro}_{\bar{\fr{p}}_x, L_x\cap K}^{\fr{g},L_x\cap K}(V^\sim)).
\end{equation*}
Properties can be found in \cite{hp} or \cite{kv}.  As a consequence of Theorem \ref{mainthm}, we have the following corollary.  

\begin{cor}
	With the same hypotheses as Theorem \ref{mainthm}, let 
	$\fr{o} = \fr{k}\cap \fr{n}_x\oplus \fr{k}\cap \bar{\fr{n}}_x$.  Then
	\begin{equation*}
		\rm{\bf R}^{d_Q+i}_{K,S_x}(\rm{ind}_{\fr{p}_x,L_x\cap K}^{\fr{g},L_x\cap K}
		(T_x\tau^\#))^\vee\simeq 
		\cl{R}^{d_Q-i}((T_x\tau^\vee\otimes \wedge^{2d_Q}\fr{o}^\vee)^\sim).
	\end{equation*}
\end{cor}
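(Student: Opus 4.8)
The plan is to deduce the corollary from the Main Theorem by transporting its identification through the contragredient functor, using Theorem \ref{redrest}, Zuckerman duality, and \cite[Lemma 3.1]{hmsw}. First I would rewrite the left-hand side so that the equivariance group is the \emph{reductive} group $L_x\cap K$ rather than $S_x$. The remark opening this subsection identifies $\rm{ind}_{\fr{p}_x,S_x}^{\fr{g},S_x}(T_x\tau^\#)$ with $\rm{ind}_{\fr{p}_x,L_x\cap K}^{\fr{g},L_x\cap K}(T_x\tau^\#)$ as $(\fr{g},L_x\cap K)$-modules, and Theorem \ref{redrest}, applied with $T=L_x\cap K$ the Levi factor of $S_x$, identifies $\rm{\bf R}\G_{K,S_x}$ with the restriction of $\rm{\bf R}\G_{K,L_x\cap K}$. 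Hence the module on the left of the corollary is
\[
  \bigl[\rm{\bf R}^{d_Q+i}\G_{K,L_x\cap K}\bigl(\rm{ind}_{\fr{p}_x,L_x\cap K}^{\fr{g},L_x\cap K}(T_x\tau^\#)\bigr)\bigr]^{\vee}.
\]

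Next I would move the contragredient past $\rm{\bf R}\G$. This is exactly the content of Zuckerman duality, at the cost of reflecting the cohomological degree about $s=\dim(\fr{k}\cap\fr{n}_x)$, twisting by the one-dimensional module $\wedge^{\rm{top}}\fr{o}$, and --- combined with \cite[Lemma 3.1]{hmsw} --- interchanging $\rm{ind}$ out of $\fr{p}_x$ with $\rm{pro}$ out of $\bar{\fr{p}}_x$. Concretely, I would apply the duality theorem with the $(\fr{l}_x,L_x\cap K)$-module $V=T_x\tau^\#\otimes\wedge^{2d_Q}\fr{o}^{\vee}$, chosen so that $\wedge^{\rm{top}}\fr{o}\otimes V\cong T_x\tau^\#$; this uses that $\dim\fr{o}=2d_Q$ for the orbit $Q$ and that $\wedge^{\rm{top}}\fr{o}$ is isomorphic to its own contragredient, the Killing form identifying $\fr{k}\cap\bar{\fr{n}}_x$ with $(\fr{k}\cap\fr{n}_x)^{\vee}$ as $L_x\cap K$-modules. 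Together with $s=d_Q$, this yields an isomorphism of $(\fr{g},K)$-modules
\[
  \bigl[\rm{\bf R}^{d_Q+i}\G_{K,L_x\cap K}\bigl(\rm{ind}_{\fr{p}_x,L_x\cap K}^{\fr{g},L_x\cap K}(T_x\tau^\#)\bigr)\bigr]^{\vee}
  \;\simeq\;
  \rm{\bf R}^{\,d_Q-i}\G_{K,L_x\cap K}\bigl(\rm{pro}_{\bar{\fr{p}}_x,L_x\cap K}^{\fr{g},L_x\cap K}(V^{\vee})\bigr).
\]

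It then remains to recognize the right-hand side as $\cl{R}^{d_Q-i}$ of the module in the statement. By definition $\cl{R}^{d_Q-i}(W)=\rm{\bf R}^{d_Q-i}\G_{K,L_x\cap K}\bigl(\rm{pro}_{\bar{\fr{p}}_x,L_x\cap K}^{\fr{g},L_x\cap K}(W^{\sim})\bigr)$, so I need $V^{\vee}\cong W^{\sim}$ for $W=T_x\tau^{\vee}\otimes\wedge^{2d_Q}\fr{o}^{\vee}$. Using $(A\otimes B)^{\vee}=A^{\vee}\otimes B^{\vee}$ for the one-dimensional twist $B$, the recorded identity $(T_x\tau^\#)^{\vee}=(T_x\tau^{\vee})^{\sim}$, the definition $(-)^{\sim}=(-)\otimes\wedge^{\rm{top}}\fr{n}_x$, and once more the self-duality of $\wedge^{\rm{top}}\fr{o}$, one computes
\[
  V^{\vee}=(T_x\tau^\#)^{\vee}\otimes\wedge^{2d_Q}\fr{o}
  =(T_x\tau^{\vee})^{\sim}\otimes\wedge^{2d_Q}\fr{o}
  =\bigl(T_x\tau^{\vee}\otimes\wedge^{2d_Q}\fr{o}^{\vee}\bigr)^{\!\sim}
  =W^{\sim},
\]
which completes the argument.

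The step deserving the most care --- and the one I expect to be the real obstacle --- is the simultaneous matching of the cohomological degree and of the various one-dimensional twisting modules. One must verify the identities $s=d_Q$ and $\dim\fr{o}=2d_Q$ for the base point $x$ (chosen so that $\fr{l}_x$ is stable under the Cartan involution defining $K$, which is what makes $L_x\cap K$ a maximal reductive subgroup of $S_x$), check that $\wedge^{\rm{top}}\fr{o}\cong(\wedge^{\rm{top}}\fr{o})^{\vee}$ as $(\fr{l}_x,L_x\cap K)$-modules, and keep track of the twist $\wedge^{\rm{top}}\bar{\fr{n}}_x=T_x\omega_{X_\theta}^{-1}$ hidden inside $(-)^{\#}$ against the $\wedge^{\rm{top}}\fr{n}_x$ hidden inside $(-)^{\sim}$. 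Throughout, one must ensure that it is the $(\fr{g},K)$-module structure, and not merely the underlying vector space, that is carried along each isomorphism, which requires the admissibility needed for $(-)^{\vee}$ to be well behaved with respect to $\rm{ind}$, $\rm{pro}$, and $\rm{\bf R}\G$.
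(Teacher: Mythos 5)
Your argument is correct and follows essentially the same route as the paper: Zuckerman duality together with Lemma 3.1 of \cite{hmsw} to turn the dual of the induced module into a $\rm{pro}$-module from $\bar{\fr{p}}_x$, the identity $(V^\#)^\vee=(V^\vee)^\sim$ to match the twists, and Theorem \ref{redrest} to pass between $S_x$- and $L_x\cap K$-equivariance. The only differences are cosmetic --- you apply Theorem \ref{redrest} at the outset rather than at the end and use self-duality of $\wedge^{\rm{top}}\fr{o}$ in the twist bookkeeping where the paper keeps $\wedge^{2d_Q}\fr{o}$ undualized --- and your explicit flagging of $s=d_Q$, $\dim\fr{o}=2d_Q$ makes visible a point the paper leaves implicit.
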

\begin{proof}
	The duality results  yield the isomorphism
	\begin{equation*}
		\rm{\bf R}^{d_Q+i}_{K,S_x}(\rm{ind}_{\fr{p}_x,L_x\cap K}^{\fr{g},L_x\cap K}(T_x\tau^\#))^\vee
		\simeq \rm{\bf R}^{d_Q-i}\G_{K,S_x}(\rm{pro}_{\bar{\fr{p}}_x,L_x\cap K}^{\fr{g},L_x\cap K}
		((T_x\tau^\#\otimes\wedge^{2d_Q}\fr{o})^\vee)).
	\end{equation*} 
	The observation that $(V^\#)^\vee = (V^\vee)^\sim$ for any $(\fr{p}_x,L_x\cap K)$-module
	implies
	\begin{equation*}
		(T_x\tau^\#\otimes\wedge^{2d_Q}\fr{o})^\vee= 
		(T_x\tau^\vee \otimes\wedge^{2d_Q}\fr{o}^\vee)^\sim,
	\end{equation*}
	and Theorem \ref{redrest}.\ completes the proof.
\end{proof}

%There is a distinguished family of cohomologically induced modules
%\begin{equation*}
%	A^{\fr{q}}(\lambda) = \cl{R}^s(\CC_\lambda),
%\end{equation*}
%where $\CC_\lambda$ is the one-dimensional representation of $(\fr{l}_x,L_x\cap K)$ with
%the center of $\fr{l}_x$ acting by $\lambda$ and $s= \dim{\fr{n}_x\cap\fr{k}}.$  
%If $T_x\tau = \CC_\lambda\otimes \wedge^{2d_Q}\fr{o}^\vee$, then the main theorem
%produces modules of the form 
%\begin{equation*}
%	\cl{R}^{d_Q}(\CC_{-\lambda+2\rho_n}) =A^{\bar{\fr{p}}_x}(-\lambda+2\rho_n). 
%\end{equation*}

%In general, $T_x\tau = \xi\otimes \CC_\lambda$ for some $\lambda\in\fr{h}_\theta^*$ and
%irreducible representation $\xi$ of the component group of $L_x\cap K$.  Let 
%$V^\xi = \xi^\vee\otimes \wedge^{2d_Q}\fr{o}^\vee$.  Then, the main theorem produces
%modules of the form
%\begin{equation*}
%	\cl{R}^{d_Q-i}(V^\xi\otimes\CC_{-\lambda+2\rho_n}),
%\end{equation*}
% which by Proposition 11.47 of \cite{kv} is the underlying Harish-Chandra module for 
% the continuous series representation for $V^\xi\otimes\CC_{-\lambda+2\rho_n}$ when $i=d_Q$.  

%%%%%%%%%%%%%%%%%%%%%%%%%%%%%%%%%%%%%%%%%%

\bibliographystyle{amsplain}
\bibliography{biblist}

\end{document}